\RequirePackage{plautopatch}
\documentclass[11pt]{amsart}

\usepackage[a4paper,margin=28mm]{geometry}
\usepackage{amsmath,amssymb,amsthm,mathtools}
\usepackage{microtype,graphicx,scalerel,xcolor}
\usepackage[pdfusetitle]{hyperref}

\hypersetup{
    colorlinks=true,
}

\usepackage{tikz}
\usetikzlibrary{positioning}
\usetikzlibrary{svg.path}
\definecolor{orcid_color}{HTML}{A6CE39}

\hypersetup{pdfborder={0 0 0}} 
\DeclareRobustCommand{\orcidicon}{%
	\raisebox{.2mm}{\scalerel*{%
	\begin{tikzpicture}[xscale=1,yscale=-1,transform shape]
	\filldraw[color=orcid_color] svg {M256,128c0,70.7-57.3,128-128,128C57.3,256,0,198.7,0,128C0,57.3,57.3,0,128,0C198.7,0,256,57.3,256,128z};
	\filldraw[color=white] svg {M86.3,186.2H70.9V79.1h15.4v48.4V186.2z} svg {M108.9,79.1h41.6c39.6,0,57,28.3,57,53.6c0,27.5-21.5,53.6-56.8,53.6h-41.8V79.1z M124.3,172.4h24.5
		c34.9,0,42.9-26.5,42.9-39.7c0-21.5-13.7-39.7-43.7-39.7h-23.7V172.4z} svg {M88.7,56.8c0,5.5-4.5,10.1-10.1,10.1c-5.6,0-10.1-4.6-10.1-10.1c0-5.6,4.5-10.1,10.1-10.1
		C84.2,46.7,88.7,51.3,88.7,56.8z};
	\end{tikzpicture}}{|}}%
}
\newcommand{\orcid}[1]{\href{https://orcid.org/#1}{\orcidicon}}

\def\ol{\overline}
\def\F{\mathbb F}
\def\R{\mathbb R}
\def\E{\mathbb E}
\def\P{\mathbb P}
\newcommand{\one}{\mathbf 1}
\renewcommand{\AA}{\mathcal A}
\newcommand{\BB}{\mathcal B}
\newcommand{\CC}{\mathcal C}
\newcommand{\DD}{\mathcal D}
\newcommand{\FF}{\mathcal F}
\newcommand{\EE}{\mathcal E}
\newcommand{\GG}{\mathcal G}
\newcommand{\HH}{\mathcal H}
\newcommand{\JJ}{\mathcal J}
\newcommand{\MM}{\mathcal M}
\newcommand{\RR}{\mathcal R}
\newcommand{\UU}{\mathcal U}
\newcommand{\VV}{\mathcal V}
\newcommand{\qbinom}[2]{\genfrac{[}{]}{0pt}{}{#1}{#2}}

\DeclareMathOperator{\Hom}{Hom}
\DeclareMathOperator{\Ball}{B}

\newtheorem{theorem}{Theorem}
\newtheorem{conjecture}[theorem]{Conjecture}
\newtheorem{lemma}[theorem]{Lemma}
\newtheorem{proposition}[theorem]{Proposition}
\newtheorem{claim}[theorem]{Claim}
\newtheorem{corollary}[theorem]{Corollary}
\newtheorem{assertion}[theorem]{Assertion}

\title{A product theorem for $r$-cross intersecting families of subspaces}
\author[Toshihiro Shimizu]{Toshihiro Shimizu\,\orcid{0009-0009-1379-0148}}
\address{Fujitsu Research, Fujitsu Limited, Kawasaki 211-8588, Japan}
\email{t.shimizu\_2@fujitsu.com}
\author[Norihide Tokushige]{Norihide Tokushige\,\orcid{0000-0002-9487-7545}}
\address{College of Education, University of the Ryukyus, Nishihara  903-0213, Japan}
\email{hide@cs.u-ryukyu.ac.jp}
\urladdr{https://n-tokushige.github.io/x/}

\begin{document}
\begin{abstract}
Let $V$ be an $n$-dimensional vector space over a finite field of order $q$.
Let $r\geq 3$, $(r-1)n\geq rk$ and let $\mathcal F_1,\ldots,\mathcal F_r\subset 
\genfrac{[}{]}{0pt}{}{V}{k}$,
where $\genfrac{[}{]}{0pt}{}{V}{k}$ denotes the set of $k$-dimensional subspaces of $V$.
Suppose that
$F_1\cap\cdots\cap F_r\neq\{0\}$ holds for all
$F_i\in\mathcal F_i$, $1\leq i\leq r$. Then we show that
$\prod_{i=1}^r|\mathcal F_i|\leq\genfrac{[}{]}{0pt}{}{n-1}{k-1}$, 
provided $n-k$ is sufficiently large for fixed $q$ and $r$. 
Moreover, equality holds if and only if there is a common line $L$ such that every family 
$\mathcal F_i$ consists of all $k$-dimensional subspaces containing the line $L$.
One of the main tools of the proof is a junta theorem concerning intersecting linear maps
obtained by Ellis, Kindler, and Lifshitz.
\end{abstract}
\maketitle
\tableofcontents

\section{Introduction}
We study $r$-cross intersecting families of subspaces in a vector space over a finite field.
Our main result is an Erd\H{o}s--Ko--Rado type inequality for the product of the sizes of
these families. To this end, we introduce the width of a family, and then we show 
an inequality for the sum of the widths of the corresponding $r$-cross union families.
One of the main tools for this part is a recent junta theorem for linear maps 
obtained by Ellis, Kindler, and Lifshitz \cite{EKL}.

Let $q$ be a fixed prime power, and let $V_n$ denote an $n$-dimensional
vector space over $\F_q$. 
Let $\qbinom {V_n}k$ denote the set of all $k$-dimensional subspaces of $V_n$.
We say that a family $\FF\subset\qbinom{V_n}k$ is $r$-wise intersecting
if $F_1\cap\cdots\cap F_r\neq \{0\}$ for all $F_i\in\FF$, $1\leq i\leq r$. 
For a real number $x\geq k-1$, 
we define the Gaussian binomial coefficient $\qbinom xk$ by
\[
\qbinom xk:=\prod_{j=0}^{k-1} \,\frac{q^{x-j}-1}{q^{k-j}-1}. 
\]
It is continuous and strictly increasing on $[k-1,\infty)$.
Then we have $|\qbinom {V_n}k |=\qbinom nk$.
Chowdhury and Patk\'os \cite{CP} proved the following vector space version of 
the Erd\H{o}s--Ko--Rado theorem. (The classification of the extremal structures for the
case $r=2$ and $n=2k$ is due to Newman \cite{Newman}, see also
Godsil--Newman \cite{Godsil-Newman}.)
\begin{theorem}\label{thm:1}
Let $n,k,r$ be positive integers with $r\geq 2$ and $(r-1)n\geq rk$.
If $\FF\subset\qbinom{V_n}k$ is an $r$-wise intersecting family, then
\[
|\FF|\leq\qbinom{n-1}{k-1}. 
\]
Moreover, equality holds if and only if $\FF=\{F\in\qbinom{V_n}k:L\leq F\}$ for some line $L\in\qbinom{V_n}1$, or $r=2$, $n=2k$, and $\FF=\qbinom{H}k$ for some $H\in\qbinom{V_n}{n-1}$.
\end{theorem}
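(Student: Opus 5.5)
The plan is to reduce the $r$-wise case to the case $r=2$, which is the vector space Erd\H{o}s--Ko--Rado theorem of Hsieh and Frankl--Wilson for $n\geq 2k$, with Godsil--Newman and Newman for the characterization of equality. I take that case as known; the new content is $r\geq3$ under $(r-1)n\geq rk$.

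\textbf{Reduction step.} Given an $r$-wise intersecting family $\FF\subset\qbinom{V_n}k$, I would consider the family
\[
\GG=\{F_1\cap\cdots\cap F_{r-1}: F_i\in\FF\}.
\]
The idea is to project down to a pairwise condition in a smaller ambient setting. A cleaner route is the standard ``generic complement'' trick. If $\FF$ is not trivially intersecting, i.e.\ $\bigcap_{F\in\FF}F=\{0\}$, then one can choose $F_1,\dots,F_{r-1}\in\FF$ greedily so that each new member cuts the dimension of the running intersection as much as possible. The dimension count $(r-1)n\geq rk$ is exactly what makes $\dim(F_1\cap\cdots\cap F_{r-1})\geq k-(r-1)(n-k)$ small relative to the remaining freedom. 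I would then run an induction on $r$. Define $\FF'=\{F\cap F_0: F\in\FF\}$ for a suitably chosen $F_0\in\FF$, viewed inside $F_0\cong V_k$. It is $(r-1)$-wise intersecting but has varying dimension, which is awkward. So instead I would follow Frankl's original set argument: show that an $r$-wise intersecting family with $(r-1)n\geq rk$ is in particular $2$-wise intersecting, which gives $|\FF|\leq\qbinom{n-1}{k-1}$ directly whenever $n\geq 2k$. Since $(r-1)n\geq rk$ allows $n<2k$ for $r\geq3$, the substantive range is $rk/(r-1)\leq n<2k$.

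\textbf{Shadow/counting step for $n<2k$.} In this range I would use a Katona-type cyclic-averaging argument adapted to subspaces. Take a random ``spread-like'' cyclic structure: via a Singer cycle, identify $V_n$ with $\F_{q^n}$ and consider orbits of a $k$-space under the multiplicative group. The key lemma would be that among the $k$-spaces in a suitable symmetric configuration $\mathcal C$, any $r$-wise intersecting subfamily has density at most $\qbinom{n-1}{k-1}/\qbinom nk=(q^k-1)/(q^n-1)$. Averaging over the transitive group action then bounds $|\FF|$. Proving this configuration lemma requires the condition $(r-1)n\geq rk$: $r$ members whose codimensions sum to at least $n$ can be chosen with trivial common intersection, $\sum(n-\dim F_i)=r(n-k)\geq n$, which is equivalent to $(r-1)n\geq rk$. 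This is the natural place for the hypothesis to enter.

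\textbf{Equality.} I would analyse equality in the averaging. Every configuration must meet $\FF$ in an extremal (star-like) subfamily. Connectivity of the group action then forces a common line $L$. The exceptional hyperplane example arises only when $r=2$ and $n=2k$, where the configuration lemma has two types of extremizers. The main obstacle is the configuration lemma, in particular finding a subspace analogue of Katona's circle that is rich enough. If Singer orbits fail, I would fall back on the Hilton--Milner-type stability for pairwise intersecting families together with a direct argument that non-trivial $r$-wise intersecting families are much smaller.
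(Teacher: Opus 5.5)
\textbf{The proposal has a genuine gap: it proves only the easy range $n\geq 2k$.}

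\textbf{What works.} Taking $F_2=\cdots=F_r$ shows an $r$-wise intersecting family is pairwise intersecting. So for $n\geq 2k$ the bound $\qbinom{n-1}{k-1}$ follows from the vector-space EKR theorem. For $r\ge 3$, $n=2k$, $k\ge 2$ you should still note that $\qbinom Hk$ is not $3$-wise intersecting, which discards the second extremal family.

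\textbf{What is missing.} In the range $rk/(r-1)\le n<2k$, the only range where the theorem says more than EKR, any two $k$-subspaces of $V_n$ already meet. So the whole content sits in your ``configuration lemma'', and you neither specify the configuration $\mathcal C$ nor prove anything about it.

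\begin{itemize}
\item \emph{The averaging is not a reduction by itself.} The lemma for $\mathcal C=\qbinom{V_n}k$ is exactly the theorem. Averaging helps only once you exhibit a concrete, well-structured $\mathcal C$ on which the local statement can actually be proved.
\item \emph{Singer orbits give no evidence of being easier.} For $k=n-1$, allowed whenever $3\le n\le r$, the Singer group acts regularly on hyperplanes. The orbit is then all of $\qbinom{V_n}{k}$, and the configuration lemma is literally the theorem.
\item \emph{The codimension count does not bound anything.} The identity $r(n-k)\ge n\iff(r-1)n\ge rk$ explains why $r$ members \emph{can} have trivial intersection. It gives no upper bound on how many members of an orbit can be $r$-wise intersecting.
\end{itemize}

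\textbf{The equality case and the fallback are also unsupported.} The equality argument (``every configuration meets $\FF$ in a star; connectivity forces a common line'') presupposes a classification of extremal subfamilies of $\mathcal C$ that you do not have. The fallback via Hilton--Milner stability for pairwise intersecting families is vacuous when $n<2k$, since every family is then pairwise intersecting.

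\textbf{Comparison with the paper.} The paper does not prove Theorem~1; it quotes it from Chowdhury--Patk\'os~\cite{CP}. Their route is not a circle-type averaging. They prove a vector-space form of Lov\'asz's Kruskal--Katona theorem: $|\FF|=\qbinom xl$ implies $|\Delta_{l-1}(\FF)|\ge\qbinom x{l-1}$, the same inequality this paper uses in the reduction proof of Theorem~\ref{thm:5}. They then use it to extend Frankl's $r$-wise theorem to subspaces. Without such a shadow inequality, or a concrete configuration whose local extremal problem you can actually solve, your argument is a research plan whose central step is open.
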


We say that families
$\FF_1,\ldots,\FF_r\subset\qbinom{V_n}k$ are $r$-cross intersecting
if $F_1\cap\cdots\cap F_r\neq \{0\}$ for all $F_i\in\FF_i$, $1\leq i\leq r$. 
A natural conjecture is the following.
\begin{conjecture}\label{conj:2}
Let $n,k,r$ be positive integers with $r\geq 2$ and $(r-1)n\geq rk$.
If $\FF_1,\ldots,\FF_r\subset\qbinom{V_n}k$ are $r$-cross intersecting
families, then
\[
\prod_{i=1}^r|\FF_i|\leq\qbinom{n-1}{k-1}^r.
\]
Moreover, equality holds if and only if $\FF_1=\cdots=\FF_r=\{F\in\qbinom{V_n}k:L\leq F\}$ for some line $L\in\qbinom{V_n}1$, 
or $r=2$, $n=2k$, and $\FF_1=\FF_2=\qbinom{H}k$ for some $H\in\qbinom{V_n}{n-1}$.
\end{conjecture}
Indeed, the conjecture is true if $r=2$ even in the following stronger form.

\begin{theorem}[\cite{Tokushige}]\label{thm:3}
Let $k\geq t\geq 1$ and $n\geq 2k$. Suppose that two families 
$\AA,\BB\subset\qbinom{V_n}k$ satisfy $\dim(A\cap B)\geq t$ for all $A\in\AA$
and $B\in\BB$. Then
\[
|\AA||\BB|\leq\qbinom{n-t}{k-t}^2. 
\]
Moreover, equality holds if and only if 
$\AA=\BB=\{F\in\qbinom{V_n}k:T\leq F\}$ for some $T\in\qbinom{V_n}t$,
or $n=2k$, and $\AA=\BB=\qbinom{H}k$ for some $H\in\qbinom{V_n}{n-t}$.
\end{theorem}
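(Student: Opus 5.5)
The plan is a spectral argument in the Grassmann scheme on $X=\qbinom{V_n}k$, with $N=\qbinom nk$. Write $f=\one_\AA$, $g=\one_\BB$, $\alpha=|\AA|/N$, $\beta=|\BB|/N$. The aim is a real symmetric matrix $M$ in the Bose--Mesner algebra of the scheme with two properties. First, $M(A,B)=0$ whenever $\dim(A\cap B)\geq t$, i.e.\ $M$ is a linear combination of the relation matrices $A_j$ (with $A_j(F,F')=1$ iff $\dim(F\cap F')=j$) for $j<t$ only. Second, $M$ has constant row sums $d>0$, and every other eigenvalue satisfies $|\lambda|\le\mu$, where
\[
\frac{\mu}{d+\mu}=\qbinom{n-t}{k-t}\Big/\qbinom nk .
\]
For $t=1$ I take $M=A_0$, the $q$-Kneser matrix. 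Its eigenvalues on the eigenspaces $V_0,V_1,\dots,V_k$ are known explicitly, namely $\lambda_j=(-1)^jq^{\binom{k}{2}+\binom j2-kj}\qbinom{n-k-j}{k-j}$. For $n\ge 2k$ the largest nontrivial modulus is $|\lambda_1|$, and $|\lambda_1|/(d+|\lambda_1|)=\qbinom{n-1}{k-1}/\qbinom nk$ is checked directly. For general $t$, I would use the Frankl--Wilson type pseudo-adjacency matrix from the proof of the vector space $t$-intersecting EKR theorem: a combination $M=\sum_{j<t}c_jA_j$ whose eigenvalues $\lambda_1,\dots,\lambda_t$ all equal $-\mu$, while $|\lambda_j|\le\mu$ for $j>t$ when $n\ge 2k$. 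Constructing or verifying this matrix is the main obstacle. The inequalities on $|\lambda_j|$ for $j>t$ need careful estimates of $q$-Eberlein polynomials, and they are tight exactly at $n=2k$.

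Given $M$, the cross condition gives $f^{\mathsf T}Mg=0$. Decompose $f=\alpha\one+\sum_{j\ge1}f_j$ and $g=\beta\one+\sum_{j\ge1}g_j$ into eigenspace components. Then
\[
0=f^{\mathsf T}Mg=d\alpha\beta N+\sum_{j\ge1}\lambda_j\langle f_j,g_j\rangle
\ \ge\ d\alpha\beta N-\mu\sqrt{\textstyle\sum\|f_j\|^2\sum\|g_j\|^2},
\]
where the last step uses Cauchy--Schwarz. Since $\sum_{j\ge1}\|f_j\|^2=N(\alpha-\alpha^2)$ and similarly for $g$, this gives $d\alpha\beta\le\mu\sqrt{\alpha\beta(1-\alpha)(1-\beta)}$. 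Applying AM--GM in the form $\sqrt{(1-\alpha)(1-\beta)}\le 1-\sqrt{\alpha\beta}$ yields $d\sqrt{\alpha\beta}\le\mu(1-\sqrt{\alpha\beta})$. Hence $\sqrt{\alpha\beta}\le\mu/(d+\mu)$, which is exactly $|\AA||\BB|\le\qbinom{n-t}{k-t}^2$.

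For the equality case I trace back all the inequalities. Equality in AM--GM forces $\alpha=\beta$. Equality in the spectral step forces $f$ and $g$ to lie in the sum of $V_0$ and those eigenspaces whose eigenvalue equals $-\mu$, with $g_j$ a fixed negative multiple of $f_j$ there; combined with $\alpha=\beta$ this gives $f=g$, so $\AA=\BB$. For $n>2k$ the relevant eigenspaces are $V_1,\dots,V_t$. A $0/1$ vector in $V_0\oplus\cdots\oplus V_t$ with density $\qbinom{n-t}{k-t}/\qbinom nk$ must then be a $t$-intersecting family of maximum size. The uniqueness part of the Frankl--Wilson / Tanaka classification then shows it is a star $\{F: T\le F\}$. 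For $n=2k$ the space $V_{t+1}$ may also attain $\mu$, and this is where the dual families $\qbinom Hk$ with $\dim H=n-t$ appear. I would handle this case by the duality $F\mapsto F^\perp$, which maps $\qbinom{V_{2k}}k$ to itself and swaps the two extremal configurations.
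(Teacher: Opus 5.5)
The paper gives no proof of Theorem~\ref{thm:3}; it quotes the result from \cite{Tokushige}. The only related argument in the paper is Lemma~\ref{lem:16}, which is your spectral computation for $t=1$, $n=2k$, phrased for cross-union families.

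Your general reduction is correct. A symmetric $M$ in the Bose--Mesner algebra that vanishes on pairs with $\dim(A\cap B)\ge t$, has constant row sums $d$, and satisfies $|\lambda_j|\le\mu$ for all $j\ge1$ gives $\sqrt{|\AA||\BB|}\le N\mu/(d+\mu)$. For $t=1$ this proves the inequality completely. For the $q$-Kneser matrix, $|\theta_{j+1}|/|\theta_j|=q^{j-k}(q^{k-j}-1)/(q^{n-k-j}-1)<1$ when $n\ge2k$, and $|\theta_1|/(\theta_0+|\theta_1|)=(q^k-1)/(q^n-1)$. Your $\lambda_j$ has $q^{\binom k2}$ where \eqref{eq:8} has $q^{k^2}$; this is harmless because only ratios enter.

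For $t\ge2$ there is a genuine gap, and it is the substantive part of the theorem. You postulate $M=\sum_{j<t}c_jA_j$ with $\lambda_1=\cdots=\lambda_t=-\mu$ and $|\lambda_j|\le\mu$ for $j>t$. You then say that constructing or verifying this matrix is the main obstacle, so it is never done. Citing Frankl--Wilson does not close the gap. Their matrix serves the single-family Hoffman/Delsarte bound. There $f=g$, so every $\langle f_j,f_j\rangle\ge0$, and only the one-sided condition $\lambda_j\ge-\mu$ is used. In the cross setting $\langle f_j,g_j\rangle$ can have either sign. You therefore also need every positive eigenvalue with $j>t$ to be at most $\mu$, for all $n\ge2k$. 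That is a separate estimate on $q$-Eberlein polynomials which the one-family theory does not provide. Without it, the case $t\ge2$ is reduced to an unproved spectral statement.

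The equality analysis also needs repair.
\begin{itemize}
\item On a $(-\mu)$-eigenspace, equality forces $\langle f_j,g_j\rangle=\|f_j\|\|g_j\|$. So $g_j$ is a \emph{nonnegative} multiple of $f_j$, as at the end of the proof of Lemma~\ref{lem:16}. With a negative multiple you would get $f+g=2\alpha\one$, not $f=g$.
\item $f=g$ follows only if no nontrivial eigenvalue equals $+\mu$. On such an eigenspace equality forces $g_j=-f_j$, and you would only get $g=f-2\sum_{\lambda_j=\mu}f_j$. So strictness $\lambda_j<\mu$ must be part of the spectral claim you still have to prove. For $t=1$ this holds: $\theta_1=-\mu$ and $|\theta_j|<\mu$ for $j\ge2$.
\item Your account of $n=2k$ is not right. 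For $t=1$, $|\theta_2|<|\theta_1|$ strictly even at $n=2k$. In general, when $n=2k$ the map $F\mapsto F^\perp$ preserves $\dim(F\cap F')$. It is therefore an automorphism of the scheme fixing each eigenspace. Hence $\one_{\qbinom Hk}$ with $\dim H=2k-t$ lies in your $V_0\oplus\cdots\oplus V_t$, exactly like the indicator of a star.
\end{itemize}

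So neither a contribution from $V_{t+1}$ nor a duality step is needed. Once $\AA=\BB$ is established, $\AA$ is a $t$-intersecting family of size $\qbinom{n-t}{k-t}$. The classification of such families for $n\ge2k$ (\cite{Newman,Godsil-Newman} for $t=1$, \cite{Tanaka} in general) already contains both extremal configurations.
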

\noindent

We say that families $\FF_1,\ldots,\FF_r\subset\qbinom{V_n}k$ are $r$-cross $t$-intersecting if
$\dim(F_1\cap\cdots\cap F_r)\geq t$ for all $F_i\in\FF_i$, $1\leq i\leq r$ (we omit
$t$ if $t=1$). 
If one of the families is empty, then $\prod_{i=1}^r|\FF_i|=0$. Otherwise,
every pair of families is $2$-cross $t$-intersecting; hence Theorem~\ref{thm:3}
implies $\prod_{i=1}^r|\FF_i|=\left((|\FF_1||\FF_2|)(|\FF_2||\FF_3|)\cdots(|\FF_r||\FF_1|)\right)^{\frac12}\leq\qbinom{n-t}{k-t}^r$
provided $k\geq t\geq 1$ and $n\geq 2k$.
Thus the challenge in Conjecture~\ref{conj:2} is for the case $r\geq 3$ and $n<2k$.

In this paper, we verify the conjecture partially as follows.

\begin{theorem}\label{thm:4}
Let $r\geq 3$ and let $n-k$ be a sufficiently large positive integer 
depending on $q$ and $r$ only.
If $(r-1)n\geq rk$, and 
$\FF_1,\ldots,\FF_r\subset\qbinom{V_n}k$ are $r$-cross intersecting
families, then
\[
\prod_{i=1}^r|\FF_i|\leq\qbinom{n-1}{k-1}^r.
\]
Moreover, equality holds if and only if $\FF_1=\cdots=\FF_r=\{F\in\qbinom{V_n}k:L\leq F\}$ for some line $L\in\qbinom{V_n}1$.
\end{theorem}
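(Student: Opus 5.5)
Write $\ell:=n-k$. I would first pass to complements with respect to a fixed nondegenerate bilinear form, i.e.\ to orthogonal complements. Set $\GG_i:=\{F^\perp:F\in\FF_i\}\subset\qbinom{V_n}{\ell}$. The condition $F_1\cap\cdots\cap F_r\neq\{0\}$ becomes $G_1+\cdots+G_r\neq V_n$. So the families $\GG_i$ are ``$r$-cross non-spanning'', and the hypothesis $(r-1)n\ge rk$ says exactly $r\ell\ge n$, so the condition is not vacuous.

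The extremal configuration $\FF_i=\{F:L\le F\}$ corresponds to $\GG_i=\qbinom{H}{\ell}$ with $H=L^\perp$ a hyperplane. I then want to show two things. (a) If some $\GG_i$ is not ``close'' to a hyperplane family, then $\prod|\GG_i|$ is much smaller than $\qbinom{n-1}{\ell}^r$. (b) If every $\GG_i$ is close to $\qbinom{H_i}{\ell}$, then the $H_i$ must coincide and a stability argument forces equality.

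For (a) I would use the junta theorem of Ellis--Kindler--Lifshitz, as the abstract suggests. Identify an $\ell$-space with the image of a linear map $\F_q^\ell\to V_n$. A family of large measure, namely $|\GG_i|\ge c\,\qbinom{n-1}{\ell}$ (a $q^{-\ell}$-fraction of the whole), that is $r$-cross non-spanning with the others should be approximable by a junta, i.e.\ a family determined by containment in, or intersection with, a bounded number of subspaces. This is where $\ell$ large, depending on $q$ and $r$, is needed.

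Concretely, I would define a ``width''. The natural candidate is $w(\GG)$, the least codimension of a subspace $W$ such that a large portion of $\GG$ lies in $W$. The aim is an inequality $\sum_i w(\GG_i)\ge\ldots$ for the cross-union families, coming from the requirement that the sums $G_1+\cdots+G_r$ avoid being all of $V_n$. Roughly: if $\GG_i$ essentially lives in $W_i$ of codimension $c_i$, then $W_1+\cdots+W_r\neq V_n$. Otherwise random elements of the families would span $V_n$, because $r\ell\ge n$ leaves enough room. Families contained in a codimension-$c$ subspace have size about $q^{-c\ell}\qbinom{n}{\ell}$. Comparing the product of sizes against $\qbinom{n-1}{\ell}^r\approx q^{-r\ell}\qbinom n\ell^r$, the only way to reach the bound is to have each $c_i=1$ with a common hyperplane.

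Any loss from the junta approximation costs a factor $1-\Omega(1)$. I would also show that members outside the hyperplane are very costly. If $G\in\GG_1$ is not in $H$, then the $r$-cross condition forces $\GG_2,\dots,\GG_r$ to avoid a positive fraction of $\qbinom H\ell$. This is a standard counting of $(r-1)$-tuples in $H$ spanning a complement-compatible space with $G$, and the loss outweighs the gain. This step yields both the inequality and the uniqueness of the equality case.

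The main obstacle is step (a): turning the EKL junta theorem, which is stated for a single intersecting family of linear maps (or for cross versions with comparable measures), into a statement for $r$ families whose sizes may be very unbalanced. Here the product is controlled, not each size. My first attempt would be to handle unbalanced sizes by the reduction in the introduction. Any two of the families are $2$-cross intersecting since $n\ge k$, so Theorem~\ref{thm:3} bounds pairwise products once the families are nonempty; I would apply it after first intersecting the families down appropriately. This should force every $|\FF_i|$ to be at least a constant fraction of $\qbinom{n-1}{k-1}$ whenever the product is near-extremal, putting all families in the regime where the junta theorem applies.
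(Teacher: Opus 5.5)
Your passage to orthogonal complements matches the paper, and your step (b) is close in spirit to Lemma~\ref{lem:22} and Claim~\ref{claim:34}. There, a member outside $H$ is shown to be outright impossible once the other families have $q^{-O(1)}$-dense parts inside $H$. The gap is step (a), and two of the tools you propose for it fail.

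First, Theorem~\ref{thm:3} needs $n\ge 2k$. In the only open regime $n<2k$, any two $k$-spaces meet, so pairwise cross intersection is vacuous and bounds nothing. The paper obtains balanced sizes only when the dimensions fit exactly ($r=3$, $n=3\ell$). It fixes $C$ in one family, chosen via the $q$-Kneser eigenvalue bound (Lemma~\ref{lem:14}) so that the other two families keep a constant fraction of members complementary to $C$. It then passes to $V/C$, of dimension exactly $2\ell$, where those two families become cross-intersecting $\ell$-spaces and Theorem~\ref{thm:3} applies (Claim~\ref{claim:26}).

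Second, viewing an $\ell$-space as the image of a map $\F_q^\ell\to V_n$ does not turn cross-union into the EKL condition $\ker(M-N)\neq\{0\}$, which is a condition on pairs of maps from two families. The right encoding is again in the quotient. Write $V/C=\ol A_0\oplus\ol B_0$; the $\ell$-spaces complementary to $\ol A_0$ are the graphs $\Gamma(M)$ with $M\in\Hom(\ol B_0,\ol A_0)$. Then $\Gamma(M_1)\cap\Gamma(M_2)\neq 0$ if and only if $\ker(M_1-M_2)\neq 0$, and $\ol A_0$ is chosen by Lemma~\ref{lem:15} so that the densities survive the restriction.

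More fundamentally, the principle behind your codimension width is false: cross-union families need not concentrate in subspaces $W_i$ with $W_1+\cdots+W_r\neq V_n$. Take $n=r\ell$, $\GG_1=\GG_2=\{G:v\in G\}$ and $\GG_3=\cdots=\GG_r=\qbinom{V_n}{\ell}$. These are $r$-cross union because $\dim(G_1+G_2)\le 2\ell-1$, yet none of them has a large portion in a proper subspace. Such ``through a point'' structures are exactly the value cells $\{M:Mu=v\}$ that the junta theorem produces alongside the hyperplane cells $\{M:\phi\circ M=\psi\}$ (Proposition~\ref{prop:20}, Claim~\ref{claim:41}).

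No single $C$ can exclude value cells, since in $V/C$ both stars and hyperplane families are extremal for Theorem~\ref{thm:3} with $n=2k$. The paper rules them out globally (Claim~\ref{claim:30}). If every good $C$ gave a value cell, the corresponding $(\ell+1)$-spaces $L_C\supset C$ would each meet a $q^{-O(1)}$-fraction of $\FF_1$. Lemma~\ref{lem:21} allows only $q^{O(\ell)}$ such spaces, each containing at most $q^{\ell+O(1)}$ of the $C$'s, but there are $q^{2\ell^2-\ell-O(1)}$ good $C$.

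Finally, you treat all $n\le r\ell$ and all $r$ at once, while the quotient argument needs exact dimensions. The paper first reduces to $n=r\ell$ by lifting to $V_{n+1}$ (Lemma~\ref{lem:12}); there the $q$-Kruskal--Katona theorem makes the size-based width $\|\FF\|_\ell$, defined by $|\FF|=\qbinom{x}{\ell}$, grow by at least $1$. It then reduces to $r=3$ by quotienting by a member of one family (Lemma~\ref{lem:17}), and recovers the product bound from the width sum by AM--GM (Claim~\ref{claim:9}).
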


We say that $r$ families $\FF_1,\ldots,\FF_r\subset\qbinom{V_n}l$ of subspaces are $r$-cross union if $F_1+F_2+\cdots+F_r\neq V_n$ for all 
$F_i\in\FF_i$ with $1\leq i\leq r$.
Fix a nondegenerate bilinear form on $V_n$, e.g., the standard dot product.
For $\FF\subset\qbinom{V_n}l$ we define its orthogonal-complement family 
by $\FF^\perp:=\{F^\perp:F\in\FF\}\subset\qbinom{V_n}{n-l}$.
Since $(F_1+\cdots+ F_r)^\perp=F_1^\perp\cap\cdots\cap F_r^\perp$ for all $F_i\in\FF_i$, it follows that 
$\FF_1,\ldots,\FF_r\subset\qbinom{V_n}l$ are $r$-cross union if and only if
$\FF_1^\perp,\ldots,\FF_r^\perp\subset\qbinom{V_n}{n-l}$ are $r$-cross intersecting.
(This property will be used to check that Theorem~\ref{thm:4} and 
Theorem~\ref{thm:6} are equivalent.)

For a non-empty family $\FF\subset\qbinom{V_n}l$, we have
$1\leq|\FF|\leq\qbinom nl$.
In this case, we define its width, denoted by $\|\FF\|_l$,
as a real number $x\in[l,n]$ satisfying $|\FF|=\qbinom xl$.
This is well-defined because $x\mapsto\qbinom xl$ is continuous and
strictly increasing on $[l,n]$.
For the empty case, we define $\|\emptyset\|_l=0$.
Notice that if $W\leq V_n$ is a subspace with $\dim W\geq l$
and $\FF=\qbinom{W}l$, then $\|\FF\|_l=\dim W$. 

Our main result is the following.

\begin{theorem}\label{thm:5}
Let $r\geq 3$ and let $l\geq r$ be a sufficiently large positive integer 
depending on $q$ and $r$ only.
If $l+1\leq n\leq rl$, and $\FF_1,\FF_2,\ldots,\FF_r\subset\qbinom{V_n}l$
are $r$-cross union families, then
\[
  \sum_{i=1}^r\|\FF_i\|_l\leq r(n-1).
\]
Moreover, equality holds if and only if 
$\FF_1=\FF_2=\cdots=\FF_r=\qbinom Hl$ for some hyperplane $H\in\qbinom{V_n}{n-1}$.
\end{theorem}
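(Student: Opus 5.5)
We would prove Theorem~\ref{thm:5} by a stability argument: first use the junta theorem of Ellis--Kindler--Lifshitz (for intersecting families of linear maps, equivalently for subspaces in general position) to show that large cross-union families are close to subfamilies of $\qbinom{H}{l}$ for a hyperplane $H$, and then finish with a direct counting argument near the extremal configuration.

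\emph{Step 1: Reduction to large families.} Write $x_i=\|\FF_i\|_l$. If some $\FF_i$ is empty, then $\sum_i x_i\le (r-1)n<r(n-1)$, since $n\le rl$ and $n\ge l+1>r$. So assume all $\FF_i$ are nonempty. If some $x_i\le n-1-\epsilon$, with $\epsilon$ a small absolute constant fixed later, we aim to show that the other widths cannot compensate. Note that $\qbinom{n-c}{l}/\qbinom nl\approx q^{-cl}$, because $n-l\ll$ is not assumed but $l$ is large. We will work multiplicatively: the condition $\sum x_i\ge r(n-1)$ means $\prod|\FF_i|\gtrsim \qbinom{n-1}{l}^r$ up to factors of size $q^{O(l)}$. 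Passing to orthogonal complements, we get $r$-cross intersecting families of $(n-l)$-spaces. We then apply the Ellis--Kindler--Lifshitz junta theorem, transported from the linear-map setting by restricting to subspaces complementary to a fixed $l$-space. This should yield, for each $i$, a bounded-dimensional subspace $J$ (of dimension depending only on $q,r$) such that every $\FF_i$, up to a $q^{-\Omega(l)}$ fraction, is determined by its trace on $J$. The cross-union property of the traces then forces them to lie in a common hyperplane, or else one family is tiny.

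\emph{Step 2: Structure.} Once the juntas are found, each $\FF_i$ splits as $\FF_i'\cup\FF_i''$, where $\FF_i'\subset\qbinom{H}{l}$ for a common hyperplane $H$ and $\FF_i''$ consists of the $l$-spaces not contained in $H$. For each $G\in\FF_j''$, the cross-union condition removes from the other families many members of $\qbinom Hl$: every tuple whose sum together with $G$ spans $V$ is excluded. Since $G\not\le H$, the sum spans $V$ whenever the members of the other families span $H$, which happens for a $1-q^{-\Omega(l)}$ fraction of tuples in $\qbinom Hl^{r-1}$. (Here $n\le rl$ matters: it is what makes generic $(r-1)$-tuples plus $G$ able to cover $H$.) We then compare the gain $|\FF_j''|$ with the loss, using the convexity and log-concavity of $x\mapsto\log\qbinom xl$. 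This should give $\sum x_i\le r(n-1)$, with equality only when every $\FF_i''=\emptyset$ and every $\FF_i'=\qbinom Hl$.

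\emph{Main obstacle.} The hardest step is the transfer of the junta theorem and the quantitative bookkeeping when $n-l$ is small relative to $l$, in particular near $n=l+1$, where widths close to $n-1$ correspond to tiny families. In that regime I would handle small $n-l$ separately. Dualizing gives families of $(n-l)$-dimensional spaces with $r$-wise nontrivial intersection. There the bound $\prod\le\qbinom{n-1}{n-l-1}^r$ follows for $(n-l)(r-1)\ge\ldots$ by a Kruskal--Katona/shadow-type argument on the sum of the widths. The identity $\qbinom{n-1}{l}=\qbinom{n-1}{n-l-1}$ converts between the two formulations; convexity is what converts the product bound into the width-sum bound.
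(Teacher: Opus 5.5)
Your outline has a genuine gap exactly where the junta theorem is supposed to enter. Lemma~\ref{lem:18} concerns a \emph{pairwise} intersecting family in $\Hom(X,Y)$ with $\dim X=\dim Y$. In subspace language this means $l$-spaces of a $2l$-dimensional space, written as graphs over a fixed complement. Your families satisfy only an $r$-wise condition, and in the hard range $n>2l$ the pairwise condition it implies is vacuous, since $\dim(F_1+F_2)\le 2l<n$. Moreover, subspaces of $V_n$ complementary to a fixed subspace are graphs of maps between spaces of dimensions $l$ and $n-l$, and these agree only when $n=2l$.

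So ``transporting EKL'' requires a reduction to a genuinely $2$-wise problem in dimension $2l$, and that reduction is the heart of the paper. It has three layers.
\begin{enumerate}
\item Reduce to $n=rl$ by induction on $rl-n$. Lift $\FF_i$ to $\HH_i=\{H\in\qbinom{V_{n+1}}{l}:p(H)\le F\text{ for some }F\in\FF_i\}$ and use the $q$-Kruskal--Katona theorem to get $\|\HH_i\|_l\ge\|\FF_i\|_l+1$ (Lemma~\ref{lem:12}). This also makes your separate treatment of small $n-l$ unnecessary.
\item Induct on $r$ by passing to $V/D$ for a typical $D\in\FF_m$ (Lemma~\ref{lem:17}).
\item For $r=3$, $n=3l$, pick a typical $C\in\FF_3$ via the $q$-Kneser eigenvalue bound (Lemma~\ref{lem:14}), so that $\FF_1(C),\FF_2(C)$ are large. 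Their images in $V/C\cong\F_q^{2l}$ are cross-intersecting, and Theorem~\ref{thm:3} pins every density at $q^{-l+O(1)}$. Only then, writing the $l$-spaces meeting $\ol A_0$ trivially as graphs $\Gamma(M)$ with $M\in\Hom(\ol B_0,\ol A_0)$, does EKL apply. It enters through Proposition~\ref{prop:20}, which itself needs the embeddings $\iota_0,\iota_1$ and random sampling, because EKL handles one unweighted family, not two weighted ones.
\end{enumerate}

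Even in that setting, the junta output does not force a hyperplane, contrary to your claim that cross-union of the traces does so. Proposition~\ref{prop:20} gives one of two alternatives.
\begin{itemize}
\item A dual cell $\{M:\phi\circ M=\psi\}$, which corresponds to a hyperplane $H_C\supset C$.
\item A value cell $\{M:Mu=v\}$, which corresponds to concentration on members meeting the $(l+1)$-space $L_C=\pi_C^{-1}(\ol L_C)$. In $V/C$ this is the star of a line, and that configuration is genuinely extremal for two families in dimension $2l$ (Theorem~\ref{thm:7}).
\end{itemize}
The paper excludes the value alternative only by varying $C$ over $\GG$ and counting. By Lemma~\ref{lem:21}, at most $q^{O(l)}$ spaces $L_C$ can each meet a $q^{-O(1)}$ fraction of $\FF_1$. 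This forces $|\GG|\le q^{O(l)}$, a contradiction.

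Your Step~2 also lacks a substitute for Lemma~\ref{lem:22}/Claim~\ref{claim:34}: excluded \emph{tuples} must be turned into a loss in \emph{widths}. In addition, your ``$1-q^{-\Omega(l)}$ fraction'' is false at $n=rl$. There the $r-1$ members and $G\cap H$ have dimensions summing exactly to $\dim H=rl-1$, so only a constant fraction of tuples spans. The paper needs no gain/loss comparison at all. Large subfamilies inside $H_C$ plus any single member outside $H_C$ already give a spanning tuple, so every family lies in $H_C$.

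Finally, your fallback for small $n-l$ runs backwards. By concavity of $x\mapsto\log\qbinom xl$ (Claim~\ref{claim:9}), the width-sum bound implies the product bound. A product bound obtained from a shadow argument does not give $\sum_i\|\FF_i\|_l\le r(n-1)$.
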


If one of the families is empty, then $\sum_{i=1}^r\|\FF_i\|_l\leq (r-1)n< r(n-1)$
because $r\leq l<n$. Thus, Theorem~\ref{thm:5} is trivial if 
one of the families is empty. (The same applies to Theorem~\ref{thm:7} below.) 

By Theorem~\ref{thm:5} with the inequality of arithmetic and geometric means, 
the following result easily follows as we will see in the next section.

\begin{theorem}\label{thm:6}
Let $r\geq 3$ and let $l\geq r$ be a sufficiently large positive integer depending on
$q$ and $r$ only.
If $l+1\leq n\leq rl$ and $\FF_1,\FF_2,\ldots,\FF_r\subset\qbinom{V_n}l$ are $r$-cross union
families, then 
\[
\prod_{i=1}^r|\FF_i|\leq\qbinom{n-1}{l}^r. 
\]
Moreover, equality holds if and only if 
$\FF_1=\FF_2=\cdots=\FF_r=\qbinom Hl$ for some hyperplane $H\in\qbinom{V_n}{n-1}$.
\end{theorem}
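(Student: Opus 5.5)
We derive Theorem~\ref{thm:6} from Theorem~\ref{thm:5} by converting widths into sizes with a log-concavity (Jensen-type) argument. If some $\FF_i$ is empty the product is $0$ and there is nothing to show, so assume all families are non-empty. Set $x_i:=\|\FF_i\|_l\in[l,n]$, so that $|\FF_i|=\qbinom{x_i}{l}$. By Theorem~\ref{thm:5}, $\sum_i x_i\le r(n-1)$; write $\bar x:=\frac1r\sum_i x_i\le n-1$.

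The key step is to show that the function
\[
f(x):=\log\qbinom{x}{l}=\sum_{j=0}^{l-1}\Bigl(\log(q^{x-j}-1)-\log(q^{l-j}-1)\Bigr)
\]
is concave on $[l,\infty)$ (in fact on $(l-1,\infty)$). Each summand has the form $g(y)=\log(q^{y}-1)$ with $y=x-j>0$. Writing $u=q^y>1$, we have $g'(y)=\ln q\cdot u/(u-1)$. Since $u/(u-1)$ is decreasing in $u$ and $u$ is increasing in $y$, $g'$ is strictly decreasing. Hence $g$ is strictly concave, and so is $f$.

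By Jensen's inequality (this is the AM--GM step mentioned before the statement),
\[
\frac1r\sum_{i=1}^r f(x_i)\le f(\bar x)\le f(n-1),
\]
where the second inequality uses that $f$ is increasing and $\bar x\le n-1$. Exponentiating gives $\prod_i|\FF_i|\le\qbinom{n-1}{l}^r$.

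For equality, both inequalities must be tight. By strict concavity, tightness in Jensen forces $x_1=\cdots=x_r$. Tightness in the monotonicity step forces $\bar x=n-1$, so $\sum_i x_i=r(n-1)$. The equality case of Theorem~\ref{thm:5} then gives $\FF_1=\cdots=\FF_r=\qbinom Hl$ for a hyperplane $H$. Conversely, these families are $r$-cross union, since every sum $F_1+\cdots+F_r$ lies in $H\neq V_n$, and they attain the bound.

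The only point needing care is the strict concavity of $f$; once that holds, the rest is routine. We require $l\ge r$ and the same "sufficiently large" hypothesis so that Theorem~\ref{thm:5} applies verbatim.
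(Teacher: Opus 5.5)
Your proof is correct and follows the paper's route. You reduce to Theorem~\ref{thm:5} via $x_i=\|\FF_i\|_l$, show $\prod_i\qbinom{x_i}{l}\le\qbinom{\bar x}{l}^r$, and then use strict monotonicity and the equality case of Theorem~\ref{thm:5}. The only difference is how the middle inequality is proved: you use strict concavity of $y\mapsto\log(q^y-1)$ and Jensen, whereas the paper (Claims~\ref{claim:8}--\ref{claim:9}) expands $(b_1+1)\cdots(b_r+1)$ and applies AM--GM to each elementary symmetric sum, which encodes the same convexity fact.
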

Theorem~\ref{thm:6} is equivalent to Theorem~\ref{thm:4}. 
To see this, suppose that $l=n-k$. Then $n\leq rl$ is equivalent to $(r-1)n\geq rk$,
and $\qbinom{n-1}l=\qbinom{n-1}{k-1}$.

As for the case $r=2$, we have the following result. Note that there is an
additional extremal configuration only for the case $r=2$ and $n=2l$.

\begin{theorem}\label{thm:7}
Let $l\geq 2$ be an integer. 
If $l+1\leq n\leq 2l$, and $\FF_1,\FF_2\subset\qbinom{V_n}l$
are $2$-cross union families, then
\[
\|\FF_1\|_l+\|\FF_2\|_l\leq 2(n-1).
\]
Moreover, equality holds if and only if 
$\FF_1=\FF_2=\qbinom Hl$ for some hyperplane $H\in\qbinom{V_n}{n-1}$,
or $n=2l$ and $\FF_1=\FF_2=\{F\in\qbinom{V_{2l}}l:L\leq F\}$ 
for some line $L\in\qbinom{V_{2l}}{1}$.
\end{theorem}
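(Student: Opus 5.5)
The plan is to combine Theorem~\ref{thm:3} (for $t=1$) with orthogonal complements, and then turn its product bound into a bound on the sum of widths. That last conversion is the real difficulty.

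\textbf{Step 1 (reduction).} Put $k=n-l$, so that $n\geq 2k$ is equivalent to $n\leq 2l$, and $k\geq 1$. Since $\FF_1,\FF_2$ are $2$-cross union, the families $\FF_1^\perp,\FF_2^\perp\subset\qbinom{V_n}k$ are $2$-cross intersecting. Theorem~\ref{thm:3} then gives
\[
|\FF_1||\FF_2|\leq\qbinom{n-1}{k-1}^2=\qbinom{n-1}{l}^2.
\]
If either family is empty we are done, as remarked after Theorem~\ref{thm:5}. Otherwise write $x_i=\|\FF_i\|_l\in[l,n]$ and assume $x_1\geq x_2$.

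\textbf{Step 2 (why the product bound is not enough).} The function $f(x)=\log\qbinom xl$ is a sum of terms $\log(q^{x-j}-1)$ up to constants, and each such term is concave. So $f$ is concave, and AM--GM goes the wrong way: $f(x_1)+f(x_2)\leq 2f(n-1)$ does not by itself force $x_1+x_2\leq 2(n-1)$.

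What does follow directly is this. If $x_1\leq n-1$, then trivially $x_1+x_2\leq 2(n-1)$. Equality here forces $x_1=x_2=n-1$, hence equality in Theorem~\ref{thm:3}. Its classification then gives either $\FF_1^\perp=\FF_2^\perp$ equal to a star of a line, which dualizes to $\FF_i=\qbinom Hl$, or, when $n=2k$ (that is, $n=2l$), $\FF_1^\perp=\FF_2^\perp=\qbinom{H}{k}$. Dualizing the latter gives all $l$-spaces containing the line $H^\perp$. These are exactly the configurations in Theorem~\ref{thm:7}.

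So it remains to treat $x_1\in(n-1,n]$.

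\textbf{Step 3 (trade-off lemma, the main obstacle).} I would prove that if $x_1\in(n-1,n]$, then $x_2<2n-2-x_1$, so that the sum is strictly less than $2(n-1)$. The idea is that $\FF_2$ must avoid the set
\[
\DD(\FF_1)=\{G\in\qbinom{V_n}l: G+F=V_n\text{ for some }F\in\FF_1\}.
\]
I would bound $|\DD(\FF_1)|$ from below by a Lov\'asz-type vector-space Kruskal--Katona inequality in width form: a family of width $x$ has a shadow or shade of width at least $x-1$. Concretely:
\begin{itemize}
\item Since $|\FF_1|>\qbinom{n-1}l$, the family $\FF_1$ is not contained in any hyperplane.
\item I would pass to the family of hyperplanes, or of codimension-type spaces, that contain no member of $\FF_1$ wholly.
\item Counting the $G$ that are contained in none of the hyperplanes which contain some member of $\FF_1$ should give $|\FF_2|\leq\qbinom{n}{l}-|\DD(\FF_1)|$. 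This quantity is far smaller than $\qbinom{2n-2-x_1}{l}$ whenever $x_1>n-1$, because $\qbinom nl-\qbinom{x_1}l$ shrinks only at the rate $q^{l}$ per unit of width, while $\qbinom{2n-2-x_1}l$ drops more slowly.
\end{itemize}

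The delicate point is the regime where $x_1$ is only slightly above $n-1$, where $\FF_1$ is nearly $\qbinom Hl$. There I would first try a stability argument. Writing $\FF_1=\FF_1'\cup\{F\not\leq H\}$, each $F\in\FF_1$ with $F\not\leq H$ kills a large portion of $\qbinom Hl$, namely all $G\leq H$ with $G+F=V_n$. This should show that $\FF_2$ loses many more members than $\FF_1$ gains, which is again a strict inequality. Verifying that this loss outweighs the gain uniformly over $l\geq 2$ and $l+1\leq n\leq 2l$ is the step I expect to require the most care.
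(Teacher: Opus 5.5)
Your Steps~1 and~2 are correct. Dualizing to Theorem~\ref{thm:3} is legitimate, the concavity remark is right, and the case $x_1\le n-1$ (including its equality analysis) is complete.

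The gap is Step~3. It carries the entire content of the theorem and is not proved. It is also not a reduction: for a given $\FF_1$, the largest admissible partner is $\qbinom{V_n}l\setminus\DD(\FF_1)$. So the bound $\qbinom nl-|\DD(\FF_1)|\le\qbinom{2n-2-x_1}l$ that you need is just a restatement of the inequality you are trying to prove. None of the tools you name gives this bound:
\begin{itemize}
\item $\DD(\FF_1)$ is not the shadow or shade of any family naturally attached to $\FF_1$. Here $G+F=V_n$ means $\dim(G\cap F)=2l-n$, and unlike the set case there is no canonical complement. So Kruskal--Katona does not apply.
\item Your third bullet bounds $|\DD(\FF_1)|$ only by the number of $G$ lying in no hyperplane that contains a member of $\FF_1$. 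This number is typically $0$. If $\FF_1\supseteq\qbinom Hl$ (exactly your delicate regime) and $n\ge l+2$, then every hyperplane $H'$ has $\dim(H\cap H')\ge n-2\ge l$, hence contains a member of $\FF_1$.
\item There is no slack to exploit. For $\FF_1=\qbinom Hl$ one has $\qbinom nl-|\DD(\FF_1)|=\qbinom{n-1}l$ exactly. So near $x_1=n-1$ you need a sharp first-order trade-off between $|\FF_1|$ and $|\FF_2|$, and the stability sketch does not provide one.
\end{itemize}

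The missing idea is such a sharp trade-off. The paper obtains it only at $n=2l$, from the spectrum of the $q$-Kneser graph $K_q(2l,l)$. Lemma~\ref{lem:16} gives $q^{2l}|\FF_1||\FF_2|\le(N-|\FF_1|)(N-|\FF_2|)$ with $N=\qbinom{2l}l$. Equivalently, $\phi(u)\phi(v)\le1$ where $\phi(x)=q^l\qbinom xl/(N-\qbinom xl)$. This is strictly stronger than the product bound of Theorem~\ref{thm:3} you start from. Claims~\ref{claim:37} and~\ref{claim:38} then show, by a careful product comparison, that $\phi(u)\phi(v)\le1$ forces $u+v\le2(2l-1)$, with equality only at $u=v=2l-1$. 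In that case the equality part of Lemma~\ref{lem:16} gives $\FF_1=\FF_2$, and Corollary~\ref{cor:35} (Newman) classifies the extremal families.

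The range $n<2l$ is handled by induction on $2l-n$ via Lemma~\ref{lem:12}. That lemma lifts the pair to $V_{n+1}$ while raising each width by at least $1$. This is where a vector-space Kruskal--Katona theorem genuinely enters, through the shadow $\Delta_{l-1}(\FF_i)$ rather than through $\DD(\FF_1)$. The equality case then also requires excluding the line-star configuration when $n+1=2l$.
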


Finally we mention some related results. In this paper we only consider the case
where all families have the same uniformity (the same subspace dimension).
One can also consider the case where families have different uniformities.
The first result in this direction was obtained by Suda and Tanaka \cite{Suda-Tanaka}. They proved that if $n\geq 2k_1\geq 2k_2$, and $\FF_1\subset\qbinom{V_n}{k_1}$
and $\FF_2\subset\qbinom{V_n}{k_2}$ are 2-cross intersecting, then
$|\FF_1||\FF_2|\leq\qbinom{n-1}{k_1-1}\qbinom{n-1}{k_2-1}$.
Recently, Cao et al.\ \cite{Cao} proved that if $r\geq 2$, 
$k_1\geq k_2\geq\cdots\geq k_r\geq t\geq 1$,
$n\geq k_1+k_2+t+1$, and $\FF_i\subset\qbinom{V_n}{k_i}$ ($1\leq i\leq r$) are
$r$-cross $t$-intersecting, then $\prod_{i=1}^r|\FF_i|\leq\prod_{i=1}^r\qbinom{n-t}{k_i-t}$. Then, improving this result, Wen and Lv \cite{Wen-Lv} showed that 
the lower bound on $n$ can be replaced with weaker conditions
$q=2$ and $n\geq k_1+k_r+2$, or $q\geq 3$ and $n\geq k_1+k_r+1$.
Note that if $k_1=k_r=:k$, then it is easy to show 
$\prod_{i=1}^r|\FF_i|\leq\qbinom{n-t}{k-t}^r$ for $n\geq 2k$,
and if moreover $t=1$, then the conjectured lower bound on $n$ is $n\geq rk/(r-1)$.

Another related direction is to consider the sum of the sizes instead of the
product of sizes of families. Indeed, if $\FF_1,\ldots,\FF_r\subset\qbinom{V_n}k$
are $r$-cross $t$-intersecting with $\sum_{i=1}^r|\FF_i|\leq r\qbinom{n-t}{k-t}$,
then $\prod_{i=1}^r|\FF_i|\leq \qbinom{n-t}{k-t}^r$ easily follows from the
inequality of arithmetic and geometric means (AM-GM inequality).
However, $\sum_{i=1}^r|\FF_i|$ can exceed $r\qbinom{n-t}{k-t}$ even if
$r=2$ and $n\geq 2k$, see Wang--Zhang \cite{Wang-Zhang}.
This is one of the reasons we introduced the width of a family, and 
Theorem~\ref{thm:7} is new in this sense, though the corresponding product
result was already known.

\section{Outline and two reductions}
Our main result, Theorem~\ref{thm:5} and its application
Theorem~\ref{thm:6} will be obtained through the following sequence.
\begin{center}
 Proposition~\ref{prop:20}
$\Rightarrow$
 Proposition~\ref{prop:10}
$\Rightarrow$
 Proposition~\ref{prop:11}
$\Rightarrow$
 Theorem~\ref{thm:5}
$\Rightarrow$
 Theorem~\ref{thm:6}.
\end{center}
After preparing some tools for proofs in the next section, the three propositions 
above will be proved in the last section.
In this section, we first prove Theorem~\ref{thm:6} using Theorem~\ref{thm:5}. 
We then prove Theorem~\ref{thm:5} using Proposition~\ref{prop:11}
by induction on $rl-n$. 

Proposition~\ref{prop:11} is the base case ($rl-n=0$) of the
induction. This proposition, in turn, is proved by induction on $r$, and its base case 
($r=3$) is Proposition~\ref{prop:10}.
In this sense, Proposition~\ref{prop:10} is the core of the proof. 
To prove Proposition~\ref{prop:10},
we use Proposition~\ref{prop:20}, which is based on a junta result for linear maps 
(Lemma~\ref{lem:18}) due to Ellis, Kindler, and Lifshitz \cite{EKL}.

For Theorem~\ref{thm:7} ($r=2$), we give a direct and much simpler
proof in the last section.

\subsection{Reduction of Theorem~\ref{thm:6} to Theorem~\ref{thm:5}}
\begin{proof}
We prove Theorem~\ref{thm:6} under the assumption of Theorem~\ref{thm:5}.
If one of the families $\FF_i$ is empty, then $\prod_i|\FF_i|=0$ and 
Theorem~\ref{thm:6} clearly holds.
So, we may assume that all $\FF_i$ are non-empty.
Let $x_i=\|\FF_i\|_l$, that is, $|\FF_i|=\qbinom{x_i}l$,
for $1\leq i\leq r$. Then $x_i\in[l,n]$.
We need the following inequality, which is a consequence of the AM-GM inequality.

\begin{claim}\label{claim:8}
 Let $b_1,\ldots,b_r$ be non-negative reals. Then, we have
\[
(b_1+1)\cdots(b_r+1)\geq\{(b_1\cdots b_r)^{\frac1r}+1\}^r.
\]
\end{claim}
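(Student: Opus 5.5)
The plan is to expand both sides and compare the coefficients of the elementary symmetric polynomials, applying the AM-GM inequality to each coefficient separately. Write $G:=(b_1\cdots b_r)^{1/r}$. By the binomial theorem,
\[
(b_1+1)\cdots(b_r+1)=\sum_{j=0}^r e_j(b_1,\ldots,b_r),\qquad (G+1)^r=\sum_{j=0}^r\binom rj G^j,
\]
where $e_j$ denotes the $j$-th elementary symmetric polynomial. It then suffices to prove the termwise inequality $e_j(b_1,\ldots,b_r)\geq\binom rj G^j$ for each $0\leq j\leq r$.

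For the termwise inequality, apply AM-GM to the $\binom rj$ non-negative numbers $\prod_{i\in S}b_i$, where $S$ ranges over the $j$-subsets of $\{1,\ldots,r\}$. This gives
\[
e_j\geq\binom rj\Bigl(\prod_{|S|=j}\prod_{i\in S}b_i\Bigr)^{1/\binom rj}.
\]
By symmetry, each index $i$ lies in exactly $\binom{r-1}{j-1}$ of these subsets. So the product inside equals $(b_1\cdots b_r)^{\binom{r-1}{j-1}}$. Since $\binom{r-1}{j-1}/\binom rj=j/r$, the right-hand side is exactly $\binom rj G^j$. The cases $j=0$ and $j=r$ are trivial equalities.

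Alternatively, one could take logarithms and use convexity of $t\mapsto\log(e^t+1)$ with Jensen's inequality. This works directly when all $b_i>0$, and the case where some $b_i=0$ is trivial since then the right-hand side is $1$. I would present the coefficient argument, though, because it avoids this case distinction.

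No step is a real obstacle. The only point requiring care is the counting identity $\binom{r-1}{j-1}/\binom rj=j/r$, and this is routine.
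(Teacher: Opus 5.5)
Your proposal is correct and follows essentially the same route as the paper. Both expand the product into elementary symmetric sums and apply AM-GM to each degree-$j$ sum, obtaining $e_j\geq\binom rj (b_1\cdots b_r)^{j/r}$; you also spell out the counting identity $\binom{r-1}{j-1}/\binom rj=j/r$, which the paper uses without comment.
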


\begin{proof}
For each $1\leq i\leq r-1$, it follows from the AM-GM inequality that
\[
\sum_{I\in\binom{[r]}i}\prod_{j\in I}b_j
\geq \binom ri 
\left(\prod_{I\in\binom{[r]}i}\prod_{j\in I}b_j\right)^\frac1{\binom ri}
=\binom ri (b_1\cdots b_r)^\frac ir.
\]
Note that the cases $i=0$ and $i=r$ are identities.
Thus we have
\begin{align*}
 (b_1+1)\cdots(b_r+1)
&=\sum_{i=0}^r\sum_{I\in\binom{[r]}i}\prod_{j\in I}b_j\\
&\geq \sum_{i=0}^r \binom ri (b_1\cdots b_r)^\frac ir 
=\{(b_1\cdots b_r)^{\frac1r}+1\}^r,
\end{align*}
as desired.
\end{proof}

\begin{claim}\label{claim:9}
\[
\qbinom{x_1}l\cdots\qbinom{x_r}l\leq
\qbinom{\frac{x_1+\cdots+x_r}r}{l} ^r.
\]
\end{claim}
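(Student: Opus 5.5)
We prove Claim~\ref{claim:9} by factoring the Gaussian binomial into single-index terms and applying Claim~\ref{claim:8} to each factor. By definition,
\[
\qbinom{x_i}l=\prod_{j=0}^{l-1}\frac{q^{x_i-j}-1}{q^{l-j}-1},
\]
so it suffices to show, for each fixed $j\in\{0,\ldots,l-1\}$, that
\[
\prod_{i=1}^r\bigl(q^{x_i-j}-1\bigr)\leq\bigl(q^{\bar x-j}-1\bigr)^r,\qquad \bar x:=\frac{x_1+\cdots+x_r}{r}.
\]
Multiplying these $l$ inequalities over $j$ and dividing by the common positive denominator $\prod_j(q^{l-j}-1)^r$ then gives the claim. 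All factors are non-negative because $x_i\geq l>j$.

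Claim~\ref{claim:8} points in the opposite direction for products of the form $(b+1)$, so we convert to that form. Write $a_i:=q^{x_i-j}\geq q$. The needed inequality is $\prod_i(a_i-1)\leq\bigl((\prod_i a_i)^{1/r}-1\bigr)^r$, because $(\prod_i a_i)^{1/r}=q^{\bar x-j}$. Set $c_i:=a_i-1\geq q-1>0$. Apply Claim~\ref{claim:8} with $b_i=c_i$:
\[
\prod_i a_i=\prod_i(c_i+1)\geq\Bigl(\bigl(\textstyle\prod_i c_i\bigr)^{1/r}+1\Bigr)^r .
\]
Take $r$-th roots, which is allowed since both sides are positive, to get $(\prod_i a_i)^{1/r}-1\geq(\prod_i c_i)^{1/r}$. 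Raising to the $r$-th power gives exactly $\prod_i c_i\leq\bigl((\prod_i a_i)^{1/r}-1\bigr)^r$, as required.

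The only step needing care is the direction of this inequality: Claim~\ref{claim:8} bounds $\prod(b_i+1)$ from below, and we need an upper bound for $\prod(a_i-1)$. Rearranging it as above resolves this. We must also confirm that $\bar x\in[l,n]$, so that $\qbinom{\bar x}l$ is defined in the paper's sense; this holds because each $x_i\in[l,n]$. No other obstacles are expected. Equivalently, the argument shows that $x\mapsto\log(q^{x-j}-1)$ is concave, and the claim follows from Jensen's inequality.
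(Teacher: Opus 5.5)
Your proof is correct and follows the paper's argument essentially verbatim: you factor the Gaussian binomials over $j$, reduce to $\prod_i(q^{x_i-j}-1)\le(q^{\bar x-j}-1)^r$, and obtain this from Claim~\ref{claim:8} with $b_i=q^{x_i-j}-1$ after taking $r$-th roots. Your closing remark, that this is Jensen's inequality for the concave function $x\mapsto\log(q^{x-j}-1)$, is also correct.
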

\begin{proof}
Let $A=\prod_{j=0}^{l-1}({q^{l-j}-1})$.
Then we see that
\begin{align*}
\text{LHS}= & A^{-r}
\prod_{j=0}^{l-1}(q^{x_1-j}-1)\cdots(q^{x_r-j}-1),\\
\text{RHS}= &
A^{-r}
\prod_{j=0}^{l-1}\left(q^{\frac{x_1+\cdots+x_r}r-j}-1\right)^r
=A^{-r}\prod_{j=0}^{l-1}\{(q^{x_1-j}\cdots q^{x_r-j})^\frac 1r-1\}^r.
\end{align*}
Thus it suffices to show
\[
(q^{x_1-j}-1)\cdots(q^{x_r-j}-1)\leq\{(q^{x_1-j}\cdots q^{x_r-j})^\frac 1r-1\}^r 
\]
for each $0\leq j<l$. By letting $b_i=q^{x_i-j}-1$, we can rewrite the
above inequality as
\[
b_1\cdots b_r \leq \{( (b_1+1)\cdots(b_r+1))^\frac 1r -1\}^r, 
\]
or equivalently,
\[
( (b_1+1)\cdots(b_r+1))^\frac 1r \geq (b_1\cdots b_r)^\frac 1r +1, 
\]
which follows from Claim~\ref{claim:8}.
\end{proof}

By Claim~\ref{claim:9}, we have
\[
\prod_{i=1}^r|\FF_i|=\qbinom{x_1}l\cdots\qbinom{x_r}l\leq
\qbinom{\frac{x_1+\cdots+x_r}r}{l} ^r.
\]
Then Theorem~\ref{thm:5} gives $\frac{x_1+\cdots+x_r}r\leq n-1$ and
\[
\prod_{i=1}^r|\FF_i|\leq\qbinom{\frac{x_1+\cdots+x_r}r}{l}^r\leq \qbinom{n-1}{l}^r. 
\]
Moreover, if $\prod_{i=1}^r|\FF_i|=\qbinom{n-1}{l}^r$, then $x_1+\ldots+x_r=r(n-1)$.
Thus, by Theorem~\ref{thm:5}, there is a hyperplane $H\in\qbinom{V_n}{n-1}$
such that $\FF_1=\cdots=\FF_r=\qbinom Hl$.
This completes the proof of Theorem~\ref{thm:6} assuming Theorem~\ref{thm:5}.
\end{proof}

\subsection{Reduction of Theorem~\ref{thm:5} to Proposition~\ref{prop:11}}

We start with the case when $r=3$ and $n=3l$, which is the hardest part.

\begin{proposition}\label{prop:10}
Let $l$ be a sufficiently large positive integer.
Suppose that three families $\FF_1,\FF_2,\FF_3\subset\qbinom{V_{3l}}l$
are $3$-cross union.
Then, we have
\[
  \|\FF_1\|_l+\|\FF_2\|_l+\|\FF_3\|_l\leq 3(3l-1).
\]
Moreover, equality holds if and only if 
$\FF_1=\FF_2=\FF_3=\qbinom Hl$ for some hyperplane $H\in\qbinom{V_{3l}}{3l-1}$.
\end{proposition}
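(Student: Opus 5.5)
The plan is to prove Proposition~\ref{prop:10} by stability: first show that a near-extremal configuration must live inside a single hyperplane, then finish by exact counting. Since $\dim F_1+\dim F_2+\dim F_3=3l=\dim V_{3l}$, the condition $F_1+F_2+F_3\neq V_{3l}$ says precisely that $F_1,F_2,F_3$ never form a direct sum decomposition $V_{3l}=F_1\oplus F_2\oplus F_3$. Write $x_i=\|\FF_i\|_l$ and suppose, for contradiction (or to characterise equality), that $x_1+x_2+x_3\geq 3(3l-1)$. As each $x_i\leq 3l$, every $x_i\geq 3l-3$. Using $\qbinom xl\approx q^{l(x-l)}$, this means each $\FF_i$ has density at least about $q^{-3l}$ in $\qbinom{V_{3l}}l$, and the product of the densities is at least about $q^{-3l}$.

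\textbf{Step 1 (translation to linear maps).} I would encode the direct-sum triples by linear maps. Fix a reference decomposition $V_{3l}=U_1\oplus U_2\oplus U_3$. The $l$-spaces complementary to a fixed $(2l)$-space are exactly the graphs of linear maps, so a generic pair $(F_1,F_2)$ is described by maps between the $U_j$. Completing it to a direct sum with some $F_3\in\FF_3$ then becomes the condition that two such maps \emph{fail} to agree on any nonzero vector. In this language the 3-cross union property says: for each $F_3\in\FF_3$, the families of maps coming from $\FF_1$ and $\FF_2$ (relative to $F_3$) are cross-intersecting, meaning every pair agrees on some nonzero vector. The goal of this step is to reach exactly the setting of Proposition~\ref{prop:20}, which I take to be the EKL-based junta statement (Lemma~\ref{lem:18}). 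Its conclusion should be: a pair of cross-intersecting families of linear maps that are both not too small must be essentially contained in a ``dictator'' family $\{A: A(v)\in W\}$, in a form that translates back to ``$F$ meets, or lies in, a fixed hyperplane''.

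\textbf{Step 2 (stability).} Averaging over $F_3\in\FF_3$ (and symmetrically over the other indices), the density bound from the first paragraph guarantees that for many choices of $F_3$ the two induced families are large enough for Proposition~\ref{prop:20} to apply. This yields a hyperplane $H$ such that all but a $q^{-\Omega(l)}$-fraction of the measure of each $\FF_i$ lies in $\qbinom Hl$.

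\textbf{Step 3 (cleanup).} Next I would show that no $\FF_i$ can contain a subspace $F\not\leq H$. Such an $F$ satisfies $\dim(F\cap H)=l-1$. Inside $H$, the typical pairs $(F',F'')$ with $F'\in\FF_j$, $F''\in\FF_k$ ($\{i,j,k\}=\{1,2,3\}$) satisfy $F'\oplus F''\oplus(F\cap H)=H$, and then $F+F'+F''=V_{3l}$. Counting how many pairs of $\qbinom Hl\times\qbinom Hl$ are blocked by this single $F$ shows that including $F$ costs far more width in $\FF_j,\FF_k$ than it gains in $\FF_i$. Since each width is a logarithm of a size, the one-element gain to $\FF_i$ is negligible compared with losing a constant fraction of $\FF_j$ or $\FF_k$. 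Hence all $\FF_i\subseteq\qbinom Hl$, so each $x_i\leq 3l-1$, which gives the bound, with equality only when $\FF_1=\FF_2=\FF_3=\qbinom Hl$.

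\textbf{Main obstacle.} I expect the hardest part to be Step~1 together with the quantitative matching in Step~2. The densities here are only of size $q^{-\Theta(l)}$, so I need the junta theorem in its strong regime where exponentially small density is allowed. I also need to make sure the dictator it outputs is a genuine hyperplane condition and not just a ``contains a line'' condition. The latter is the extremal structure of the dual problem and must be ruled out using $r=3$ (for $r=2$, $n=2l$ it really occurs, by Theorem~\ref{thm:7}). To exclude it, I would first try showing that a line-type structure forces $x_1+x_2+x_3\leq 3(3l-1)-\Omega(l)$ when $n=3l$.
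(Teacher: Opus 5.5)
Your skeleton is the paper's: quotient by $C\in\FF_3$, parametrize $l$-spaces of $V/C$ as graphs of maps $\ol B_0\to\ol A_0$, apply Proposition~\ref{prop:20} to the cross-intersecting families of maps, extract a hyperplane, and clean up by blocked pairs in $H/(C'\cap H)$. However, two load-bearing steps are missing.

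\textbf{The density hypothesis of Proposition~\ref{prop:20} is not established.} Step~2 rests on a false quantitative claim. From $x_1+x_2+x_3\ge 3(3l-1)$ you only get $\mu_i\ge q^{-3l-O(1)}$ and $\mu_1\mu_2\mu_3\ge q^{-3l-O(1)}$. Proposition~\ref{prop:20} needs \emph{each} of the two induced densities to be at least $q^{-l-K}$. This hypothesis cannot be dropped: the indicator of a complexity-$2$ cell $\{M:Mu_1=v_1,\,Mu_2=v_2\}$ is cross-intersecting with itself and has density $q^{-2l}$, yet its conditional density on every complexity-$1$ cell is at most $q^{-l}$.

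The missing idea is the balancing step of Claims~\ref{claim:25}--\ref{claim:26}.
\begin{itemize}
\item Averaging over $F_3\in\FF_3$ is not enough, because $\FF_3$ is sparse. The $q$-Kneser eigenvalue bound (Lemma~\ref{lem:14}) gives $|\FF_j(C)|\ge\frac12\theta_0\mu_j$ for all but $o(|\FF_3|)$ of the $C\in\FF_3$.
\item Theorem~\ref{thm:3}, applied to the cross-intersecting projections of $\FF_1(C)$ and $\FF_2(C)$ in the $2l$-dimensional space $V/C$, then yields $\mu_1\mu_2\le q^{-2l+O(1)}$, and likewise for the other pairs.
\item Together with the product lower bound, this forces $\mu_i=q^{-l+O(1)}$ for every $i$. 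Without it, profiles such as $(q^{-3l/2},q^{-3l/2},1)$ are not excluded, and no choice of which family to quotient by rescues you.
\end{itemize}
Your phrase ``generic pair'' also hides Lemma~\ref{lem:15}. You need a single $\ol A_0$ for which both $f_1$ and $f_2$ keep a constant fraction of their mass on graphs, and a first-moment argument does not give this for both at once.

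\textbf{The value-cell alternative of Proposition~\ref{prop:20} is not handled.} At a fixed $C$ it only says that a $q^{-O(1)}$ fraction of the $l$-spaces of $V/C$ through some line $\ol L_C$ lie in both projected families. That is the star structure, which is genuinely extremal for two families in dimension $2l$. It is also compatible with $|\FF_i|=q^{2l^2-l+O(1)}$, since about $q^{2l^2-l}$ members of $\qbinom{V}{l}$ meet a given $(l+1)$-space. So no width deficit can be extracted at that $C$. In fact even the global star $\{F:L\le F\}$ loses only about $3$ in total width, not $\Omega(l)$.

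The paper instead exploits the freedom in $C$ (Claim~\ref{claim:30}).
\begin{itemize}
\item If every $C\in\GG$ gave a value cell, each $L_C=\pi_C^{-1}(\ol L_C)$ would be an $(l+1)$-space meeting at least $q^{-O(1)}|\FF_1|$ members of $\FF_1$.
\item Lemma~\ref{lem:21} (near-disjointness for far-apart $(l+1)$-spaces plus Cauchy--Schwarz) allows only $q^{O(l)}$ such spaces.
\item Each such space arises from at most $\qbinom{l+1}{l}=q^{l+O(1)}$ choices of $C$, contradicting $|\GG|\ge q^{2l^2-l-O(1)}$.
\end{itemize}

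\textbf{Step~2 also overstates the output.} Proposition~\ref{prop:20} gives constant conditional density on a common complexity-$1$ cell, not near-containment. The dual cell therefore yields only $|\FF_i\cap\qbinom{H_C}l|\ge q^{2l^2-l-O(1)}$ for $i=1,2$, and says nothing about $\FF_3$ beyond $C\le H_C$. Your Step~3 must therefore run in order. First show $\FF_3\subset\qbinom{H_C}l$ via Lemma~\ref{lem:22} applied to those two subfamilies. Then handle $\FF_1$ and $\FF_2$ using all of $\FF_3$, which again needs the lower bound on $|\FF_3|$ from the balancing step.
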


Then, by induction on $r$, we show the case when $n=rl$ for all $r\geq 3$.

\begin{proposition}\label{prop:11}
Let $r\geq 3$, and let $l$ be a sufficiently large positive integer.
Suppose that $r$ families $\FF_1,\ldots,\FF_r\subset\qbinom{V_{rl}}l$
are $r$-cross union.
Then, we have
\[
  \|\FF_1\|_l+\cdots+\|\FF_r\|_l\leq r(rl-1).
\]
Moreover, equality holds if and only if 
$\FF_1=\cdots=\FF_r=\qbinom Hl$ for some hyperplane $H\in\qbinom{V_{rl}}{rl-1}$.
\end{proposition}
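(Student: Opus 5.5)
We argue by induction on $r$, with Proposition~\ref{prop:10} as the base case $r=3$. Let $r\geq 4$ and suppose Proposition~\ref{prop:11} holds for $r-1$ (with $l$ large enough for all smaller values, which is fine since only finitely many $r$ are involved for a fixed target). Let $\FF_1,\ldots,\FF_r\subset\qbinom{V_{rl}}l$ be $r$-cross union, all non-empty (otherwise we are done as remarked after Theorem~\ref{thm:5}). The plan is to merge two families into one family living in a quotient space of dimension $(r-1)l$.

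\textbf{Merging step.} Fix $F_r\in\FF_r$ and pass to the quotient $W:=V_{rl}/F_r$, which has dimension $(r-1)l$. For $i\leq r-1$ set
\[
\FF_i(F_r):=\{(F+F_r)/F_r : F\in\FF_i,\ F\cap F_r=\{0\}\}\subset\qbinom{W}l .
\]
Then $\FF_1(F_r),\ldots,\FF_{r-1}(F_r)$ are $(r-1)$-cross union in $W$. If $F_1+\cdots+F_{r-1}$ maps onto $W$, then $F_1+\cdots+F_r=V_{rl}$, which is forbidden. By the induction hypothesis,
\[
\sum_{i<r}\|\FF_i(F_r)\|_l\leq (r-1)((r-1)l-1).
\]
Next we relate $|\FF_i(F_r)|$ to $|\FF_i|$. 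There are two ways to do this.
\begin{itemize}
\item[(a)] Average over $F_r$ chosen uniformly from $\qbinom{V_{rl}}l$ that lie in $\FF_r$.
\item[(b)] Use a double counting of pairs $(F,F_r)$.
\end{itemize}
Each $l$-space $U$ in $W$ has $q^{l^2}$ lifts complementary to $F_r$. Hence $|\{F\in\FF_i: F\cap F_r=0\}|\leq q^{l^2}|\FF_i(F_r)|$, and the number of $F$ meeting $F_r$ nontrivially is at most a $O(q^{-(r-2)l})$ fraction of $\qbinom{rl}l$. Converting sizes into widths, multiplying by $q^{l^2}$ shifts the width by exactly $l$. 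This holds because $\qbinom{x+l}{l}\approx q^{l^2}\qbinom xl$ for $x\ge l$ up to a factor $1+O(q^{-x+l})$. The bound then becomes
\[
\sum_{i<r}\|\FF_i\|_l\lesssim (r-1)(rl-1).
\]
The error terms are controlled because $l$ is large.

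\textbf{Obtaining the $r$-th term.} The previous step bounds only $r-1$ of the widths. Applying it with each index playing the role of $r$ and summing gives
\[
(r-1)\sum_i\|\FF_i\|_l\lesssim r(r-1)(rl-1),
\]
which is the claimed bound up to error terms. The main obstacle is that these error terms are additive $o(1)$ in width, so they cannot give the sharp inequality or the equality case directly. To handle this, we split into cases.
\begin{itemize}
\item If some $\FF_i$ is far from a full $\qbinom Hl$, so its width deficit is bounded below by a constant, we expect to get slack. For this we use the stability (uniqueness) part of the induction hypothesis: either $\sum_{i<r}\|\FF_i(F_r)\|_l$ is strictly smaller by a definite amount, or the $\FF_i(F_r)$ all equal $\qbinom{H'}l$ for a hyperplane $H'$ of $W$.
\item In the near-extremal case, with all widths exceeding $rl-1-\varepsilon$, we use the equality structure. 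For many choices of $F_r$ the traces are contained in hyperplanes of $V_{rl}/F_r$. Their preimages are hyperplanes $H(F_r)\supseteq F_r$ of $V_{rl}$. We then show that these hyperplanes must coincide with a single $H$, using the fact that the families are too large to fit in the union of two different hyperplane families. This gives $\FF_i\subseteq\qbinom Hl$ for $i<r$, and by symmetry for all $i$. After that, the bound and the equality follow from the trivial estimate $\|\FF_i\|_l\leq rl-1$.
\end{itemize}

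To make the near-extremal analysis robust, we would first strengthen the inductive statement to a stability version. This is the step we expect to be hardest: an $(r-1)$-cross union system whose width sum is within $\delta$ of the maximum must have all families essentially contained in $\qbinom{H}l$ for one hyperplane $H$. We would try to extract this stability from the proof of Proposition~\ref{prop:10} (the junta argument via Proposition~\ref{prop:20}) and carry it through the induction.
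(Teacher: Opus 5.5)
\textbf{There is a genuine gap: the argument as written does not prove the sharp bound or the equality case.}

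Your skeleton matches the paper's: quotient by a member of one family, induct on the number of families, and let each index play the distinguished role. As written, however, it only yields $\sum_i\|\FF_i\|_l\le r(rl-1)+o(1)$. The step that should sharpen this cannot come from the induction hypothesis you carry.
\begin{enumerate}
\item Proposition~\ref{prop:11} for $r-1$ characterizes the equality case but gives no quantitative margin away from it. So ``strictly smaller by a definite amount'' does not follow from uniqueness.
\item Your quotient families are at best near-extremal. The equality case therefore says nothing about them lying in a hyperplane of $V_{rl}/F_r$.
\item The case split is not sound as stated. A constant width deficit in one family produces no slack in the sum, because the other widths may exceed $rl-1$.
\item Your trivial count of $l$-spaces meeting $F_r$ is negligible against $|\FF_i|$ only when $rl-\|\FF_i\|_l<r-2$. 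A priori you only know $rl-\|\FF_i\|_l\le r$. The paper instead chooses the distinguished member via the spectral Lemma~\ref{lem:17}, which needs only that the deficits sum to at most $r$.
\item You correctly sense that a stability theorem is needed, but that theorem is the whole difficulty and you do not prove it. The version you suggest concludes only ``essentially contained''. That still leaves uncontrolled the members of each family that meet $F_r$.
\end{enumerate}

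\textbf{The missing idea is to induct on an exact structural assertion $\mathsf P_m(K)$.} It says: if $m$-cross union families in $\qbinom{V_{ml}}l$ satisfy either $\sum_i\|\FF_i\|_l\ge m(ml-1)$, or $|\FF_i|\ge q^{-K}\qbinom{ml-1}l$ for every $i$, then all of them lie in $\qbinom Hl$ for a single hyperplane $H$.
\begin{itemize}
\item The base case is Proposition~\ref{prop:24}. Its clause (ii) is proved with the junta machinery and does not follow from Proposition~\ref{prop:10}.
\item In the induction step, the $O(1/l)$ width losses become harmless constant factors in size. Write $\delta_i:=ml-\|\FF_i\|_l$. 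Take $D\in\FF_s$ supplied by Lemma~\ref{lem:17}; the quotient widths are then at least $(m-1)l-\delta_i-O(1/l)$. Comparing these with the bound from $\mathsf P_{m-1}(1)$ gives $\delta_s\le 1+O(1/l)$ for every $s$, which is exactly clause (ii).
\item Next apply $\mathsf P_{m-1}(K_1)$ to the projections at a suitable $D\in\FF_m$. This places every member of $\FF_1,\ldots,\FF_{m-1}$ meeting $D$ trivially in one hyperplane $H\supseteq D$.
\item Claim~\ref{claim:34} then finishes: pass to $H/(C\cap H)$ and apply $\mathsf P_{m-1}$ there. This shows that a member $C\not\subset H$ of any family would complete members of large subfamilies of the other families inside $H$ to an $m$-tuple spanning $V$.
\end{itemize}
Hence every family lies in $H$. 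The sharp bound and the equality case then follow immediately, and no error term ever has to be beaten.
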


Finally, we prove Theorem~\ref{thm:5} by induction on $s$, where 
$s=rl -n$. The initial step $s=0$ is exactly Proposition~\ref{prop:11}.
Then, the induction step is easy, and we include it here.

\begin{proof}[Proof of Theorem~\ref{thm:5} from Proposition~\ref{prop:11}]
Let $s\geq 0$. 
Suppose that Theorem~\ref{thm:5} is true for the case $rl -n=s$, 
and we will consider the case $rl -n=s+1$. Write $V$ for $V_n$.

Let $\FF_1,\ldots,\FF_r\subset\qbinom{V}l$ be $r$-cross union families
with $rl -n=s+1$. Recall that $x_i=\|\FF_i\|_l$ for $1\leq i\leq r$,
and we shall show that 
\begin{equation}\label{eq:1}
\sum_{i=1}^rx_i\leq r(n-1). 
\end{equation}
If one of the families $\FF_i$ is empty, then 
$\sum_{i=1}^rx_i\leq (r-1)n\leq r(n-1)$ because $r\leq l+1\leq n$.
Hence we may assume that all the families are non-empty.

Let $W$ be an $(n+1)$-dimensional vector space with orthogonal decomposition
$W=V\oplus L$ where $\dim L=1$. Let $p:W\to V$ be the projection.
Let 
\[
\HH_i=\{H\in\qbinom Wl : p(H)\leq F\text{ for some }F\in\FF_i\}, 
\]
and let $\HH_i=\AA_i\cup\BB_i$ be a partition defined by
\begin{align*}
\AA_i&=\{H\in\HH_i:p(H)\in \FF_i\},\\
\BB_i&=\{H\in\HH_i:p(H)\in \Delta_{l-1}(\FF_i)\},
\end{align*}
where $\Delta_j(\FF_i)=\{J\in\qbinom{V}j:J\leq F\text{ for some }F\in\FF_i\}$
is the $j$-th shadow of $\FF_i$.
If we represent $\FF_i$ by $l\times n$ matrices in reduced echelon form,
then we can represent $\HH_i$ by $l\times (n+1)$ matrices in reduced echelon 
form as well. Namely, a matrix of $\AA_i$ is obtained from an $l\times n$
matrix of $\FF_i$ by adding any column vector in $\F_q^l$ as an $(n+1)$-th
column, and a matrix of $\BB_i$ is obtained from an $(l-1)\times n$ matrix
of $\Delta_{l-1}(\FF_i)$ by adding all zeros in $l$-th row and
$(n+1)$-th column except $(l,n+1)$-position in which we put $1$.
By the construction we have $|\AA_i|=q^l|\FF_i|$. 
(We will give alternative reasoning in \S~\ref{sec:lifts}.)
As for $\BB_i$, we invoke a vector space version of the Kruskal--Katona theorem 
(\cite{CP}, Theorem 1.4) due to Chowdhury and Patk\'os, which states that
if $|\FF|=\qbinom xl$, then $|\Delta_{l-1}(\FF)|\geq\qbinom x{l-1}$.
Then we have $|\BB_i|=|\Delta_{l-1}(\FF_i)|\geq\qbinom{x_i}{l-1}$, and
\[
|\HH_i|=|\AA_i|+|\BB_i|\geq q^l\qbinom{x_i}l+\qbinom{x_i}{l-1}
=\qbinom{x_i+1}l,
\]
where the last equality follows from the definition of $q$-binomial coefficient
(see also \S~\ref{sec:q-binom}). Therefore, we obtain
\begin{equation}\label{eq:2}
 z_i:=\|\HH_i\|_l\geq x_i+1.
\end{equation}

Now we notice that $\HH_1,\ldots,\HH_r\subset\qbinom{W}l$ are $r$-cross
union families, in fact, their sum cannot equal $W$.
Indeed, if $H_1+\cdots+H_r=W$, then $V=p(W)=p(H_1)+\cdots+p(H_r)\leq F_1+\cdots+F_r$, contradicting the $r$-cross union condition.
Moreover, since $rl-(n+1)=s$, we can apply induction hypothesis to 
$\HH_1,\ldots,\HH_r$, and we get
\begin{equation}\label{eq:3}
 \sum_{i=1}^r z_i\leq r((n+1)-1)=rn.
\end{equation}
Then, \eqref{eq:1} follows from \eqref{eq:2} and \eqref{eq:3}.
(We remark that this part of the proof works for $r=2$ as well, and we will use it
to prove Theorem~\ref{thm:7}.)

Finally we consider the structure of the families for the equality case.
Suppose that equality holds in \eqref{eq:1}. Then equality holds both in
\eqref{eq:2} and \eqref{eq:3}.
By induction hypothesis, there is a hyperplane $H\in\qbinom{W}n$ such that
$\HH_i=\qbinom{H}l$ for all $i$. 
Note that if $F\in\qbinom Vl$, then $F\in\HH_i$ if and only if $F\in\FF_i$.

If $H=V$, then $\HH_i=\qbinom Vl$, and every $F\in\qbinom Vl$ satisfies
$p(F)=F$ and so $F\in\FF_i$ for all $i$. 
Since $n=rl-s-1<rl$, we can partition a basis of $V$ into $r$ groups of size 
at most $l$, and extend the span of each group to a subspace $F_i\in\FF_i$ 
($1\leq i\leq r$) so that $F_1+\cdots+F_r=V$. This contradicts the $r$-cross union
condition. Thus, we may assume that $H\not\subset V$, and let $H':=H\cap V$. 
Then $H'\in\qbinom {V}{n-1}$ is a hyperplane in $V$.
If $F\in\FF_i$, then $F\leq H$ because $\FF_i\subset\HH_i$, and
$F\leq V$ because $\FF_i\subset\qbinom Vl$.
Thus $F\leq H\cap V$ and $\FF_i\subset\qbinom {H'}l$.
On the other hand, if $F\in\qbinom{H'}l$, then $F\in\HH_i$ because
$\qbinom{H'}l\subset\qbinom Hl=\HH_i$, and $p(F)=F$. Thus $F\in\FF_i$ and $\qbinom{H'}l\subset\FF_i$. Therefore, we have $\qbinom{H'}l=\FF_i$ for all $i$, as desired.
This completes the proof of Theorem~\ref{thm:5} assuming Proposition~\ref{prop:11}.
\end{proof}

We extract a result from the proof above, and record it for later use to prove 
Theorem~\ref{thm:7}. Note that we did not use the assumption $r\geq 3$ in the
proof of this part, in fact, the following holds for $r=2$ as well.

\begin{lemma}\label{lem:12}
Let $r\geq 2$ and $l+1\leq n<rl$.
Suppose that $\FF_1,\ldots,\FF_r\subset\qbinom{V_n}l$ are non-empty $r$-cross union
families. Then there exist $r$-cross union families
$\HH_1,\ldots,\HH_r\subset\qbinom{V_{n+1}}l$ such that
$\|\HH_i\|_l\geq\|\FF_i\|_l+1$ for all $1\leq i\leq r$.
Moreover, if there is a hyperplane $H_n\leq V_{n+1}$ such that 
$\HH_i=\qbinom{H_n}l$ for all $i$, then there is a hyperplane 
$H_{n-1}\leq V_n$ such that $\FF_i=\qbinom{H_{n-1}}l$ for all $i$.
\end{lemma}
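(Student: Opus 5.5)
This is Lemma~\ref{lem:12}, which is extracted from the proof of Theorem~\ref{thm:5} given just above. The plan is to reuse that lifting construction verbatim and check that nothing in it used $r\geq 3$ or the induction hypothesis.

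\textbf{Construction of the lifted families.} Write $V=V_n$ and $W=V\oplus L=V_{n+1}$ with $\dim L=1$, and let $p\colon W\to V$ be the projection. Define
\[
\HH_i=\{H\in\qbinom Wl: p(H)\leq F \text{ for some } F\in\FF_i\}.
\]
Split $\HH_i=\AA_i\cup\BB_i$ according to whether $p(H)$ lies in $\FF_i$ or in the shadow $\Delta_{l-1}(\FF_i)$.

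\textbf{Size of $\HH_i$.} The reduced echelon description gives $|\AA_i|=q^l|\FF_i|$ and $|\BB_i|=|\Delta_{l-1}(\FF_i)|$. Set $x_i=\|\FF_i\|_l$; this lies in $[l,n]$ because $\FF_i$ is non-empty. The Chowdhury--Patk\'os Kruskal--Katona theorem gives $|\Delta_{l-1}(\FF_i)|\geq\qbinom{x_i}{l-1}$. The $q$-Pascal identity
\[
q^l\qbinom{x}{l}+\qbinom{x}{l-1}=\qbinom{x+1}{l}
\]
holds for real $x$, since both sides are the same expression in $q^x$. Together these give $|\HH_i|\geq\qbinom{x_i+1}l$. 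Monotonicity of $x\mapsto\qbinom xl$ then yields $\|\HH_i\|_l\geq x_i+1$. The value $x_i+1\leq n+1$ is in range, so the width is well defined.

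\textbf{Cross union property.} Suppose $H_1+\cdots+H_r=W$ with $H_i\in\HH_i$. Applying $p$ gives
\[
V=p(H_1)+\cdots+p(H_r)\leq F_1+\cdots+F_r
\]
for some $F_i\in\FF_i$. This contradicts the assumption that the $\FF_i$ are $r$-cross union, so the $\HH_i$ are $r$-cross union.

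\textbf{The ``moreover'' part.} Assume $\HH_i=\qbinom{H_n}l$ for a hyperplane $H_n$ of $W$.

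Every $F\in\qbinom Vl$ satisfies $p(F)=F$. Hence for such $F$ we have $F\in\HH_i$ if and only if $F\in\FF_i$. (If $F\leq F'\in\FF_i$ with $\dim F=\dim F'$, then $F=F'$.)

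First rule out $H_n=V$. In that case $\FF_i=\qbinom Vl$ for every $i$. Since $n<rl$, split a basis of $V$ into $r$ groups, each of size at most $l$. The hypothesis $n\geq l+1$ lets us extend the span of each group to an $l$-dimensional subspace of $V$. These subspaces lie in the respective $\FF_i$ and sum to $V$, a contradiction.

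So $H_n\neq V$, and $H_{n-1}:=H_n\cap V$ is a hyperplane of $V$. Each $\FF_i$ is contained in $\HH_i\cap\qbinom Vl=\qbinom{H_{n-1}}l$. Conversely, every $F\in\qbinom{H_{n-1}}l$ lies in $\HH_i$ and satisfies $p(F)=F$, so $F\in\FF_i$. Therefore $\FF_i=\qbinom{H_{n-1}}l$ for all $i$.

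\textbf{Main obstacle.} The only delicate inputs are the real-parameter Pascal identity and the fact that the shadow bound applies at non-integer widths. Both were already used above, so I expect no genuine obstacle.
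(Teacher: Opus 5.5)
Your proposal is correct and follows the paper's own proof essentially verbatim. It uses the same lift $\HH_i=\{H\in\qbinom Wl: p(H)\leq F \text{ for some } F\in\FF_i\}$ split into $\AA_i\cup\BB_i$, the Chowdhury--Patk\'os shadow bound with the $q$-Pascal identity, the same projection argument for the cross union property, and the same exclusion of $H_n=V$ followed by taking $H_{n-1}=H_n\cap V$. Your extra justifications (why $F\in\HH_i$ iff $F\in\FF_i$ for $F\leq V$, and why the Pascal identity holds for real $x$) are valid and fill in details the paper leaves implicit.
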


\section{Preliminaries}
In this paper, we always assume that $q$ is a fixed prime power.
For a constant depending on $q$, we write $O(1)$ instead of $O_q(1)$.
(We use $O_q(1)$ in Appendix only.) We also write $O(1/l)$ for $O_q(1/l)$
when $l\to\infty$.
For a positive integer $n$, let $[n]:=\{1,2,\ldots,n\}$.

\subsection{\texorpdfstring{$q$-Binomial}{q-Binomial} coefficients}\label{sec:q-binom}
By definition of the $q$-binomial coefficients, it follows that
$\qbinom {x+1}l=q^l\qbinom{x}l+\qbinom{x}{l-1}$
for every positive integer $l$ and every real number $x\geq l-1$,
and $\qbinom nl=\qbinom n{n-l}$ for every integer $0\leq l\leq n$.
For the situation $l\to\infty$, we often use the following estimates without mention.

\begin{lemma}
Let $a>1$, and let $(b_l)$ be a bounded sequence of real numbers.
Suppose that $x_l\geq l$ for all sufficiently large $l$.
Then $x_l=al+b_l+O(1/l)$ if and only if $\qbinom{x_l}l=q^{(a-1)l^2+b_ll+O(1)}$.
All implicit constants may be chosen uniformly when $(b_l)$ and
$l(x_l-al-b_l)$ range over fixed bounded sets.
\end{lemma}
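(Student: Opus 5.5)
We work directly from the product formula
\[
\qbinom{x}{l}=\prod_{j=0}^{l-1}\frac{q^{x-j}-1}{q^{l-j}-1}
\]
and take logarithms. Write $x_l=l+y_l$ with $y_l\ge 0$. Then each factor is $q^{x_l-l}\cdot\frac{1-q^{-(x_l-j)}}{1-q^{-(l-j)}}$. Taking $\log_q$ gives
\[
\log_q\qbinom{x_l}{l}=l(x_l-l)+\sum_{j=0}^{l-1}\log_q\bigl(1-q^{-(x_l-j)}\bigr)-\sum_{j=0}^{l-1}\log_q\bigl(1-q^{-(l-j)}\bigr).
\]
The key step is that both error sums are $O(1)$, uniformly. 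Reindexing with $m=l-j\ge 1$, the last sum is $\sum_{m=1}^{l}\log_q(1-q^{-m})$, which converges to a finite constant since $q\ge 2$. For the middle sum, $x_l-j\ge l-j=m\ge 1$, so each term satisfies $|\log_q(1-q^{-(x_l-j)})|\le|\log_q(1-q^{-m})|$, and the sum is bounded by the same convergent series. Hence
\[
\qbinom{x_l}{l}=q^{l(x_l-l)+O(1)}
\]
with an absolute constant depending only on $q$; this needs only $x_l\ge l$.

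For the forward direction, substitute $x_l=al+b_l+\varepsilon_l$ with $|\varepsilon_l|\le C/l$. Then
\[
l(x_l-l)=(a-1)l^2+b_ll+l\varepsilon_l,
\]
and $|l\varepsilon_l|\le C$. So $\qbinom{x_l}{l}=q^{(a-1)l^2+b_ll+O(1)}$, where the $O(1)$ depends only on $q$ and $C$. This gives uniformity when $l(x_l-al-b_l)$ ranges over a bounded set. The bound on $(b_l)$ is not even needed here.

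For the converse, suppose $\qbinom{x_l}{l}=q^{(a-1)l^2+b_ll+O(1)}$. Comparing with $q^{l(x_l-l)+O(1)}$ gives
\[
l(x_l-l)=(a-1)l^2+b_ll+O(1).
\]
Dividing by $l$ yields $x_l=al+b_l+O(1/l)$, with the same uniformity. The hypothesis $a>1$ and boundedness of $(b_l)$ are used only so that $x_l\ge l$ is consistent for large $l$ and the statement is meaningful. The condition $x_l\ge l$ itself is the only ingredient the error analysis needs.

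There is no real obstacle here. The one point needing care is bounding the middle sum uniformly in $x_l$, and the monotonicity comparison $x_l-j\ge l-j$ handles it. The $O(1)$ in the key estimate is independent of everything except $q$, so all implicit constants are uniform as claimed.
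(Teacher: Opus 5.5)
Your argument is correct and is essentially the paper's proof: you both factor out $q^{l(x_l-l)}$ and control the leftover products by the convergent product $c_q=\prod_{r\ge 1}(1-q^{-r})$, using only $x_l\ge l$. The paper packages this multiplicatively as $q^{l(x-l)}\le\qbinom{x}{l}\le c_q^{-1}q^{l(x-l)}$, whereas you bound the two logarithmic sums separately.
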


\begin{proof}
Set $c_q:=\prod_{r=1}^\infty(1-q^{-r})\in(0,1)$. For every real $x\geq l$,
\[
 \qbinom xl=q^{l(x-l)}\frac{\prod_{r=1}^l(1-q^{-(x-l+r)})}{\prod_{r=1}^l(1-q^{-r})}.
\]
Since $x\geq l$, the quotient of products lies between $1$ and $c_q^{-1}$.  Hence
\[
 q^{l(x-l)}\leq \qbinom xl\le c_q^{-1}q^{l(x-l)},
\]
which gives both implications immediately.
\end{proof}

\subsection{A quotient map and the number of lifts}\label{sec:lifts}
Let $V$ be an $n$-dimensional vector space over $\F_q$, and let $W$
be a $k$-subspace of $V$.
The number of $i$-subspaces whose intersection with $W$ has dimension $j$
is 
\begin{align}\label{eq:4}
q^{(k-j)(i-j)}\qbinom{n-k}{i-j}\qbinom kj  
\end{align}
(see, e.g., Lemma~9.3.2 in \cite{Godsil-Meagher}).

Suppose that $n\geq k+l$. Let $\overline V=V/W$ be the
quotient space, and $\pi:V\to\overline V$ be the quotient map.
For $\overline F\in\qbinom{\overline V}l$, we say that $F\in\qbinom Vl$
is a lift of $\overline F$ if $\pi(F)=\overline F$. Let $U=\pi^{-1}(\overline F)$. 
Choose one lift $F_0$ of $\ol F$. Then $U=F_0\oplus W$.
Every lift of $\ol F$ is the graph $F_\phi=\{\phi(x)+x:x\in F_0\}$
of a unique linear map $\phi:F_0\to W$. Hence the number of lifts is 
\begin{align}\label{eq:5}
|\Hom(F_0,W)| = q^{kl}=q^{\dim W\cdot\,\dim\ol F}. 
\end{align}

\subsection{The number of linear maps with a constraint}
Let $A,B$ be $l$-dimensional vector spaces over $\F_q$, and let $A^*,B^*$
be the corresponding dual spaces. Let $\Hom(B,A)$ denote the set of linear
maps from $B$ to $A$.
We often identify $\Hom(B,A)$ with the set of $\dim A\times\dim B$ matrices
over $\F_q$ (by fixing bases of $B$ and $A$).

Let $0\neq u\in B$ and $v\in A$ be given. Then we have
\begin{align}\label{eq:6}
|\{M\in\Hom(B,A):Mu=v\}|=q^{l(l-1)}. 
\end{align}
To see this, fix a basis $\{u,e_2,e_3,\ldots,e_l\}$ of $B$. Then
$M:B\to A$ is determined by choosing the image of the basis. It is fixed
that $u$ goes to $v$, and the other $e_j$ can go any vector in $A$.
Thus, there are $q^l$ choices for each $e_j$, and so $q^{l(l-1)}$ choices in total. 

Recall that $M^*\in\Hom(A^*,B^*)$ is defined by $M^*(\phi)=\phi\circ M$,
and $M\mapsto M^*$ gives $\Hom(B,A)\cong\Hom(A^*,B^*)$. By duality,
for fixed $0\neq \phi\in A^*$ and $\psi\in B^*$, we also have
\begin{align}\label{eq:7}
 |\{M\in\Hom(B,A):\phi\circ M=\psi\}|=
 |\{M^*\in\Hom(A^*,B^*):M^*(\phi)=\psi\}|=q^{l(l-1)}.
\end{align}

\subsection{\texorpdfstring{$q$-Kneser}{q-Kneser} graphs}
Let $n\geq 2k$, and 
let $V$ denote an $n$-dimensional vector space over $\F_q$, and let 
$\Omega=\qbinom Vk$. The $q$-Kneser $K_q(n,k)$ on the vertex set $\Omega$ is
defined as follows.
Two vertices $A,B\in\Omega$ are adjacent, denoted by $A\sim B$, if and only if
$A\cap B=\{0\}$. The eigenvalues of $K_q(n,k)$ are 
\begin{align}\label{eq:8}
 \theta_i=(-1)^iq^{\binom i2+k(k-i)}\qbinom{n-k-i}{k-i}, 
\end{align}
where $i=0,1,\ldots,k$, with multiplicities $m_i:=\qbinom ni-\qbinom n{i-1}$
(see, e.g., Corollary~9.7.1 in \cite{Godsil-Meagher}).
In particular, $\theta_0=q^{k^2}\qbinom{n-k}k$, $m_0=1$, and 
$\theta_0\geq-\theta_1\geq\theta_2\geq-\theta_3\geq\cdots$.
For $0\leq i\leq k$, let $\{v_{i,j}:j\in[m_i]\}$ be an orthonormal basis for the
eigenspace corresponding to $\theta_i$, 
and let $T$ be the adjacency matrix of $K_q(n,k)$. Then $Tv_{i,j}=\theta_iv_{i,j}$,
and $v_{0,1}=\one/\sqrt{|\Omega|}$, where $\one$ is the all ones vector. 

For a function $f:\Omega\to[0,1]$, define $\E f:=|\Omega|^{-1}\sum_{F\in\Omega}f(F)$.
For $\mu:=\E f$ we have $\langle f,\one\rangle=\mu|\Omega|$, and so 
we can write $f=\mu\one+f_*$,
where $\mu=\E f$ and $f_*$ is perpendicular to $\one$. (Here
$\langle\cdot,\cdot\rangle$ denotes the standard inner product, and we define
$|f|_2:=\sqrt{\langle f,f\rangle}$.)
By expanding $f_*=\sum_{i=1}^k\sum_{j\in[m_i]} \lambda_{i,j}v_{i,j}$, we have
\begin{align}\label{eq:9}
|Tf_*|_2^2=\langle Tf_*,Tf_*\rangle =\sum_{i=1}^k\sum_{j\in[m_i]}\theta_i^2\lambda_{i,j}^2\leq\theta_1^2\sum_{i=1}^k\sum_{j\in[m_i]}\lambda_{i,j}^2=\theta_1^2\,|f_*|_2^2.
\end{align}
We also have
\begin{align*}
|f_*|_2^2&=|f-\mu\one|_2^2=\sum_{F\in\Omega}(f(F)-\mu)^2
=\sum_{F\in\Omega}f^2(F)-2\mu\sum_{F\in\Omega}f(F)+\mu^2|\Omega|\\
&=\sum_{F\in\Omega}f^2(F)-\mu^2|\Omega|\leq\sum_{F\in\Omega}f^2(F)
\leq\sum_{F\in\Omega}f(F)=\mu|\Omega|.
\end{align*}
Therefore, we have
\begin{align}\label{eq:10}
 \sum_{F\in\Omega}\left((Tf)(F)-\theta_0\mu\right)^2
=|T(f-\mu\one)|_2^2=|Tf_*|_2^2\leq\theta_1^2|f_*|_2^2\leq\theta_1^2\mu|\Omega|.
\end{align}

\subsubsection{$K_q(ml,l)$}\label{sec:3-d q-K}
Consider the case $n=ml$, $k=l$.
Let $\mu:=\E\one_\AA$, where $\one_\AA$ is the indicator vector of $\AA$.
Let $d_\AA:=T\one_\AA$. Then $d_\AA(F)=(T\one_\AA)(F)=|\{A\in\AA:A\cap F=\{0\}\}|$.
It follows from \eqref{eq:8} that $\theta_0=q^{l^2}\qbinom{(m-1)l}l$ and 
$\frac{|\theta_j|}{\theta_0}\leq\frac{|\theta_1|}{\theta_0}=q^{-l}\qbinom{(m-1)l-1}{l-1}\qbinom{(m-1)l}l^{-1}=q^{-(m-1)l+O_m(1)}$.

\begin{lemma}
\label{lem:14}
Let $\emptyset\neq\AA\subset\Omega$ and $\mu=\E\one_\AA$.
Then, for every fixed $0<\eta<1$, we have
\[
|\EE|  \leq  \eta^{-2}\mu^{-1}q^{-2(m-1)l+O_m(1)}|\Omega|, 
\]  
where $\EE:=\{F\in\Omega:|d_\AA(F)-\theta_0\mu|>\eta\theta_0\mu\}
\supset\{F\in\Omega:d_\AA(F)<(1-\eta)\theta_0\mu\}$. 
\end{lemma}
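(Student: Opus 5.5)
This is a Chebyshev-type bound obtained from the spectral estimate \eqref{eq:10} applied to $f=\one_\AA$. Here $f$ takes values in $\{0,1\}\subset[0,1]$, so \eqref{eq:10} applies and gives
\[
\sum_{F\in\Omega}\bigl(d_\AA(F)-\theta_0\mu\bigr)^2\leq\theta_1^2\mu|\Omega|.
\]
Every $F\in\EE$ contributes more than $(\eta\theta_0\mu)^2$ to the left-hand side. Hence
\[
|\EE|\,\eta^2\theta_0^2\mu^2<\theta_1^2\mu|\Omega|,
\qquad\text{that is,}\qquad
|\EE|\leq\eta^{-2}\mu^{-1}\Bigl(\frac{\theta_1}{\theta_0}\Bigr)^2|\Omega|.
\]
The stated containment of $\EE$ is immediate: if $d_\AA(F)<(1-\eta)\theta_0\mu$, then $\theta_0\mu-d_\AA(F)>\eta\theta_0\mu$.

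It remains to estimate the ratio $|\theta_1|/\theta_0$ for $n=ml$ and $k=l$. By \eqref{eq:8},
\[
\theta_0=q^{l^2}\qbinom{(m-1)l}{l},\qquad
|\theta_1|=q^{l(l-1)}\qbinom{(m-1)l-1}{l-1}.
\]
Their ratio is
\[
\frac{|\theta_1|}{\theta_0}=q^{-l}\qbinom{(m-1)l-1}{l-1}\qbinom{(m-1)l}{l}^{-1}
=q^{-l}\,\frac{q^l-1}{q^{(m-1)l}-1}.
\]
This is because $\qbinom{N}{l}=\frac{q^N-1}{q^l-1}\qbinom{N-1}{l-1}$ with $N=(m-1)l$. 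So the ratio equals $q^{-(m-1)l+O_m(1)}$; in fact it is at most $q^{-(m-1)l}\cdot q/(q-1)$ once $m\geq2$. Squaring gives the factor $q^{-2(m-1)l+O_m(1)}$, which completes the bound.

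There is no real obstacle here. The only points needing care are checking that the eigenvalue ordering $\theta_0\geq-\theta_1\geq\theta_2\geq\cdots$ justifies using $\theta_1^2$ as the maximum of $\theta_i^2$ over $i\geq1$ in \eqref{eq:9}, which the paper already records, and that the nonemptiness of $\AA$ makes $\mu>0$, so that dividing by $\mu$ is legitimate.
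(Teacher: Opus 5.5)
Your proposal is correct and is essentially the paper's proof: apply \eqref{eq:10} to $f=\one_\AA$, observe that each $F\in\EE$ contributes more than $(\eta\theta_0\mu)^2$, and then use $|\theta_1|/\theta_0=q^{-(m-1)l+O_m(1)}$. Your exact evaluation of the ratio as $q^{-l}(q^l-1)/(q^{(m-1)l}-1)$ is just a sharper form of the estimate the paper records immediately before the lemma.
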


\begin{proof}
Apply \eqref{eq:10} with $f=\one_\AA$. Then we have
\[
\sum_{F\in\Omega}(d_\AA(F)-\theta_0\mu)^2\leq\theta_1^2\mu|\Omega|. 
\]
Since each $F\in\EE$ satisfies $|d_\AA(F)-\theta_0\mu|^2>(\eta\theta_0\mu)^2$, it follows that 
\[
|\EE|(\eta\theta_0\mu)^2<\sum_{F\in\EE}|d_\AA(F)-\theta_0\mu|^2\leq\theta_1^2\mu|\Omega|,  
\]
which yields the desired upper bound for $|\EE|$.
\end{proof}

\subsubsection{$K_q(2l,l)$}
Consider the case $n=2l$ and $k=l$.
In this case, we have
$\theta_0=q^{l^2}$ and $\theta_1=-q^{l(l-1)}$.
For a function $h:\Omega\to[0,1]$ and a fixed $A_0\in\Omega$, let $\E[h\mid F\cap A_0=\{0\}]=
q^{-l^2}\sum h(F)$, where the sum is taken over all $F\in\Omega$ with $F\cap A_0=\{0\}$.
Then $(Th)(A_0)=\sum_{F\cap A_0=\{0\}}h(F)$ and 
$\E[h\mid F\cap A_0=\{0\}]={\theta}_0^{-1}(Th)(A_0)$.

\begin{lemma}
\label{lem:15}
Let $K>0$ be a constant.
If $\mu=\E h\geq q^{-l-K}$, then
$|\EE_h|\leq q^{-l+O_K(1)}|\Omega|$, where
$\EE_h:=\{A_0\in\Omega:\E[h\mid F\cap A_0=\{0\}]<\frac12\mu\}$.

In particular, if two functions $h_1,h_2:\Omega\to[0,1]$ satisfy 
$\E h_1,\E h_2\geq q^{-l-K}$, then there is some 
$A_0\in\Omega\setminus(\EE_{h_1}\cup\EE_{h_2})$, that is, 
$\E[h_i\mid F\cap A_0=\{0\}]\geq \frac12\E h_i$ for both $i=1,2$.
\end{lemma}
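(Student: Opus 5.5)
We will mimic the proof of Lemma~\ref{lem:14}, now for the $q$-Kneser graph $K_q(2l,l)$, applying \eqref{eq:10} to the function $h$ in place of an indicator vector. Here $\theta_0=q^{l^2}$ and $\theta_1=-q^{l(l-1)}$, so $\theta_1^2/\theta_0^2=q^{-2l}$. Put $\mu=\E h$. For every $A_0\in\EE_h$ we have $(Th)(A_0)=\theta_0\E[h\mid F\cap A_0=\{0\}]<\frac12\theta_0\mu$, and therefore
\[
\bigl((Th)(A_0)-\theta_0\mu\bigr)^2>\tfrac14\theta_0^2\mu^2 .
\]
Summing over $A_0\in\EE_h$ and using \eqref{eq:10} gives
\[
|\EE_h|\cdot\tfrac14\theta_0^2\mu^2\leq\sum_{F\in\Omega}\bigl((Th)(F)-\theta_0\mu\bigr)^2\leq\theta_1^2\mu|\Omega|.
\]
Hence $|\EE_h|\leq 4\mu^{-1}q^{-2l}|\Omega|$. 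Inequality \eqref{eq:10} was derived for an arbitrary $f:\Omega\to[0,1]$: it only used $f^2\leq f$ and the spectral bound \eqref{eq:9}. So it applies to $h$ without change.

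Next we insert the lower bound on $\mu$. Since $\mu\geq q^{-l-K}$, we have $\mu^{-1}\leq q^{l+K}$, and so
\[
|\EE_h|\leq 4q^{K}q^{-l}|\Omega|=q^{-l+O_K(1)}|\Omega|.
\]
This is the first assertion. The only point needing care is that the exponent $-2l$ coming from $\theta_1^2/\theta_0^2$ beats the loss $q^{l+K}$ from $\mu^{-1}$. This is precisely why the threshold $\mu\geq q^{-l-K}$ is the natural one.

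For the ``in particular'' part, apply the bound to $h_1$ and $h_2$ and take a union bound:
\[
|\EE_{h_1}\cup\EE_{h_2}|\leq 8q^{K}q^{-l}|\Omega|.
\]
For $l$ sufficiently large in terms of $K$ and $q$, this is less than $|\Omega|$, so some $A_0\in\Omega\setminus(\EE_{h_1}\cup\EE_{h_2})$ exists. By the definition of $\EE_{h_i}$, this $A_0$ satisfies $\E[h_i\mid F\cap A_0=\{0\}]\geq\frac12\E h_i$ for both $i=1,2$. The implicit requirement that $l$ be large (depending on $K$) is harmless in the paper's asymptotic setting. I expect no real obstacle beyond checking that \eqref{eq:10} holds for $[0,1]$-valued functions, which it does.
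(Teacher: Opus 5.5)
Your proof is correct and follows the paper's argument: apply \eqref{eq:10} to $h$ on $K_q(2l,l)$, obtain $|\EE_h|<4\theta_1^2|\Omega|/(\theta_0^2\mu)$, use $|\theta_1|/\theta_0=q^{-l}$ with $\mu\geq q^{-l-K}$, and finish with a union bound over $h_1,h_2$ for large $l$. Your remark that \eqref{eq:10} needs only $0\leq f\leq 1$ and \eqref{eq:9} is also exactly why the paper can apply it to a $[0,1]$-valued $h$.
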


\begin{proof}
Apply \eqref{eq:10} with $f=h$. Then we have
$\sum_{A_0\in\Omega}\left((Th)(A_0)-\theta_0\mu\right)^2 \leq\theta_1^2\mu|\Omega|$.

Each $A_0\in\EE_h$ satisfies $\theta_0^{-1}(Th)(A_0)<\frac12\mu$,
that is, $(Th)(A_0)-\theta_0\mu<-\frac12\mu\theta_0$. Thus, 
\[
\left(\tfrac12\theta_0\mu\right)^2|\EE_h|<
\sum_{A_0\in\Omega}((Th)(A_0)-\theta_0\mu)^2\leq\theta_1^2\mu|\Omega|,
\]
and so $|\EE_h|<\frac{4\theta_1^2|\Omega|}{\theta_0^2\mu}$.
Using $\mu\geq q^{-l-K}$ and $\theta_1/\theta_0=q^{-l}$, 
we get $|\EE_h|<q^{-l+O_K(1)}|\Omega|$.

Since $\E h_i\geq q^{-l-K}$, we have $|\EE_{h_i}|<q^{-l+O_K(1)}|\Omega|$, and
$|\EE_{h_1}\cup\EE_{h_2}|<2q^{-l+O_K(1)}|\Omega|<|\Omega|$ if $l$ is sufficiently large.
Then there is some $A_0\in\Omega\setminus(\EE_{h_1}\cup\EE_{h_2})$.
\end{proof}

The next lemma will be used in the proof of Theorem~\ref{thm:7}.

\begin{lemma}\label{lem:16}
Let $N=|\Omega|$. If $\AA,\BB\subset\Omega$ are $2$-cross union, then
$q^{2l}|\AA||\BB|\leq(N-|\AA|)(N-|\BB|)$.
Moreover, if $|\AA|=|\BB|=\qbinom{2l-1}l$, then $\AA=\BB$.
\end{lemma}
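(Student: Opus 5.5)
The plan is the standard spectral (Hoffman-type) bound for cross-independent sets in the $q$-Kneser graph $K_q(2l,l)$. First, since $\dim A=\dim B=l$ and $\dim V_{2l}=2l$, the condition $A+B\neq V_{2l}$ is equivalent to $A\cap B\neq\{0\}$. Hence $\AA,\BB$ being $2$-cross union means that no $A\in\AA$ is adjacent to any $B\in\BB$ in $K_q(2l,l)$, i.e.\ $\langle \one_\AA,T\one_\BB\rangle=0$.

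Write $\alpha=|\AA|/N$, $\beta=|\BB|/N$, and decompose $\one_\AA=\alpha\one+f$ and $\one_\BB=\beta\one+g$ with $f,g\perp\one$. As in the computation preceding \eqref{eq:10}, we have $|f|_2^2=\alpha(1-\alpha)N$ and $|g|_2^2=\beta(1-\beta)N$. Expanding gives
\[
0=\langle \one_\AA,T\one_\BB\rangle=\theta_0\alpha\beta N+\langle f,Tg\rangle .
\]
Expanding $f,g$ in the orthonormal eigenbasis $\{v_{i,j}\}$ and using Cauchy--Schwarz together with $|\theta_i|\le|\theta_1|$ for $i\ge1$, we get $\langle f,Tg\rangle\ge-|\theta_1|\,|f|_2|g|_2$. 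Therefore
\[
\theta_0\alpha\beta\le|\theta_1|\sqrt{\alpha(1-\alpha)\beta(1-\beta)} .
\]
Squaring, and using $\theta_0=q^{l^2}$ and $|\theta_1|=q^{l(l-1)}$ (so $\theta_0/|\theta_1|=q^l$), yields $q^{2l}\alpha\beta\le(1-\alpha)(1-\beta)$. Multiplying by $N^2$ gives the inequality. If $\alpha$ or $\beta$ is $0$ or $1$, the claim is trivial, since a $2$-cross union pair cannot contain all of $\Omega$ together with a nonempty family.

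For the equality part, note the $q$-Pascal identity $\qbinom{2l}{l}=q^{l}\qbinom{2l-1}{l-1}+\qbinom{2l-1}{l}$ and the symmetry $\qbinom{2l-1}{l-1}=\qbinom{2l-1}{l}$. These give $N-\qbinom{2l-1}l=q^l\qbinom{2l-1}l$. So $|\AA|=|\BB|=\qbinom{2l-1}l$ is exactly the equality case of the inequality, and every step in the chain above must be tight.

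The main point to check is what tightness forces. Tightness in $\sum_i\theta_i\langle f,v_{i,j}\rangle\langle g,v_{i,j}\rangle\ge-|\theta_1|\sum|\cdots|$ requires $f$ and $g$ to be supported on eigenspaces with $|\theta_i|=|\theta_1|$. From \eqref{eq:8}, $|\theta_2|/|\theta_1|=q^{1-l}\cdot\qbinom{l-2}{l-2}/\qbinom{l-1}{l-1}\cdot$(a ratio that is at most $1$), so $|\theta_2|<|\theta_1|$ for $l\ge2$; for $l=1$ the statement is checked directly. Hence $f,g$ lie in the $\theta_1$-eigenspace, where $\theta_1<0$. Then $\langle f,Tg\rangle=\theta_1\langle f,g\rangle=-|\theta_1|\langle f,g\rangle$, and tightness in Cauchy--Schwarz forces $g=cf$ with $c\ge0$. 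Since $\alpha=\beta$, we have $|f|_2=|g|_2$, so $c=1$ and $f=g$. This gives $\one_\AA=\one_\BB$, that is, $\AA=\BB$.
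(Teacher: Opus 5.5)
Your proof is correct and is essentially the paper's argument: the Hoffman-type bound $\theta_0\alpha\beta N=-\langle f,Tg\rangle\le|\theta_1|\,|f|_2|g|_2$ in $K_q(2l,l)$, the identity $N-\qbinom{2l-1}{l}=q^l\qbinom{2l-1}{l}$ showing that the given sizes are the equality case, and then eigenspace and Cauchy--Schwarz tightness together with $|f|_2=|g|_2$ to get $f=g$. One step should be stated more carefully. Tightness of the coefficientwise eigenvalue bound alone only forces the \emph{products} of the coefficients of $f$ and $g$ to vanish outside the $\theta_1$-eigenspace. You need to combine it with the Cauchy--Schwarz tightness to conclude that both $f$ and $g$ lie in the $\theta_1$-eigenspace. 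Alternatively, as the paper does, use $|Tg|_2\le|\theta_1|\,|g|_2$ to get $Tg=\theta_1 g$ directly. Also, $|\theta_i|<|\theta_1|$ for every $i\ge 2$ follows because $|\theta_i|=q^{\binom i2+l(l-i)}$ is strictly decreasing in $i$; comparing $\theta_2$ with $\theta_1$ alone does not cover all $i\ge 2$.
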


\begin{proof}
Let $T$ be the adjacency matrix of $K_q(2l,l)$. Recall that 
$Tv_{i,j}=\theta_iv_{i,j}$ for $i=0,1,\ldots,l$ and $j\in[m_i]$.
Let $f=\one_\AA$, and write $f=\mu_\AA\one+f_*$, where $\mu_\AA=\E f$ and $\langle f_*,\one\rangle=0$.
Then, $|\AA|=\langle f,f\rangle=\mu_\AA^2N+|f_*|_2^2$,
and so $|f_*|_2^2=|\AA|-|\AA|^2/N$.
In the same way, for $g=\one_\BB=\mu_\BB\one+g_*$, we have $|g_*|_2^2=|\BB|-|\BB|^2/N$.
By 2-cross union condition, 
$0=\langle f,Tg\rangle=\langle\mu_\AA\one+f_*,\mu_\BB\theta_0\one+Tg_*\rangle=
\theta_0\mu_\AA\mu_\BB N+\langle f_*,Tg_*\rangle$.
Thus, using \eqref{eq:9} we have
\begin{align}\label{eq:11}
 q^{l^2}\frac{|\AA||\BB|}N=\theta_0\mu_\AA\mu_\BB N
=-\langle f_*,Tg_* \rangle\leq|\theta_1| |f_*|_2 |g_*|_2
=q^{l^2-l}\sqrt{|\AA|-\frac{|\AA|^2}N}\sqrt{|\BB|-\frac{|\BB|^2}N}.
\end{align}
Rewriting this yields the desired inequality.

Next, we further assume that $|\AA|=|\BB|=M:=\qbinom{2l-1}l$, and so $|f|_2=|g|_2$,
$|f_*|_2=|g_*|_2$.
In this case all the terms in \eqref{eq:11} coincide, and 
$\langle f_*,Tg_* \rangle= \theta_1|f_*|_2 |g_*|_2$.
Since $|\theta_1|>|\theta_i|$ for $i\geq 2$, we have $Tg_*=\theta_1g_*$.
This yields $\langle f_*,\theta_1 g_*\rangle=\theta_1|f_*|_2|g_*|_2$, and
$\langle f_*,g_*\rangle=|f_*|_2|g_*|_2$. Thus it follows from equality in the
Cauchy-Schwarz inequality that $f_*$ is a positive multiple of $g_*$.
Moreover, since $|f_*|_2=|g_*|_2$, we have $f_*=g_*$. 
Then, since $|f|_2=|g|_2$, we get $\mu_\AA=\mu_\BB$, 
and finally, $f=g$, or equivalently, $\AA=\BB$.
\end{proof}

\subsection{Large cross union subfamilies in a quotient space}
\label{sec:degenerate subsp}
Let $V=\F_q^{ml}$ and $\Omega=\qbinom Vl$.
Let $D\in\Omega$ and $\ol V=V/D$.
For $\FF\subset\Omega$ and a quotient map
$\pi_D:V\to\ol V$,
we define
\begin{align}\label{eq:12}
\FF(D):=\{F\in\FF:F\cap D=\{0\}\},\quad
\ol\FF^D:=\{\pi_D(F):F\in\FF(D)\}.
\end{align}
Recall that $\theta_0=q^{l^2}\qbinom{(m-1)l}l$ is the largest eigenvalue of $K_q(ml,l)$.
\begin{lemma}\label{lem:17}
Fix $K>0$ and $m\geq 3$.
Let $\FF_1,\ldots,\FF_m\subset\Omega$ be $m$-cross union families.
Suppose that for each $i\in[m]$, 
$\mu_i:=|\FF_i|/|\Omega|\geq q^{-\delta_il-c}$, where $\delta_i\geq 0$, 
$\sum_{i\in[m]}\delta_i\leq m$, and $c=c(m,K)$ is a fixed constant. 
Then, for every $s\in[m]$, there exists $\GG_s\subset\FF_s$ with
$|\GG_s|=(1-o(1))|\FF_s|$ such that 
$|\FF_i(D)|\geq \frac12\theta_0\mu_i$ and
$|\ol\FF_i^D|\geq q^{-l^2}|\FF_i(D)|$ 
for all $D\in\GG_s$ and all $i\in [m]\setminus\{s\}$.
Moreover, the families $\ol\FF_i^D\subset\qbinom{\ol V}l$, where
$i\in [m]\setminus\{s\}$, are $(m-1)$-cross union.
\end{lemma}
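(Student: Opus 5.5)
For each $i\neq s$ I will define a bad set $\EE_i\subset\Omega$ as in Lemma~\ref{lem:14} (with $\AA=\FF_i$ and $\eta=\frac12$), namely the set of $D$ with $d_{\FF_i}(D)<\frac12\theta_0\mu_i$. Then I put $\GG_s:=\FF_s\setminus\bigcup_{i\neq s}\EE_i$. Note that $d_{\FF_i}(D)=|\FF_i(D)|$, so every $D\in\GG_s$ satisfies the first required inequality $|\FF_i(D)|\geq\frac12\theta_0\mu_i$ by construction.

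The size estimate comes from Lemma~\ref{lem:14}:
\[
|\EE_i|\leq 4\mu_i^{-1}q^{-2(m-1)l+O_m(1)}|\Omega|.
\]
To get $|\GG_s|=(1-o(1))|\FF_s|$ it suffices to show $|\EE_i|=o(\mu_s|\Omega|)$, i.e.\ $\mu_i\mu_s\gg q^{-2(m-1)l+O(1)}$. Using the hypothesis, $\mu_i\mu_s\geq q^{-(\delta_i+\delta_s)l-2c}$ with $\delta_i+\delta_s\leq\sum_j\delta_j\leq m$. So the ratio $|\EE_i|/|\FF_s|$ is at most
\[
q^{(m-2(m-1))l+O(1)}=q^{-(m-2)l+O(1)},
\]
which tends to $0$ since $m\geq 3$. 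Summing over the $m-1$ indices $i\neq s$ keeps the loss $o(1)$. This is precisely where $m\geq3$ and $\sum\delta_i\leq m$ are used, and I expect this exponent bookkeeping to be the only delicate point.

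For the second inequality, I will study the map $F\mapsto\pi_D(F)$ on $\FF_i(D)$. Since $F\cap D=\{0\}$, the image $\pi_D(F)$ is an $l$-subspace of $\ol V$. By \eqref{eq:5}, each $\ol F\in\qbinom{\ol V}l$ has exactly $q^{\dim D\cdot l}=q^{l^2}$ lifts. Hence the map is at most $q^{l^2}$-to-one, and $|\ol\FF_i^D|\geq q^{-l^2}|\FF_i(D)|$ holds for every $D$ (indeed every $D\in\Omega$).

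Finally, for the cross union property, take $\ol F_i\in\ol\FF_i^D$ with $i\neq s$. Choose lifts $F_i\in\FF_i(D)$ with $\pi_D(F_i)=\ol F_i$. Suppose $\sum_{i\neq s}\ol F_i=\ol V$. Then $\sum_{i\neq s}F_i+D=\pi_D^{-1}(\ol V)=V$. But $D\in\GG_s\subset\FF_s$, so this gives $m$ members $F_i\in\FF_i$ ($i\neq s$) together with $D\in\FF_s$ whose sum is $V$, contradicting the $m$-cross union hypothesis. Therefore the families $\ol\FF_i^D$, $i\in[m]\setminus\{s\}$, are $(m-1)$-cross union.
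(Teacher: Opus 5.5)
Your proposal is correct and follows the paper's proof step for step: the bad sets come from Lemma~\ref{lem:14} with $\eta=\frac12$, the bound $\delta_i+\delta_s\le m$ gives $|\EE_i|\le q^{-(m-2)l+O(1)}|\FF_s|$, the lift count \eqref{eq:5} gives $|\ol\FF_i^D|\ge q^{-l^2}|\FF_i(D)|$, and lifting a spanning configuration in $\ol V$ contradicts the $m$-cross union property. The only cosmetic difference is that the paper first renumbers so that $s=m$.
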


\begin{proof}
By renumbering if necessary, we may assume that $s=m$.
For $i\in[m-1]$, let $d_i(D):=|\FF_i(D)|$.
Then, by Lemma~\ref{lem:14} with $\eta=\frac12$, we have
\begin{align*}
&|\{D\in\Omega:d_i(D)<\frac12\theta_0\mu_i\}|
\leq\mu_i^{-1}q^{-2(m-1)l+O_{m,K}(1)}|\Omega|
=\mu_i^{-1}q^{-2(m-1)l+O_{m,K}(1)}\mu_m^{-1}|\FF_m|\\
&\qquad\qquad\leq q^{-(2m-2-\delta_i-\delta_m)l+O_{m,K}(1)}|\FF_m|
\leq q^{-(m-2)l+O_{m,K}(1)}|\FF_m|=o(|\FF_m|).
\end{align*}
Thus $\GG_m:=\bigcap_{i\in[m-1]}\{D\in\FF_m:d_i(D)\geq\frac12\theta_0\mu_i\}$
has size $(1-o(1))|\FF_m|$.
Then, for $D\in\GG_m$ and $i\in[m-1]$, 
it follows that $d_i(D)\geq\frac12\theta_0\mu_i$.
By \eqref{eq:5} every $l$-subspace of $\ol V$ has $q^{l^2}$ lifts,
and $|\ol\FF_i^D|\geq q^{-l^2}d_i(D)$.

If $\ol\FF_i^D$ ($i\in[m-1]$) are not $(m-1)$-cross union, then we have
$\ol F_i=\pi_D(F_i)\in\ol\FF_i^D$ such that $\ol F_1+\cdots+\ol F_{m-1}=\ol V$.
Then, we have $F_1+\cdots+F_{m-1}+D=V$, contradicting the $m$-cross union property. Therefore the families $\ol\FF_i^D$ ($i\in[m-1]$) are $(m-1)$-cross union.
\end{proof}

\subsection{EKL junta theorem and its application}
Let $X,Y$ be vector spaces over $\F_q$ with $n=\dim X=\dim Y$.
We say that $\HH\subset\Hom(X,Y)$ is intersecting if 
$\dim\{x\in X:Mx=Nx\}\ge1$ for all $M,N\in\HH$.
We say that $\HH_1,\HH_2\subset\Hom(X,Y)$ are cross intersecting if 
$\dim\{x\in X:Mx=Nx\}\ge1$, or equivalently, $\ker(M-N)\neq\{0\}$, for all $M\in\HH_1,N\in\HH_2$.

A cell $\CC=\CC(S,T;\Pi,\pi)$ in $\Hom(X,Y)$ is the set of the form
\[
 \CC(S,T;\Pi,\pi)=\{f\in\Hom(X,Y):f|_S=\Pi,\,f^*|_T=\pi\},
\]
where $S\leq X$, $T\leq Y^*$, $\Pi:S\to Y$, and $\pi:T\to X^*$.
If $\CC$ is a non-empty cell, then $|\CC|=q^{(n-\dim S)(n-\dim T)}$.
The complexity of $\CC$ is defined by $\dim S+\dim T$. 
A complexity 0 cell is the whole space $\Hom(X,Y)$.
An $s$-bounded junta is a
union of at most $s$ cells, each of complexity at most $s$. By $s$-bounded, we
always mean that $s$ is a constant depending only on the fixed junta parameter and
$q$ (the order of the field).

We use the following version of a result obtained by Ellis, Kindler, and Lifshitz in \cite{EKL}.

\begin{lemma}\label{lem:18}
For every junta parameter $R\geq 1$ there are constants $C_R=C_R(q)$ and $s_R=s_R(q)$
such that the following holds for all sufficiently large $m$. Let $X,Y$ be $m$-dimensional vector spaces and let
$\HH\subset\Hom(X,Y)$ be intersecting.
Then there is an intersecting $s_R$-bounded junta $J\subset\Hom(X,Y)$ such that
\[
  |\HH\setminus J|
  \leq C_Rq^{-Rm}|\Hom(X,Y)|.
\]
\end{lemma}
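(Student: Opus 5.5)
The plan is to derive Lemma~\ref{lem:18} from the structural (``regularity plus junta'') theorem of Ellis, Kindler, and Lifshitz~\cite{EKL} for families of linear maps, followed by a cleaning step that makes the resulting junta intersecting. First I will fix $R$ and apply the EKL regularity lemma with a larger parameter $R'=R'(R,q)$. This should give a decomposition of $\Hom(X,Y)$ into a bounded number of cells $\CC_1,\ldots,\CC_t$ of bounded complexity $s$, together with an exceptional set of measure at most $q^{-R'm}$. On each $\CC_j$, the trace $\HH\cap\CC_j$ is either sparse, meaning $|\HH\cap\CC_j|\leq q^{-R'm}|\CC_j|$, or \emph{global}. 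Here global means that restricting further to any subcell of bounded extra complexity $d$ increases the relative density by at most a factor $q^{O(d)}$. The number of cells and their complexities are bounded in terms of $R'$ and $q$ only, as the lemma requires.

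Next I will define $J$ to be the union of the cells $\CC_j$ on which $\HH$ is dense and global. Everything discarded lies either in the exceptional set or in sparse cells. Since there are boundedly many cells, this gives $|\HH\setminus J|\leq C_Rq^{-Rm}|\Hom(X,Y)|$ once $R'\geq R$. It also gives that $J$ is $s_R$-bounded.

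The remaining and main step is to show that $J$ is intersecting. Suppose instead that there are cells $\CC=\CC(S,T;\Pi,\pi)$ and $\CC'=\CC(S',T';\Pi',\pi')$ in $J$ (possibly equal) and maps $f\in\CC$, $g\in\CC'$ with $f-g$ invertible. I will use this ``witness'' to produce $M\in\HH\cap\CC$ and $N\in\HH\cap\CC'$ with $\ker(M-N)=\{0\}$, contradicting that $\HH$ is intersecting.

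The idea is to fix the bounded-dimensional data $S+S'$ and $T+T'$. I will pass to subcells of $\CC$ and $\CC'$ of bounded complexity that agree with $f$ and $g$ on $S+S'$ (and dually on $T+T'$). The constraints must be chosen so that $M-N$ is forced to be injective on the bounded part. On the complementary subspaces, the condition that $M-N$ be invertible then becomes a condition on a pair of essentially unconstrained maps between spaces of dimension $m-O(1)$.

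By globalness, $\HH$ still has density at least $q^{-O(R')m}\cdot q^{-O(1)}$ inside each such subcell, with no loss from the restriction beyond the factor $q^{O(1)}$. So it remains to show a cross-version statement: two global families $\AA,\BB\subset\Hom(X_0,Y_0)$ of this density contain $M,N$ with $M-N$ invertible. For this I would use the EKL spectral argument on the ``derangement'' Cayley graph, in which $M,N$ are adjacent if and only if $M-N$ is invertible. Globalness of $\AA$ and $\BB$ should suppress the contribution of the low-level (non-trivial) eigenspaces, while the trivial eigenvalue dominates because invertible maps have density bounded below by a constant $c_q>0$. This produces a positive number of cross edges.

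I expect this last counting step to be the main obstacle. It requires a level-$d$ inequality for global functions on $\Hom(X,Y)$ that is sharp enough to handle densities as small as $q^{-R'm}$. This is exactly where EKL's hypercontractivity for global functions is needed. I would invoke their result directly rather than reprove it, and then check that their constants depend only on $R$ and $q$, so that ``sufficiently large $m$'' is uniform.
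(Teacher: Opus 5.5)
There is a genuine gap. The paper does not reprove this lemma: it is Theorem~2 of \cite{EKL} specialized to $t=1$, $r=R$. An intersecting family is $0$-intersection-free, and the dimension hypothesis is automatic since $\dim X=\dim Y=m$. The output strongly $1$-intersecting $(C_R,R)$-junta is in particular intersecting, and one takes $s_R=\max\{C_R,R\}$.

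Your attempt to rebuild that theorem from its ingredients fails at step~1, because the regularity lemma you state cannot hold. A nonempty cell with $\dim S=a$, $\dim T=b$ has relative measure $q^{-(a+b)m+ab}\le q^{-\max(a,b)m}$. So every cell of complexity at least $1$ has measure at most $q^{-m}$. Boundedly many bounded-complexity cells plus an exceptional set of measure $q^{-R'm}$ can therefore cover $\Hom(X,Y)$ only if one of the cells is the whole space. Its trace is then $\HH$ itself. Already for the star $\HH=\{M:Mu=v\}$, this is neither sparse (density $q^{-m}>q^{-R'm}$) nor global (density $1$ on a complexity-$1$ subcell).

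Reading it instead as a bounded cover of $\HH$ with sparse or global pieces fails for general families. The union of $q^{m/2}$ value cells $\{M:Mu_i=v_i\}$ with pairwise independent $u_i$ has density about $q^{-m/2}$, and no bounded collection of bounded-complexity cells captures it this way. So the bound on the number of cells must come from the intersecting hypothesis, which your step~1 never uses.

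Pairwise compatibility alone does not give it either. The $q^m$ cells $\{M:Mu_1=v_1,\ Mu_2=w\}$, $w\in Y$, are pairwise cross intersecting and must be merged into $\{M:Mu_1=v_1\}$. Getting a bounded, merged junta out of intersectingness is a central part of the EKL argument, and your sketch simply asserts its conclusion.

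A second, repairable flaw is in step~3. Globalness as you define it is an upper bound on density increments. It gives no lower bound in the specific subcell cut out by $f|_{S+S'}$ and $g^*|_{T+T'}$, and that subcell may miss $\HH$ entirely. You would instead have to choose the restriction data by averaging. The pieces obtained by fixing the data partition $\CC$ into equal-size subcells, each of density at most $q^{O(1)}\mu$, so a $q^{-O(1)}$-fraction of them has density at least $\mu/2$. The same holds for $\CC'$. Incompatible pairs of data form only a $q^{-m+O(1)}$-fraction, so a compatible good pair exists. You then still need to handle the Schur-complement shift in the reduced cross problem.

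Step~1, however, cannot be repaired short of reproving EKL's Theorem~2. For this lemma, the adequate proof is the direct citation with the parameter translation above.
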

This is Theorem~2 of \cite{EKL} specialized to $t=1$ and $r=R$. An intersecting family
is $0$-intersection free, and the dimension condition is automatic because 
$\dim X=\dim Y=m$. The theorem gives a strongly $1$-intersecting $(C_R,R)$-junta,
which is in particular intersecting. Taking $s_R=\max\{C_R,R\}$ gives the formulation
above.

\begin{lemma}\label{lem:19}
 Fix $s>0$. The number of $s$-bounded juntas in $\Hom(X,Y)$ is at most
$q^{O_{q,s}(n)}$.
\end{lemma}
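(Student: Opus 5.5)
The plan is a direct count: enumerate all possible cells of bounded complexity, then count the ways to choose at most $s$ of them. Identify $\Hom(X,Y)$ with $n\times n$ matrices (here $n=\dim X=\dim Y$) and bound the number of cells $\CC(S,T;\Pi,\pi)$ of complexity at most $s$. Such a cell is determined by four choices.
\begin{itemize}
\item[(a)] A subspace $S\leq X$ with $\dim S=a\leq s$. The number of these is $\qbinom na\leq q^{a(n-a)+O(1)}\leq q^{sn+O_s(1)}$.
\item[(b)] A subspace $T\leq Y^*$ with $\dim T=b\leq s-a$. Likewise there are at most $q^{sn+O_s(1)}$ of these.
\item[(c)] A linear map $\Pi:S\to Y$. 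There are $|\Hom(S,Y)|=q^{an}\leq q^{sn}$ choices.
\item[(d)] A linear map $\pi:T\to X^*$. There are $q^{bn}\leq q^{sn}$ choices.
\end{itemize}
Different data may define the same cell, or an empty cell, but this only helps an upper bound. Summing over the $O(s^2)$ choices of $(a,b)$, the number $N_s$ of cells of complexity at most $s$ satisfies
\[
N_s\leq O(s^2)\,q^{4sn+O_s(1)}=q^{O_{q,s}(n)}.
\]

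An $s$-bounded junta is a union of at most $s$ such cells, so it is determined by a subset of at most $s$ cells. The number of $s$-bounded juntas is therefore at most
\[
\sum_{j=0}^{s}\binom{N_s}{j}\leq (N_s+1)^s=q^{O_{q,s}(n)}.
\]
Here $s$ is a constant depending only on the junta parameter and $q$, so the implicit constant depends only on $q$ and $s$, as the statement requires.

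There is no real obstacle. The only point needing care is the estimate $\qbinom na\leq c_q^{-1}q^{a(n-a)}$ from the Gaussian binomial lemma in \S\ref{sec:q-binom}, applied with $a\leq s$ fixed, which makes the subspace count polynomial in $q^n$. One also has to note that over-counting cells through non-unique representations is harmless.
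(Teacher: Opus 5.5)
Your proposal is correct and follows the paper's proof: both count cells of complexity at most $s$ through the choices of $S$, $T$, $\Pi$, $\pi$, getting $N=q^{O_{q,s}(n)}$ cells, and then bound the number of juntas by $\sum_{i\le s}\binom Ni$. The only difference is that the paper counts $\pi$ as $q^{b(n-a)}$ using the compatibility $\pi(t)(x)=t(\Pi x)$ on $S$, while your cruder bound $q^{bn}$ works just as well.
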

\begin{proof}
 We first count the number of cells $\CC(S,T;\Pi,\pi)\subset\Hom(X,Y)$
with $\dim S=a$, $\dim T=b$, and $a+b\leq s$.
We have $\qbinom na$ choices for $S$, and $\qbinom nb$ choices for $T$.
For fixed $(S,T)$, we have $q^{an}$ choices for $\Pi$, and
$q^{b(n-a)}$ choices for $\pi$, where we used $\pi(t)(x)=t(\Pi x)$ 
for $x\in S$, $t\in T$.
Thus the number of complexity at most $s$
cells is at most $N:=\sum_{a,b\geq 0,\,a+b\leq s}\qbinom na\qbinom nbq^{an}q^{b(n-a)}\leq q^{O_{q,s}(n)}$. An $s$-bounded junta is a union of at most $s$ 
such cells, and the total number is at most 
$\sum_{i=0}^s\binom Ni\leq q^{O_{q,s}(n)}$.
\end{proof}

The next proposition is one of the main tools in this paper.

\begin{proposition}\label{prop:20}
Fix $K>0$. There exist $L_0=K+O(1)$ and $n_0(K,q)$ such that the following holds for all 
$n\geq n_0(K,q)$. Let $A,B$ be $n$-dimensional vector spaces and let 
$f,g:\Hom(B,A)\to[0,1]$ satisfy $\E f,\E g\geq q^{-n-K}$.
Suppose also that $\ker(M-N)\neq\{0\}$ whenever $f(M)g(N)>0$.
Then one of the following items holds.
\begin{enumerate}
\item[\rm (i)] There are $0\ne u\in B$ and $v\in A$ such that
\[
\E[f\mid Mu=v]\geq q^{-L_0}, \quad
\E[g\mid Nu=v]\geq q^{-L_0}.
\]
\item[\rm (ii)] There are $0\ne\phi\in A^*$ and $\psi\in B^*$ such that
\[
\E[f\mid \phi\circ M=\psi]\geq q^{-L_0}, \quad
\E[g\mid \phi\circ N=\psi]\geq q^{-L_0}.
\]
\end{enumerate}
\end{proposition}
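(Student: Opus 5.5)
The plan is to derive Proposition~\ref{prop:20} from the junta theorem, Lemma~\ref{lem:18}, applied with a fixed junta parameter such as $R=2$. Write $\mu_f=\E f$ and $\mu_g=\E g$. First I pass to level sets:
\[
\FF=\{M: f(M)\geq \mu_f/2\},\qquad \GG=\{N: g(N)\geq\mu_g/2\}.
\]
Since $f\leq 1$, at least half of the mass of $f$ sits on $\FF$, so $|\FF|\geq(\mu_f/2)|\Hom(B,A)|$, and similarly for $\GG$. Moreover $\FF$ and $\GG$ are cross intersecting.

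Lemma~\ref{lem:18} is stated for a single intersecting family, so I need a cross version. My first choice is to note that the EKL argument goes through verbatim for cross-intersecting pairs. As a fallback I would try an embedding trick, for instance $\FF\times\GG$ placed block-diagonally inside $\Hom(B\oplus B, A\oplus A)$ with suitable off-diagonal padding. Either way, the goal is bounded juntas $J_f,J_g$ that are cross intersecting, with
\[
|\FF\setminus J_f|,\ |\GG\setminus J_g|\leq C q^{-2n}|\Hom(B,A)|.
\]
Because $|\FF|,|\GG|\geq q^{-n-K-1}|\Hom(B,A)|$, the error is negligible once $n\geq n_0(K,q)$. So all but a $q^{-n+K+O(1)}$ fraction of $\FF$ lies in $J_f$, and likewise for $\GG$.

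Next I would analyse the cells. A cell of complexity $c$ has density $q^{-cn+O(c^2)}$. Since $J_f$ has at most $s=s_2(q)$ cells, the cells of complexity at least $2$ carry at most $sq^{-2n+O(1)}$ of the measure, which is again negligible against $q^{-n-K}$. A complexity-$0$ cell is the whole space, and cross intersection then forces $\GG\subset J_g$ to be a null set. This is impossible, because a nonempty cross-intersecting partner of $\Hom(B,A)$ would need every $N$ to have $\ker(M-N)\neq 0$ for all $M$, and taking $M=N+I$ gives a contradiction. Hence at least a $1/(2s)$ fraction of $|\FF|$ lies in a single complexity-$1$ cell $\CC_f$ of $J_f$. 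Such a cell is either $\{M:Mu=v\}$ with $u\neq0$ or $\{M:\phi\circ M=\psi\}$ with $\phi\neq0$. The same holds for some cell $\CC_g$ of $J_g$.

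The key combinatorial step, and the one I expect to be the main obstacle, is showing that two cross-intersecting complexity-$1$ cells must impose the \emph{same} constraint. Concretely, if $\CC_f=\{Mu=v\}$, I want $\CC_g=\{Nu=v\}$ for the same $u$ and $v$, and dually for $\phi$ and $\psi$. Otherwise I would exhibit $M\in\CC_f$ and $N\in\CC_g$ with $M-N$ invertible, contradicting cross intersection. This is a case check, done by counting or by an explicit construction:
\begin{itemize}
\item different $u$ and $u'$;
\item the same $u$ but $v\neq v'$;
\item one row-type and one column-type constraint;
\item different $\phi$ and $\phi'$, or $\psi\neq\psi'$.
\end{itemize}
In each case the constraint on the difference $M-N$ is at most a single affine condition that does not force a kernel. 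A random matrix satisfying one such affine condition is invertible with probability bounded below, so the required pair exists. One caveat is that only pairs of cells of $J_f$ and $J_g$ need to be considered, since cross intersection of the juntas gives it for every such pair.

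Finally, suppose case~(i) holds with the common constraint $Mu=v$. Using \eqref{eq:6}, the fibre $\{Mu=v\}$ has density exactly $q^{-n}$. Then
\[
\E[f\mid Mu=v]\geq q^{n}\cdot\frac{\mu_f}{2}\cdot\frac{|\FF\cap\CC_f|}{|\Hom(B,A)|}\geq q^{n}\cdot\frac{\mu_f^2}{8s}\geq q^{-n-2K-O(1)}\cdot q^{n}.
\]
This last bound loses an extra factor of $\mu_f$, so I would avoid the level set in this final count. Instead I would apply the cell argument directly to the mass of $f$: the mass of $f$ outside $J_f$ is at most $|\FF^c\text{-part}|\cdot\mu_f/2$ plus the junta error, so at least $\mu_f/(4s)$ of the $f$-mass lies in $\CC_f$. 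This gives
\[
\E[f\mid Mu=v]\geq q^{n}\mu_f/(4s)\geq q^{-K-O(1)},
\]
and likewise for $g$. This yields $L_0=K+O(1)$. The dual case~(ii) is identical, using \eqref{eq:7}.
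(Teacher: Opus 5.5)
Everything after the junta step in your outline is essentially Claims~\ref{claim:40} and~\ref{claim:41} of the paper and is fine: excluding complexity-$0$ cells, discarding cells of complexity at least $2$, pigeonholing onto a complexity-$1$ cell, matching the two complexity-$1$ cells, and the final count using the mass of $f$ rather than the level set.

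The genuine gap is the step you pass over quickly. Lemma~\ref{lem:18} concerns a \emph{single} intersecting family, and you need cross-intersecting juntas $J_f,J_g$. You cannot simply assert that the EKL argument goes through verbatim; that amounts to re-proving EKL. The block-diagonal fallback fails for density reasons. If you place $\FF,\GG$ block-diagonally in $\Hom(B\oplus B,A\oplus A)$ with the off-diagonal blocks frozen, the images have density at most $q^{-2n^2}$. The error allowed by Lemma~\ref{lem:18} in dimension $2n$ is $C_Rq^{-2Rn}$ of the whole space, so its conclusion is vacuous (the empty junta satisfies it). If instead you free the padding enough to repair the density, differences can become invertible, so the union is no longer intersecting. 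Alternatively the images intersect only because of the padding, and then the junta tells you nothing about $f,g$. This reduction is the real difficulty, not the cell-matching step you flagged, which is a routine explicit construction.

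The missing idea is to enlarge each space by \emph{one} dimension only. Set $B^+=B\oplus\langle y\rangle$, $A^+=A\oplus\langle z\rangle$, and $\iota_c(M)(b+\lambda y)=Mb+c\lambda z$ for $c=0,1$. Then $\iota_0(\mathrm{supp} f)\cup\iota_1(\mathrm{supp} g)$ is intersecting in $\Hom(B^+,A^+)$:
\begin{itemize}
\item two images from the same side agree on $y$;
\item a cross pair agrees on a nonzero $b\in\ker(M-N)$.
\end{itemize}
The price is a density factor of $q^{-2n-1}$, so the images have density only about $q^{-3n-K}$. You therefore need $R\geq 4$, not $R=2$, for the error $C_Rq^{-R(n+1)}$ to be negligible. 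Your $R=2$ would suffice only for a cross version stated directly in $\Hom(B,A)$.

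You then pull back. The preimage under $\iota_c$ of a cell is empty or a cell of no larger complexity, so $J_f=\iota_0^{-1}(J)$ and $J_g=\iota_1^{-1}(J)$ are bounded juntas. They are cross intersecting: if $\iota_0(M)$ and $\iota_1(N)$ agree on $b+\lambda y\neq 0$, then $\lambda z=0$ forces $\lambda=0$, hence $b\neq 0$ and $Mb=Nb$.

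The paper also randomly sparsifies $f,g$ (Chernoff bound plus a union bound over juntas) before applying Lemma~\ref{lem:18}. Applying the lemma to the supports, or to your level sets, works just as well, since each element outside $J$ carries $f$-mass at most $1$.
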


The $O(1)$ term in $L_0$ is actually $4+\lceil\log_q(2s_4(q))\rceil$, where $s_4(q)$
comes from Lemma~\ref{lem:18}, and we only need that this term is a constant
depending only on $q$.

The expectation in the above proposition is with respect to the uniform measure
on $\MM:=\Hom(B,A)$, that is, $\E f:=|\MM|^{-1}\sum_{M\in\MM}f(M)$.
Similarly, the conditional expectation is defined by
$\E[f\mid Mu=v]:=|\MM_{u,v}|^{-1}\sum_{M\in\MM_{u,v}}f(M)$, where
$\MM_{u,v}:=\{M\in\MM:Mu=v\}$. For two maps $f_1,f_2:\MM\to\R$, let
$\E[f_1f_2]:=|\MM|^{-1}\sum_{M\in\MM}f_1(M)f_2(M)$.

We will use Proposition~\ref{prop:20} to prove Proposition~\ref{prop:10},
see the paragraph preceding Claim~\ref{claim:30}.
We include a probabilistic proof of Proposition~\ref{prop:20} in Appendix.

\subsection{Two more technical lemmas}
Let $V$ be a $3l$-dimensional vector space over $\F_q$.

\begin{lemma}\label{lem:21}
For a given constant $K_0>0$ and for all sufficiently large $l$, the following holds. Let $\AA\subset\qbinom Vl$ satisfy
$ |\AA|\geq q^{-K_0}\qbinom{3l-1}{l}$, and let $\RR\subset\qbinom V{l+1}$.
If every $R\in\RR$ satisfies 
$|\{A\in\AA:A\cap R\ne0\}|  \geq q^{-K_0}|\AA|$, then $|\RR|\leq q^{O_{K_0}(l)}$.
\end{lemma}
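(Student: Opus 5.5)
The plan is to show that $\RR$ contains no large subfamily of pairwise ``far apart'' members. Then $\RR$ is covered by boundedly many ``neighbourhoods'', and each neighbourhood has size $q^{O(l)}$.

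\emph{Step 1 (size normalisation).} By the estimate $\qbinom{x}{l}=q^{(x-l)l+O(1)}$ we have $\qbinom{3l-1}{l}=q^{2l^2-l+O(1)}$. Hence
\[
|\AA|\geq q^{2l^2-l-K_0+O(1)}.
\]
For every $R\in\RR$ write $\AA_R:=\{A\in\AA:A\cap R\neq\{0\}\}$, so that $|\AA_R|\geq q^{-K_0}|\AA|$ by hypothesis.

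\emph{Step 2 (two members of $\RR$ with small intersection share few $A$'s).} Let $R,R'\in\RR$ with $W:=R\cap R'$ of dimension $w$. Every $A\in\AA_R\cap\AA_{R'}$ satisfies one of two conditions: either $A$ meets $W$ nontrivially, or $A$ contains a line of $R$ and a different line of $R'$, hence a $2$-subspace spanned by two such lines.

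We count each case by fixing lines and using $\qbinom{3l-j}{l-j}$ for the number of $l$-spaces through a fixed $j$-space.
\begin{itemize}
\item The first case contributes at most $q^{w+O(1)}\qbinom{3l-1}{l-1}=q^{w+2l^2-2l+O(1)}$.
\item The second case contributes at most $q^{2(l+1)}\qbinom{3l-2}{l-2}=q^{2l^2-2l+O(1)}$.
\end{itemize}
Compared with $|\AA|$, the total is at most $q^{w-l+K_0+O(1)}|\AA|$. Thus there is a constant $C=C(K_0)$ such that $\dim(R\cap R')<l-C$ forces
\[
|\AA_R\cap\AA_{R'}|\leq q^{-C/2}|\AA|.
\]
Call such a pair \emph{far}.

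\emph{Step 3 (no large far subfamily).} Put $t:=\lceil 2q^{K_0}\rceil$ and suppose $R_1,\dots,R_t\in\RR$ are pairwise far. Then
\[
\sum_i|\AA_{R_i}|\geq t\,q^{-K_0}|\AA|\geq 2|\AA|.
\]
On the other hand, Bonferroni gives
\[
|\AA|\geq\Bigl|\bigcup_i\AA_{R_i}\Bigr|\geq\sum_i|\AA_{R_i}|-\sum_{i<j}|\AA_{R_i}\cap\AA_{R_j}|\geq 2|\AA|-t^2q^{-C/2}|\AA|.
\]
This is a contradiction once $C$ is chosen large compared with $K_0$ (so that $t^2q^{-C/2}<1$). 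Since $t$ depends only on $K_0$, enlarging $C$ is legitimate.

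So take a maximal pairwise far subfamily $R_1,\dots,R_s$, which has $s<t$. By maximality, every $R\in\RR$ satisfies $\dim(R\cap R_i)\geq l-C$ for some $i$.

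\emph{Step 4 (counting neighbourhoods).} We count the $R$ with $\dim(R\cap R_i)\geq l-C$ for a fixed $i$. First choose the subspace $R\cap R_i$ of $R_i$, of dimension at least $l-C$; there are at most $\sum_{j\leq C+1}\qbinom{l+1}{j}=q^{O_C(l)}$ choices. Then extend it by at most $C+1$ vectors of $V$; this gives at most $q^{3l(C+1)}$ choices. Hence each neighbourhood has size $q^{O_{K_0}(l)}$, and
\[
|\RR|\leq t\,q^{O_{K_0}(l)}=q^{O_{K_0}(l)}.
\]

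The main obstacle is Step 2. It needs a clean upper bound for the number of $l$-spaces meeting two given $(l+1)$-spaces, uniformly in how the two spaces intersect, with the loss exactly of order $q^{w-l}$. The constants in $\qbinom{3l-j}{l-j}$ versus $\qbinom{3l-1}{l}$ have to be tracked carefully so that the threshold $l-C$ depends only on $K_0$. The rest is a routine covering argument.
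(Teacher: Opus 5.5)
Your proposal is correct and follows essentially the same route as the paper. Both arguments split $\AA_R\cap\AA_{R'}$ into $l$-spaces meeting $R\cap R'$ and those containing two distinct lines from $R$ and $R'$, take a maximal pairwise far subfamily of bounded size, and cover $\RR$ by constant-radius neighbourhoods of size $q^{O_{K_0}(l)}$. The one cosmetic difference is that you apply Bonferroni to exactly $t=\lceil 2q^{K_0}\rceil$ far members, whereas the paper applies a Cauchy--Schwarz second-moment bound to the whole maximal subfamily, getting $m\le 2q^{2K_0}$; both work equally well.
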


\begin{proof}
For $R,R'\in\qbinom{V}{l+1}$, we define the distance between them by 
$d(R,R'):=(l+1)-\dim(R\cap R')$.
We first generously estimate how many $l$-subspaces meet both $R$ and $R'$ when $d(R,R')$ is large. Let $t=d(R,R')$ and $S=R\cap R'$, so $\dim S=l+1-t$.

If $A\in\qbinom Vl$ satisfies $A\cap S\ne0$, then $A$ contains a line in $S$. Hence the number of such $A$ is at most
$  \qbinom{l+1-t}{1}\qbinom{3l-1}{l-1}
  \leq q^{-t+O(1)}\qbinom{3l-1}{l}$.
If $A\cap S=\{0\}$ but $A$ meets both $R$ and $R'$, then $A$ contains a line $L\leq R$ and a line $L'\leq R'$ whose span is two-dimensional. Thus the number of such $A$ is at most
$ \qbinom{l+1}{1}^2\qbinom{3l-2}{l-2}
  \leq q^{-l+O(1)}\qbinom{3l-1}{l}$.
Choose a constant $t_0=t_0(K_0)$ so large that 
if $t>t_0$, then $q^{-t+O(1)}+q^{-l+O(1)}<\frac12 q^{-3K_0}$. Then,
$|\{A\in\AA:A\cap R\neq 0,\,  \ A\cap R'\neq 0\}|
\leq\frac12 q^{-3K_0} \qbinom{3l-1}l\leq\frac12 q^{-2K_0}|\AA|$ provided $d(R,R')>t_0$.

Now choose a maximal subfamily $R_1,\dots,R_m$ of $\RR$ with pairwise distance greater than $t_0$. Let $X_i=\{A\in\AA:A\cap R_i\ne0\}$.
Then $q^{-K_0}|\AA|\leq |X_i|\leq |\AA|$, and 
$|X_i\cap X_j|<\frac12 q^{-2K_0}|\AA|$ for $i\ne j$.
Let $I(A)=|\{i\in[m]:A\in X_i\}|$, and let $\one_{X}$ denote the indicator vector of $X$.
Then,
\[
I(A)=\sum_{i=1}^m \one_{X_i}(A),\quad
\sum_{A\in\AA}\sum_{i=1}^m \one_{X_i}(A)=\sum_{i=1}^m|X_i|, \quad
\sum_{A\in\AA}\sum\nolimits'\one_{X_i}(A)\one_{X_j}(A)=\sum\nolimits'|X_i\cap X_j|,
\]
where $\sum\nolimits'$ denotes the sum over all $1\leq i<j\leq m$, and
\[
\sum_{A\in\AA} I(A)^2
=\sum_{A\in\AA}\left(\sum_{i=1}^m \one_{X_i}(A)\right)^2
=\sum_{i=1}^m |X_i|+2\sum\nolimits'|X_i\cap X_j|.
\]
Also, we have $mq^{-K_0}|\AA|\leq \sum_{A\in\AA}I(A)\leq m|\AA|$, and  
$\sum\nolimits'|X_i\cap X_j|<\binom m2\left(\frac12q^{-2K_0}|\AA|\right)$.
Thus, by the Cauchy--Schwarz inequality, it follows that
\[
(m q^{-K_0}|\AA|)^2\leq
  \left(\sum_{A\in\AA}I(A)\right)^2
  \leq |\AA|\sum_{A\in\AA}I(A)^2
\leq|\AA|\left(m|\AA|+\frac12m^2 q^{-2K_0}|\AA|\right),
\]
which yields $m\leq 2q^{2K_0}=O_{K_0}(1)$.

Let $\Ball_{j}(R_i):=\{R\in\qbinom V{l+1}:d(R,R_i)\leq j\}$ denote the 
Grassmann ball of radius $j$ centered at $R_i$. 
For a fixed $R_i$ and a constant $j\leq t_0(K_0)$, the number of subspaces 
$R\in\qbinom V{l+1}$ with $d(R,R_i)=j$ is, by \eqref{eq:4},
$q^{j^2}\qbinom{2l-1}j\qbinom{l+1}j=q^{3lj+O_{K_0}(1)}=q^{O_{K_0}(l)}$, and 
$|\Ball_{t_0}(R_i)|\leq\sum_{j=0}^{t_0}q^{O_{K_0}(l)}=(t_0+1)q^{O_{K_0}(l)}$.
Since $\RR\subset\bigcup_{i=1}^m\Ball_{t_0}(R_i)$, we have
$|\RR|\leq m(t_0+1)q^{O_{K_0}(l)}=q^{O_{K_0}(l)}$, because both $m$ and $t_0$ are constants 
depending on $K_0$ only.
\end{proof}

\begin{lemma}\label{lem:22}
For every fixed $c>0$ and all sufficiently large $l$, the following holds.
Let $H\leq V$ be a hyperplane and let $C\in\qbinom Vl$ satisfy $C\not\subset H$.
If $\AA,\BB\subset\qbinom Hl$ satisfy
$|\AA|,|\BB|\geq q^{2l^2-l-c}$, then there exist $A\in\AA$ and $B\in\BB$ such that $A+B+C=V$.
\end{lemma}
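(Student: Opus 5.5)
The idea is to reduce to a statement about $2l$-dimensional quotients and then apply the $q$-Kneser eigenvalue bound of Lemma~\ref{lem:16}. Note first that $|\qbinom Hl|=\qbinom{3l-1}l=q^{2l^2-l+O(1)}$, so the hypothesis says that $\AA$ and $\BB$ have density at least $q^{-c-O(1)}$ in $\qbinom Hl$.

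\emph{Step 1: reduce to $H$.} Put $C':=C\cap H$. Since $C\not\subset H$, we have $\dim C'=l-1$. For $A,B\leq H$ we have $A+B\leq H$, so the modular law gives
\[
(A+B+C)\cap H=A+B+(C\cap H)=A+B+C'.
\]
Because $C+H=V$, the condition $A+B+C=V$ is therefore equivalent to $A+B+C'=H$. It thus suffices to find $A\in\AA$ and $B\in\BB$ with $A+B+C'=H$.

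\emph{Step 2: pass to the quotient.} Let $\ol H=H/C'$, which has dimension $2l$, and let $\pi:H\to\ol H$ be the quotient map. Using the notation of \eqref{eq:12}, set $\AA(C')=\{A\in\AA:A\cap C'=\{0\}\}$ and define $\BB(C')$ likewise. The subspaces $A\in\qbinom Hl$ with $A\cap C'\neq\{0\}$ each contain a line of $C'$. Their number is therefore at most $\qbinom{l-1}1\qbinom{3l-2}{l-1}=q^{2l^2-2l+O(1)}$, which is $o(|\AA|)$ for large $l$. Hence $|\AA(C')|\geq\frac12|\AA|$, and similarly $|\BB(C')|\geq\frac12|\BB|$.

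The images $\ol\AA:=\pi(\AA(C'))$ and $\ol\BB:=\pi(\BB(C'))$ are families of $l$-subspaces of $\ol H$. By \eqref{eq:5} (with $\dim C'=l-1$), each $l$-subspace of $\ol H$ has exactly $q^{l(l-1)}$ lifts. Consequently
\[
|\ol\AA|,\ |\ol\BB|\geq q^{-l(l-1)}\cdot\tfrac12 q^{2l^2-l-c}=q^{l^2-c-O(1)}.
\]
Since $N:=\qbinom{2l}l=q^{l^2+O(1)}$, both $\ol\AA$ and $\ol\BB$ have density at least $q^{-c-O(1)}$ in $\qbinom{\ol H}l$.

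\emph{Step 3: apply Lemma~\ref{lem:16}.} Suppose, for contradiction, that $\ol\AA$ and $\ol\BB$ are $2$-cross union in $\ol H$. Lemma~\ref{lem:16} then gives
\[
q^{2l}|\ol\AA||\ol\BB|\leq (N-|\ol\AA|)(N-|\ol\BB|)\leq N^2.
\]
This says the product of the two densities is at most $q^{-2l}$, which contradicts the lower bound $q^{-2c-O(1)}$ once $l$ is large.

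So there exist $A\in\AA(C')$ and $B\in\BB(C')$ with $\pi(A)+\pi(B)=\ol H$. This means $A+B+C'=H$, and by Step 1, $A+B+C=V$.

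I do not expect a real obstacle. The only points needing care are the modular-law reduction in Step 1 and checking that the subspaces meeting $C'$ form a negligible fraction, which is what makes the lift count in Step 2 valid.
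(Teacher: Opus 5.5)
Your proof is correct and follows the paper's argument essentially step for step. You quotient $H$ by $C\cap H$, discard the $q^{2l^2-2l+O(1)}$ members meeting $C\cap H$, use the $q^{l(l-1)}$ lift count to get images of size $q^{l^2-O_c(1)}$ in the $2l$-dimensional quotient, and lift a spanning pair back; your modular-law reduction is just the paper's final step $C=W\oplus\langle u\rangle$ done first.

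The only difference is that you get the contradiction from the spectral bound of Lemma~\ref{lem:16} rather than from Theorem~\ref{thm:3}. Both give $|\ol\AA||\ol\BB|\le q^{2l^2-2l+O(1)}$, and yours has the mild advantage of using only a tool proved within the paper.
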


\begin{proof}
Let $W=C\cap H$ and $\overline H=H/W$. 
Since $C\not\subset H$, we have $\dim W=l-1$ and 
$\dim\overline H=\dim(H)-\dim W=2l$.
Let $\pi:H\to\overline H$ be the quotient map. Then $\ker\pi=W$.
For each $\overline F\in\qbinom{\overline H}l$, 
$\dim(\pi^{-1}(\overline F))=\dim\ol F+\dim W=2l-1$, and $\overline F$ has $q^{l(l-1)}$ lifts by \eqref{eq:5}.

We bound the number $N$ of $l$-subspaces $F\in\qbinom Hl$ with 
$F\cap W\neq 0$.
By choosing a line in $W$ and choosing $l$-subspaces in $H$ containing the 
line, we see that $N\leq\qbinom{l-1}1\qbinom{3l-2}{l-1}=q^{2l^2-2l+O(1)}$.
It follows from this with $|\AA|\geq q^{2l^2-l-O_c(1)}$ that if $l$ is
sufficiently large, then $\AA_0:=\{A\in\AA:A\cap W=\{0\}\}$ satisfies
$|\AA_0|\geq|\AA|-N\geq q^{2l^2-l-O_c(1)}$. Define $\BB_0$ similarly.

Let $\overline\AA=\{\pi(A):A\in\AA_0\}$. Since each $\ol A\in\ol\AA$ has
at most $q^{l(l-1)}$ lifts in $\AA_0$, we have
$|\overline{\AA}| \geq q^{-l(l-1)}|\AA_0|=q^{l^2-O_c(1)}$.
Similarly, $|\overline{\BB}| \geq q^{l^2-O_c(1)}$.
Thus we have
$|\overline\AA| |\overline\BB|\geq q^{2l^2-O_c(1)}$, and this means that
$\overline\AA$ and $\overline\BB$ are not cross intersecting. Indeed,
if they are cross intersecting, then Theorem~\ref{thm:3} yields that 
$|\overline\AA| |\overline\BB|\leq \qbinom{2l-1}{l-1}^2=q^{2l^2-2l+O(1)}$.

Since $\overline\AA$ and $\overline\BB$ are not cross intersecting,
there are $\overline A\in\overline\AA$ and $\overline B\in\overline\BB$
such that $\overline A\cap\overline B=\{0\}$. Let $A\in\AA_0$ and $B\in\BB_0$
be their lifts. Since $\dim \overline H=2l$, we have
$\overline A+\overline B=\overline H$, and $A+B+W=H$.
Moreover, we can write $C=W\oplus\langle u\rangle$ with a vector $u\notin H$, 
and $A+B+C=V$.
\end{proof}

\section{Proofs}
Let $q$ be a fixed prime power, and we treat $q$ as a constant.
\subsection{Proof of Proposition~\ref{prop:10}}

Let $l$ be a sufficiently large integer, and let $V=\F_q^{3l}$
be a $3l$-dimensional vector space over $\F_q$.
We consider the following assertion $\mathsf P_3(K)$ for $K>0$.

\begin{assertion}[$\mathsf P_3(K)$]
Let $\FF_1,\FF_2,\FF_3\subset\qbinom{V}{l}$ be $3$-cross union families.
If at least one of the following two conditions holds,
\begin{itemize}
\item[\rm (i)]
$\|\FF_1\|_l+\|\FF_2\|_l+\|\FF_3\|_l\geq 3(3l-1)$.
\item[\rm (ii)]
$|\FF_i|\geq q^{-K}\qbinom{3l-1}{l}$ 
for all $i=1,2,3$,
\end{itemize}
then there is a hyperplane $H\leq V$ such that
$\FF_i\subset\qbinom{H}{l}$ for all $i=1,2,3$.
\end{assertion}

Note that under either condition, all families are non-empty for sufficiently large $l$.

\begin{proposition}\label{prop:24}
For every fixed $K>0$, the assertion $\mathsf P_3(K)$ holds
for all sufficiently large $l$, depending on $K$.
\end{proposition}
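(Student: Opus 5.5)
Under condition (i) every $\|\FF_i\|_l$ is large. Since $\|\FF_j\|_l\leq 3l$ for each $j$, we get $\|\FF_i\|_l\geq 3(3l-1)-6l=3l-3$, hence $|\FF_i|\geq\qbinom{3l-3}{l}\geq q^{-2l+O(1)}\qbinom{3l-1}l$. This is much weaker than (ii), so I will handle (i) by a separate pre-reduction and then run the main argument under a density assumption of the form $\mu_i\geq q^{-\delta_il-c}$ with $\sum\delta_i\leq 3$, which is exactly what Lemma~\ref{lem:17} needs with $m=3$. Under (i), write $|\FF_i|=\qbinom{x_i}l$ with $\sum x_i\geq 9l-3$. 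Then $\mu_i=q^{-(3l-x_i)l+O(1)}$, so $\delta_i:=3l-x_i$ satisfies $\sum\delta_i\leq 3$. Under (ii), $\delta_i=1$ for all $i$, again up to constants. In both cases all three densities are at least $q^{-3l-O(1)}$.

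The main step reduces to two dimensions. Apply Lemma~\ref{lem:17} with $s=3$: for most $D\in\FF_3$, the projected families $\ol\FF_1^D,\ol\FF_2^D\subset\qbinom{V/D}l$ (with $\dim V/D=2l$) are $2$-cross union, i.e.\ cross intersecting in $V/D$. Their densities are at least $q^{-l^2}\cdot\frac12\theta_0\mu_i$, which after normalisation is roughly $\mu_i q^{2l^2}/q^{l^2}$ compared with $\qbinom{2l}l$. Fix a complement $B$ of a suitable $l$-space $A_0$ in $V/D$. Then the $l$-spaces of $V/D$ transversal to $A_0$ are graphs of maps in $\Hom(B,A_0)$, and cross intersecting corresponds to $\ker(M-N)\neq0$. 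Lemma~\ref{lem:15} provides an $A_0$ where both conditional densities remain at least half their average. I then need the density bound $\E f,\E g\geq q^{-l-K'}$ on $\Hom(B,A_0)$, and this is where $\delta_1+\delta_2\leq 3-\delta_3$ must be checked. Here I choose $s$ to be the index with the largest $\delta$, so that $\delta_1,\delta_2\lesssim 1$ up to constants whenever the distribution of the $\delta_i$ is balanced. If some $\delta_i$ is close to $0$, the corresponding family is nearly everything, and the conclusion is easy, since such a family has $F_1+F_2+F_3=V$ unless the other families are tiny.

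Proposition~\ref{prop:20} then gives either a point condition $Mu=v$ or a dual condition $\phi\circ M=\psi$ that captures a constant fraction of both $f$ and $g$. Translating back, this means a constant fraction of $\ol\FF_1^D$ and $\ol\FF_2^D$ meet a fixed line of $V/D$, or lie in a fixed hyperplane of $V/D$. Lifting to $V$ gives, for most $D\in\FF_3$, an $(l+1)$-space $R=D+\langle w\rangle$ such that a $q^{-O(1)}$ fraction of $\FF_1$ meets $R$, or a hyperplane $H_D\supset D$ containing a $q^{-O(1)}$ fraction of $\FF_1,\FF_2$. In the line case, Lemma~\ref{lem:21} shows there are only $q^{O(l)}$ such $R$, while $\FF_3$ has far more than $q^{O(l)}\cdot q^{l}$ members. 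So many $D$ share a single $R$ and hence lie in it, which is impossible because $R$ contains few $l$-spaces. The line alternative is therefore ruled out, and the hyperplane alternative survives. Pigeonholing over the $q^{O(l)}$ hyperplane candidates, counted by an argument analogous to Lemma~\ref{lem:21}, I find one hyperplane $H$ containing a large part of $\FF_1,\FF_2$ and many $D\in\FF_3$.

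Finally I promote "most" to "all" with Lemma~\ref{lem:22}. Suppose some $C\in\FF_3$ has $C\not\subset H$. The subfamilies $\FF_1\cap\qbinom Hl$ and $\FF_2\cap\qbinom Hl$ have size at least $q^{2l^2-l-c}$, so Lemma~\ref{lem:22} yields $A+B+C=V$, contradicting the $3$-cross union property. Hence $\FF_3\subset\qbinom Hl$, and by symmetry, rerunning the argument with the roles permuted, $\FF_1,\FF_2\subset\qbinom Hl$ as well. The main obstacle is the middle step: converting the junta alternatives into a global structure in $V$ consistently over many $D$, with the exponent bookkeeping keeping the $\Hom$ densities above $q^{-l-K}$.
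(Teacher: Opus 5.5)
You have the paper's architecture (good $D\in\FF_3$ via Lemma~\ref{lem:14}/Lemma~\ref{lem:17}, quotient by $D$, Lemma~\ref{lem:15}, graphs of maps, Proposition~\ref{prop:20}, Lemma~\ref{lem:21} to rule out the point alternative, Lemma~\ref{lem:22} to propagate). But the step you flag as the main obstacle is a genuine gap. You never prove that all three families have size $q^{2l^2-l+O_K(1)}$, and your workaround fails. Take $(\delta_1,\delta_2,\delta_3)=(1.4,1.4,0.2)$, which satisfies $\sum\delta_i\le 3$. Whichever index you project along, a remaining family has density about $q^{-1.4l}$, below the threshold $q^{-l-K}$ that Proposition~\ref{prop:20} requires. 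Meanwhile $\FF_3$ has density $q^{-0.2l}$, which is nowhere near ``nearly everything''. Two-sided size information is also needed later:
\begin{itemize}
\item Lemma~\ref{lem:21} needs the $q^{2l^2-l-O(1)}$ members of $\FF_1$ meeting $R$ to be a $q^{-O(1)}$ \emph{fraction} of $\FF_1$. That requires the upper bound $|\FF_1|\le q^{2l^2-l+O(1)}$, which condition (ii) does not supply.
\item The second application of Lemma~\ref{lem:22}, with $\FF_3$ as one of the two large families, needs $|\FF_3|\ge q^{2l^2-l-c}$.
\end{itemize}

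The missing idea is the paper's Claim~\ref{claim:26}. For a good $C\in\GG_{i_3}$, the projections $\pi_C(\FF_{i_1}(C))$ and $\pi_C(\FF_{i_2}(C))$ are cross intersecting in the $2l$-dimensional space $V/C$. Theorem~\ref{thm:3} then gives $d_{i_1}(C)d_{i_2}(C)\le q^{2l^2}\qbinom{2l-1}{l-1}^2=q^{4l^2-2l+O(1)}$. Combined with $d_j(C)\ge\frac12\theta_0\mu_j=q^{-O(1)}|\FF_j|$, this yields $|\FF_{i_1}||\FF_{i_2}|\le q^{4l^2-2l+O(1)}$, i.e.\ $\delta_{i_1}+\delta_{i_2}\ge 2-O(1/l)$ for every pair. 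Under (i), together with $\sum\delta_i\le 3$, this forces $\delta_i=1+O(1/l)$. Under (ii) it supplies the matching upper bound. So $|\FF_i|=q^{2l^2-l+O_K(1)}$ in both cases, and every later step has the right exponents.

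Two smaller points. First, the function fed into Proposition~\ref{prop:20} must be the lift-weighted $f_i(\ol F)=q^{-l^2}|\{F\in\FF_i:\pi_D(F)=\ol F\}|$, not the indicator of $\ol\FF_i^D$. With the indicator, a constant fraction of $\ol\FF_1^D$ meeting a line or lying in a hyperplane does not lift to $q^{2l^2-l-O(1)}$ members of $\FF_1$. Second, no pigeonholing over hyperplanes is needed. A single $D$ satisfying the dual alternative gives $H$, and then $\FF_3\subset\qbinom Hl$ by Lemma~\ref{lem:22}. After that, $\FF_1,\FF_2\subset\qbinom Hl$ follow by applying Lemma~\ref{lem:22} to the large parts of the other two families inside $H$. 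Rerunning the whole argument with permuted roles is not a substitute, since it might produce a different hyperplane.
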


As we will see, Proposition~\ref{prop:10} is an easy consequence of 
Proposition~\ref{prop:24}. For this proof, we only need the condition (i)
of $\mathsf P_3(K)$. However, we need the condition (ii) in the next section,
where we show that $\mathsf P_m(K)$ ($m\geq 3$) holds by induction on $m$.

\begin{proof}[Proof of Proposition~\ref{prop:24}]
Let $\Omega=\qbinom Vl$, and let $x_i=\|\FF_i\|_l$ for $i=1,2,3$. 
Let $G=K_q(3l,l)$ from \S~\ref{sec:3-d q-K}.
Recall that $G$ is $\theta_0$-regular, where $\theta_0=q^{l^2}\qbinom{2l}l=q^{2l^2+O(1)}$. 
If (i) holds, set $\delta_i=3l-x_i$; otherwise (ii) holds, and set $\delta_i=1$.
In both cases, we have
$\delta_1+\delta_2+\delta_3\leq 3$, and
$\mu_i:=|\FF_i|/|\Omega|\geq q^{-\delta_il-O_K(1)}$.
For $C\in\Omega$, let $\FF_i(C)=\{F\in\FF_i:F\cap C=\{0\}\}$ and $d_i(C)=|\FF_i(C)|$.

\begin{claim}\label{claim:25}
Let $\{i_1,i_2,i_3\}=[3]$.
Then, there exists $\GG_{i_3}\subset\FF_{i_3}$ such that 
$|\GG_{i_3}|=(1-o(1))|\FF_{i_3}|$ and
$d_j(C_{i_3})\geq\frac12 \theta_0\mu_j$ for all $C_{i_3}\in\GG_{i_3}$ and $j\in\{i_1,i_2\}$.
In particular, $d_{i_1}(C_{i_3})d_{i_2}(C_{i_3})\geq q^{4l^2-(\delta_{i_1}+\delta_{i_2})l-O_K(1)}$.
\end{claim}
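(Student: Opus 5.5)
This is a direct application of Lemma~\ref{lem:17} with $m=3$ and $s=i_3$. The hypotheses of Lemma~\ref{lem:17} are exactly what we set up just before the claim. The families $\FF_1,\FF_2,\FF_3$ are $3$-cross union. Each satisfies $\mu_i\geq q^{-\delta_i l-O_K(1)}$ with $\delta_i\geq 0$ and $\delta_1+\delta_2+\delta_3\leq 3=m$.

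The one thing to check is the sign of $\delta_i$. In case (ii), $\delta_i=1\geq 0$ trivially. In case (i), $\delta_i=3l-x_i\geq 0$ because $x_i=\|\FF_i\|_l\leq n=3l$. The sum condition follows from $x_1+x_2+x_3\geq 3(3l-1)$. The lower bound on $\mu_i$ comes from the estimate $\qbinom{x}{l}=q^{l(x-l)+O(1)}$ together with $|\Omega|=q^{2l^2+O(1)}$. In case (ii) it comes from $\qbinom{3l-1}{l}/\qbinom{3l}{l}=q^{-l+O(1)}$. So the constant $c$ in Lemma~\ref{lem:17} may be taken as $O_K(1)$.

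Lemma~\ref{lem:17} (after renumbering so that $s=i_3$) then gives $\GG_{i_3}\subset\FF_{i_3}$ with $|\GG_{i_3}|=(1-o(1))|\FF_{i_3}|$ such that $d_j(C)=|\FF_j(C)|\geq\frac12\theta_0\mu_j$ for every $C\in\GG_{i_3}$ and $j\in\{i_1,i_2\}$. For completeness, the mechanism inside Lemma~\ref{lem:17} is Lemma~\ref{lem:14} with $\eta=\frac12$. The exceptional set has size $\mu_j^{-1}q^{-4l+O(1)}|\Omega|$, which relative to $|\FF_{i_3}|=\mu_{i_3}|\Omega|$ is at most $q^{-(4-\delta_j-\delta_{i_3})l+O_K(1)}\leq q^{-l+O_K(1)}=o(1)$, using $\delta_j+\delta_{i_3}\leq 3$.

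For the ``in particular'' part, multiply the two bounds:
\[
d_{i_1}(C)\,d_{i_2}(C)\geq\tfrac14\theta_0^2\mu_{i_1}\mu_{i_2}.
\]
Since $\theta_0=q^{2l^2+O(1)}$, this gives $\theta_0^2=q^{4l^2+O(1)}$, and $\mu_{i_1}\mu_{i_2}\geq q^{-(\delta_{i_1}+\delta_{i_2})l-O_K(1)}$. Hence
\[
d_{i_1}(C)\,d_{i_2}(C)\geq q^{4l^2-(\delta_{i_1}+\delta_{i_2})l-O_K(1)}.
\]

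There is no real obstacle. The only points needing care are checking $\delta_i\geq 0$ in case (i) and making sure the $O(1)$ errors in the $q$-binomial estimates stay uniform in $l$. Both follow from the $q$-binomial estimate lemma, since $x_i\in[l,3l]$.
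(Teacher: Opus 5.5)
Your proposal is correct and is essentially the paper's argument. The paper applies Lemma~\ref{lem:14} with $\eta=\frac12$ directly to $\FF_{i_1},\FF_{i_2}$ and removes the two exceptional sets from $\FF_{i_3}$ using $\delta_j+\delta_{i_3}\leq 3$; this is exactly the computation packaged in Lemma~\ref{lem:17} with $m=3$, which you cite and then unpack, and the ``in particular'' bound follows as you describe by multiplying the two lower bounds with $\theta_0=q^{2l^2+O(1)}$.
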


\begin{proof}
For brevity, we show the case $i_1=1,i_2=2$, and $i_3=3$, 
and we write $\GG$ and $C$ for $\GG_3$ and $C_3$.
(For other cases, we permute families and choose fresh $\GG'$ and $C'$ accordingly.)
Apply Lemma~\ref{lem:14} to $\FF_1$ and $\FF_2$ with $\eta=\frac12$. Then,
for $j=1,2$, we have $|\EE_j|\leq 4q^{-4l+O(1)}\mu_j^{-1}|\Omega|$,
where $\EE_j=\{C\in\Omega:d_j(C)<\frac12\theta_0\mu_j\}$.
Using $|\FF_3|=\mu_3|\Omega|$ and $\delta_j+\delta_3\leq 3$, we have
$|\EE_j|\leq 4q^{-4l+O(1)}\mu_j^{-1}\mu_3^{-1}|\FF_3|=q^{-4l+(\delta_j+\delta_3)l+O_K(1)}|\FF_3|\leq q^{-l+O_K(1)}|\FF_3|$. 
Thus, by choosing $l$ sufficiently large, $|\EE_1|+|\EE_2|=o(1)|\FF_3|$.
Let $\GG:=\FF_3\setminus(\EE_1\cup\EE_2)$. Then $|\GG|=(1-o(1))|\FF_3|$,
and for every $C\in\GG$, we have
$d_j(C)\geq\frac12\theta_0\mu_j=q^{2l^2-\delta_jl-O_K(1)}$ for both $j=1,2$.
Thus $d_1(C)d_2(C)\geq q^{4l^2-(\delta_1+\delta_2)l-O_K(1)}$.
\end{proof}

Fix $\GG\subset\FF_3$ in Claim~\ref{claim:25}, and fix $C\in\GG$. 
Let $\overline V=V/C$, and let
\[
\pi_C:V\to \overline V
\]
be the quotient map.
For a subspace $F\leq V$, let $\pi_C(F)=\{\pi_C(x):x\in F\}$,
and for $\FF\subset\qbinom Vl$, let $\pi_C(\FF)=\{\pi_C(F):F\in \FF\}$.

\begin{claim}\label{claim:26}
For every $1\leq i\leq 3$, we have $x_i=3l-1+O_K(1/l)$ and
$|\FF_i|=q^{2l^2-l+O_K(1)}$. In particular, for every $1\leq i\leq 3$,
$|\FF_i|\geq q^{-\tilde K}\qbinom{3l-1}l$, where $\tilde K>0$ is a constant which may depend on $K$.
\end{claim}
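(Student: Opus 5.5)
The plan is to compare the lower bound on $d_{i_1}(C)d_{i_2}(C)$ from Claim~\ref{claim:25} with an upper bound coming from Theorem~\ref{thm:3} applied in the quotient $\overline V=V/C$. By symmetry of Claim~\ref{claim:25}, this will yield, for every pair $\{a,b\}\subset[3]$,
\[
\mu_a\mu_b\leq q^{-2l+O_K(1)}.
\]
Together with the given lower bounds on the $\mu_i$, this pins each $\mu_i$ down to $q^{-l+O_K(1)}$.

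\textbf{Step 1 (upper bound via the quotient).} Take $\{i_1,i_2,i_3\}=[3]$ and fix $C\in\GG_{i_3}$; this is possible because $|\GG_{i_3}|=(1-o(1))|\FF_{i_3}|>0$. Set
\[
\overline\FF_j:=\{\pi_C(F):F\in\FF_j(C)\}\subset\qbinom{\overline V}l \qquad (j=i_1,i_2),
\]
where $\dim\overline V=2l$. If $\pi_C(F_{i_1})\cap\pi_C(F_{i_2})=\{0\}$, then by dimension count $\pi_C(F_{i_1})+\pi_C(F_{i_2})=\overline V$, so $F_{i_1}+F_{i_2}+C=V$, contradicting the $3$-cross union property. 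Hence $\overline\FF_{i_1},\overline\FF_{i_2}$ are cross intersecting in a $2l$-dimensional space, and Theorem~\ref{thm:3} (with $t=1$, $n=2l$) gives
\[
|\overline\FF_{i_1}||\overline\FF_{i_2}|\leq\qbinom{2l-1}{l-1}^2=q^{2l^2-2l+O(1)}.
\]
Each $l$-subspace of $\overline V$ has exactly $q^{l^2}$ lifts by \eqref{eq:5}, so $d_j(C)\leq q^{l^2}|\overline\FF_j|$. Therefore
\[
d_{i_1}(C)\,d_{i_2}(C)\leq q^{4l^2-2l+O(1)}.
\]

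\textbf{Step 2 (lower bound).} Claim~\ref{claim:25} gives $d_j(C)\geq\frac12\theta_0\mu_j$ with the actual densities $\mu_j$, and $\theta_0=q^{2l^2+O(1)}$. Hence $d_{i_1}(C)\,d_{i_2}(C)\geq q^{4l^2-O(1)}\mu_{i_1}\mu_{i_2}$. Combining with Step~1 yields $\mu_{i_1}\mu_{i_2}\leq q^{-2l+O(1)}$ for every pair.

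\textbf{Step 3 (conclusion).} Write $\mu_i=q^{-\epsilon_i l}$, so the pairwise bound reads $\epsilon_a+\epsilon_b\geq 2-O(1)/l$.

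In case (ii), we have $\mu_i\geq q^{-l-O_K(1)}$, that is, $\epsilon_i\leq 1+O_K(1/l)$. The pairwise bound then forces $\epsilon_i\geq 1-O_K(1/l)$ for each $i$.

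In case (i), recall that $\mu_i=\qbinom{x_i}l/\qbinom{3l}l=q^{-(3l-x_i)l+O(1)}$, so $\epsilon_i=\delta_i+O(1/l)$ and $\sum_i\epsilon_i\leq 3+O(1/l)$. Subtracting a pairwise lower bound from this total gives $\epsilon_i\leq 1+O(1/l)$. Then a pairwise bound together with the upper bound on the other index gives $\epsilon_i\geq 1-O(1/l)$.

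In either case $\mu_i=q^{-l+O_K(1)}$, that is, $|\FF_i|=q^{2l^2-l+O_K(1)}$. By the $q$-binomial estimate lemma this is equivalent to $x_i=3l-1+O_K(1/l)$, and the final assertion follows with a suitable $\tilde K$.

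The only step needing care is the bookkeeping in Step~3: Claim~\ref{claim:25} was phrased with the nominal $\delta_i$, so it must be reapplied with the actual densities $\mu_i$, and the $O_K(1)$ error terms must be tracked so that they contribute only $O_K(1/l)$ to the widths.
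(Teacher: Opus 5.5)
Your proposal is correct and follows essentially the same route as the paper. You use Theorem~\ref{thm:3} on the projected families in $V/C$ together with the $q^{l^2}$ lift count for the pairwise upper bound, Claim~\ref{claim:25} for the matching lower bound, and the case split on (i)/(ii) to force every density to be $q^{-l+O_K(1)}$; note only that Claim~\ref{claim:25} is already stated with the actual densities ($d_j(C)\geq\frac12\theta_0\mu_j$), so nothing needs to be reapplied.
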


\begin{proof}
Let $\overline{\FF_i(C)}=\pi_C(\FF_i(C))\subset\qbinom{\overline V}l$. 
Then $\overline{\FF_1(C)}$ and $\overline{\FF_2(C)}$ are cross intersecting. 
Indeed, if there are
$\overline{F_1}\in\overline{\FF_1(C)}$ and $\overline{F_2}\in\overline{\FF_2(C)}$
with $\overline{F_1}\cap\overline{F_2}=\{0\}$, then there are $F_1\in\FF_1(C)$ and 
$F_2\in\FF_2(C)$ such that $F_1+F_2+C=V$, which contradicts the 3-cross union property.
By Theorem~\ref{thm:3}, we have 
\[
d_1(C) d_2(C)\leq q^{2l^2}|\ol{\FF_1(C)}||\ol{\FF_2(C)}|\leq
q^{2l^2}\qbinom{2l-1}{l-1}^2=q^{4l^2-2l+O(1)}. 
\]
In the same way, we get $d_{i_1}(C_{i_3})d_{i_2}(C_{i_3})\leq q^{4l^2-2l+O(1)}$
for all $\{i_1,i_2,i_3\}=[3]$. By Claim~\ref{claim:25}, it follows that
$d_j(C_i)\geq\frac12\theta_0\mu_j=\frac12q^{l^2}\qbinom{2l}l|\FF_j|/\qbinom{3l}l=q^{-O(1)}|\FF_j|$ for $i\neq j$, and so 
$|\FF_j|\leq d_j(C_i)q^{O(1)}$. Thus, we have 
\begin{align}\label{eq:13}
|\FF_{i_1}||\FF_{i_2}|\leq d_{i_1}(C_{i_3})d_{i_2}(C_{i_3})q^{O(1)}
\leq q^{4l^2-2l+O(1)}. 
\end{align}

First suppose that (ii) holds. 
Then we have $|\FF_i|\geq q^{-K}\qbinom{3l-1}l=q^{2l^2-l-O_K(1)}$ for all $i$. 
By this with \eqref{eq:13},
we obtain $|\FF_i|=q^{2l^2-l+O_K(1)}$, and $x_i=3l-1+O_K(1/l)$.

Next suppose that (i) holds. 
(In this case, $|\FF_i|/|\Omega|\geq q^{-\delta_il-O(1)}$, and 
the error term is independent of $K$.)
By Claim~\ref{claim:25}, $q^{4l^2-(\delta_{i_1}+\delta_{i_2})l-O(1)}\leq d_{i_1}(C_{i_3})d_{i_2}(C_{i_3})$. This together with \eqref{eq:13} yields
\[
 \delta_{i_1}+\delta_{i_2}\geq 2-O(1/l)
\]
for all $i_1\neq i_2$.
Then $\delta_1+3\geq\delta_1+(\delta_1+\delta_2+\delta_3)=
(\delta_1+\delta_2)+(\delta_1+\delta_3)\geq 4-O(1/l)$, and $\delta_1\geq 1-O(1/l)$.
On the other hand, $3\geq \delta_1+(\delta_2+\delta_3)\geq\delta_1+2-O(1/l)$, and
$\delta_1\leq 1+O(1/l)$. 
Hence, $\delta_1=1+O(1/l)$, or equivalently, $x_1=3l-1+O(1/l)$.
Clearly, the same holds for $x_2$ and $x_3$ as well.
Moreover, for every $1\leq i\leq 3$, $|\FF_i|\geq q^{-K_1}\qbinom{3l-1}l$, 
where $K_1$ is a constant independent of $K$ (but may depend on $q$). 

Let $\tilde K=\max\{K,K_1\}$. Then $|\FF_i|\geq q^{-\tilde K}\qbinom{3l-1}l$ if either (i) or (ii) holds.
\end{proof}

Each $\overline F\in\qbinom{\overline V}l$ has $q^{l^2}$ lifts
$F\in\qbinom Vl$ such that $\pi_C(F)=\overline F$, and in this case $F\cap C=\{0\}$. 
So, for $i=1,2$, we can define a map $f_i:\qbinom{\overline V}l\to[0,1]$ by
\[
f_i(\overline F):=q^{-l^2}|\{F\in\FF_i:\pi_C(F)=\overline F\}|. 
\]
Then, $q^{l^2}\sum_{\overline F\in\qbinom{\overline V}l}f_i(\overline F)=
|\{F\in\FF_i:F\cap C=\{0\}\}|=d_i(C)\geq\frac12\theta_0\mu_i$.
Thus,
\[
 \E f_i=\qbinom{2l}l^{-1}\sum_{\overline F\in\qbinom{\overline V}l}f_i(\overline F)
\geq \qbinom{2l}l^{-1}q^{-l^2}\frac12\theta_0\mu_i=\frac12\mu_i
\geq q^{-l-O_K(1)}.
\]
\begin{claim}
If $f_1(\overline F_1)f_2(\overline F_2)>0$ then $\overline F_1\cap \overline F_2\neq 0$.
\end{claim}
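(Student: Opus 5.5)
The claim is a direct consequence of the $3$-cross union property together with the fact that $C\in\GG\subset\FF_3$. I would argue by contraposition: suppose $f_1(\overline F_1)f_2(\overline F_2)>0$ but $\overline F_1\cap\overline F_2=\{0\}$, and derive a contradiction.

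First, unpack the positivity. Since $f_i(\overline F_i)>0$, the definition of $f_i$ yields some $F_i\in\FF_i$ with $\pi_C(F_i)=\overline F_i$, for $i=1,2$. As noted before the definition of $f_i$, such a lift satisfies $F_i\cap C=\{0\}$, though this is not even needed here.

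Next, use dimensions in $\overline V$. We have $\dim\overline V=3l-l=2l$ and $\dim\overline F_1=\dim\overline F_2=l$. The assumption $\overline F_1\cap\overline F_2=\{0\}$ then forces $\overline F_1+\overline F_2=\overline V$.

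Finally, pull back along $\pi_C$. Since $\pi_C(F_1+F_2+C)=\overline F_1+\overline F_2=\overline V$ and $\ker\pi_C=C\le F_1+F_2+C$, we get $F_1+F_2+C=\pi_C^{-1}(\overline V)=V$. Here $F_1\in\FF_1$, $F_2\in\FF_2$ and $C\in\FF_3$, so this contradicts the $3$-cross union property. This is exactly the argument already used for $\overline{\FF_1(C)}$ and $\overline{\FF_2(C)}$ in the proof of Claim~\ref{claim:26}.

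None of these steps is a real obstacle. The only point needing care is that $C$ is itself a member of $\FF_3$, which holds because it was chosen from $\GG\subset\FF_3$. This claim is what allows Proposition~\ref{prop:20}, or Theorem~\ref{thm:3} in the form used later, to be applied to $f_1,f_2$ after identifying $\qbinom{\overline V}l$ with graphs of linear maps.
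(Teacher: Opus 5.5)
Your proof is correct and follows the paper's argument: you take lifts $F_i\in\FF_i$ of $\overline F_i$ and note that $\overline F_1\cap\overline F_2=\{0\}$ forces $\overline F_1+\overline F_2=\overline V$ because $\dim\overline V=2l$. Pulling back then gives $F_1+F_2+C=V$ with $C\in\FF_3$, which contradicts the $3$-cross union property (your closing aside about Theorem~\ref{thm:3} is off target, since this claim feeds the kernel condition for Proposition~\ref{prop:20}, but it plays no role in the proof).
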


\begin{proof}
Suppose that $f_1(\overline F_1)f_2(\overline F_2)>0$. Let
$F_i\in\FF_i$ be a lift of $\ol F_i$ for $i=1,2$.
If $\overline F_1\cap \overline F_2=\{0\}$, then $\overline F_1+\overline F_2=\overline V$, and $(F_1+F_2+C)/C=\overline V$. But this means $F_1+F_2+C=V$,
contradicting the 3-cross union property.
\end{proof}

Since $\E f_i\geq q^{-l-O_K(1)}$, we can apply Lemma~\ref{lem:15} to $f_i$ ($i=1,2$). 
Then there exists $\overline A_0\in\qbinom{\overline V}l$ such that 
for both $i=1,2$,
\begin{align}\label{eq:14}
\E[f_i\mid \overline F\cap \overline A_0=\{0\}]=
 q^{-l^2}\sum_{\overline F\cap\overline A_0=\{0\}}f_i(\overline F)\geq q^{-l-O_K(1)}.
\end{align}
Choose any complement $\overline B_0\in\qbinom {\overline V}l$ so that $\overline V=\overline A_0\oplus\overline B_0$.
For a linear map $M\in\Hom(\overline B_0,\overline A_0)$, we define its graph $\Gamma(M)$ by
\[
\Gamma(M):=\{(Mb,b)\in\overline A_0\oplus\overline B_0:b\in\overline B_0\}. 
\]

\begin{claim}\label{claim:28}
There is a bijection between $\Hom(\overline B_0,\overline A_0)$ and 
$\{\overline F\in\qbinom{\overline V}l:\overline F\cap \overline A_0=\{0\}\}$
by sending $M$ to $\Gamma(M)$.
\end{claim}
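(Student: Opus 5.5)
The plan is to verify three things directly: $\Gamma(M)$ is an $l$-subspace meeting $\overline A_0$ trivially, the map $M\mapsto\Gamma(M)$ is injective, and it is surjective onto the $l$-subspaces meeting $\overline A_0$ trivially. Throughout I use $\overline V=\overline A_0\oplus\overline B_0$ with $\dim\overline A_0=\dim\overline B_0=l$, and the projection $p_B:\overline V\to\overline B_0$ along $\overline A_0$, which has kernel $\overline A_0$.

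\emph{Well-definedness.} For a linear $M$, the set $\Gamma(M)$ is the image of the injective linear map $b\mapsto(Mb,b)$. It is injective because its second coordinate is $b$. Hence $\Gamma(M)$ is a subspace of dimension $\dim\overline B_0=l$. If $(Mb,b)\in\overline A_0$, then its $\overline B_0$-component $b$ is $0$, so the vector is $0$. Thus $\Gamma(M)\cap\overline A_0=\{0\}$.

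\emph{Injectivity.} Suppose $\Gamma(M)=\Gamma(N)$ and let $b\in\overline B_0$. Then $(Mb,b)\in\Gamma(N)$, so $(Mb,b)=(Nb',b')$ for some $b'$. Comparing $\overline B_0$-components gives $b'=b$, and then $Mb=Nb$. So $M=N$.

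\emph{Surjectivity.} This is the only step with content. Let $\overline F\in\qbinom{\overline V}l$ with $\overline F\cap\overline A_0=\{0\}$. The restriction $p_B|_{\overline F}$ has kernel $\overline F\cap\overline A_0=\{0\}$, so it is injective. Since $\dim\overline F=l=\dim\overline B_0$, it is an isomorphism $\overline F\to\overline B_0$. Let $\sigma:\overline B_0\to\overline F$ be its inverse. Define $M:=p_A\circ\sigma\in\Hom(\overline B_0,\overline A_0)$, where $p_A$ is the projection onto $\overline A_0$ along $\overline B_0$. Every $x\in\overline F$ satisfies $x=p_A(x)+p_B(x)=(M\,p_B(x),p_B(x))$, so $\overline F\subseteq\Gamma(M)$. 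Both spaces have dimension $l$, so $\overline F=\Gamma(M)$. Together with the previous two steps, $M\mapsto\Gamma(M)$ is the claimed bijection.

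As a consistency check, $|\Hom(\overline B_0,\overline A_0)|=q^{l^2}$, which matches the count $q^{l\cdot l}\qbinom{l}{l}\qbinom{l}{0}=q^{l^2}$ given by \eqref{eq:4} for $l$-subspaces of $\overline V$ meeting $\overline A_0$ trivially. This agrees with the normalization $q^{-l^2}$ used in \eqref{eq:14}.
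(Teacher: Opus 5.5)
Your proposal is correct and follows essentially the same route as the paper: show that $\Gamma(M)$ meets $\overline A_0$ trivially, then invert using the projection onto $\overline B_0$, which restricts to an isomorphism on $\overline F$. You additionally verify explicitly that $\Gamma(M)$ has dimension $l$ and that $M\mapsto\Gamma(M)$ is injective, both of which the paper leaves implicit. Neither addition changes the argument.
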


\begin{proof}
Let $M\in\Hom(\overline B_0,\overline A_0)$. First, we show that 
if $\overline F:=\Gamma(M)\in\qbinom{\overline V}l$, then
$\overline F\cap \overline A_0=\{0\}$. Indeed, if $f\in\overline F$ and
$f=(Mb,b)\in \ol A_0$, then $b=0$ and so $Mb=0$, that is $f=0$.
Next, we show that for every $\overline F\in\qbinom{\overline V}l$ with
$\overline F\cap\overline A_0=\{0\}$,
there exists $M$ such that $\overline F=\Gamma(M)$.
By restricting the projection $\overline A_0\oplus\overline B_0\to\overline B_0$ to $\overline F$, we get an isomorphism $\overline F\to\overline B_0$.
Thus, for every $b\in\overline B_0$, there is a unique $a\in\overline A_0$
such that $(a,b)\in\overline F$. This determines $M\in\Hom(\overline B_0,\overline A_0)$ by $Mb=a$.
\end{proof}

For $i=1,2$, define $g_i:\Hom(\overline B_0,\overline A_0)\to[0,1]$ by
\[
g_i(M):=f_i(\Gamma(M)).  
\]
Then, by Claim~\ref{claim:28} with \eqref{eq:14}, we have
\begin{align*}
\E g_i&=|\Hom(\overline B_0,\overline A_0)|^{-1}\sum_M g_i(M)
=q^{-l^2}\sum_M f_i(\Gamma(M))
=q^{-l^2}\sum_{\overline F\cap\overline A_0=\{0\}}f_i(\overline F)
\geq q^{-l-O_K(1)}.
\end{align*}

\begin{claim}
If $g_1(M_1)g_2(M_2)>0$ then $\ker(M_1-M_2)\neq \{0\}$.
\end{claim}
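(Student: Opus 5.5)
We argue by contraposition, reducing to the claim just proved about $f_1,f_2$: if $f_1(\overline F_1)f_2(\overline F_2)>0$ then $\overline F_1\cap\overline F_2\neq\{0\}$. Suppose $g_1(M_1)g_2(M_2)>0$. By definition, $g_i(M_i)=f_i(\Gamma(M_i))$, so $f_1(\Gamma(M_1))f_2(\Gamma(M_2))>0$. By the previous claim, $\Gamma(M_1)\cap\Gamma(M_2)\neq\{0\}$. By Claim~\ref{claim:28}, both graphs are indeed $l$-subspaces of $\overline V$ meeting $\overline A_0$ trivially, so the previous claim applies to them.

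It remains to translate a nonzero common vector of the two graphs into a nonzero kernel vector of $M_1-M_2$. Take $0\neq w\in\Gamma(M_1)\cap\Gamma(M_2)$. Write $w=(M_1b_1,b_1)=(M_2b_2,b_2)$ in $\overline A_0\oplus\overline B_0$. Since the decomposition is direct, the $\overline B_0$-components agree, so $b_1=b_2=:b$, and the $\overline A_0$-components agree, so $M_1b=M_2b$.

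We also need $b\neq 0$. If $b=0$, then $w=(M_1 0,0)=0$, a contradiction. Hence $0\neq b\in\ker(M_1-M_2)$, as required.

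There is no real obstacle here. The only point needing care is that the graph correspondence of Claim~\ref{claim:28} is used in the direction from the linear map $M$ to the subspace $\Gamma(M)$, and that a nonzero vector of the graph has a nonzero $\overline B_0$-component. This follows because $\Gamma(M)\cap\overline A_0=\{0\}$, which is shown in the proof of Claim~\ref{claim:28}.
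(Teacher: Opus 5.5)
Your proof is correct and matches the paper's argument: you pass from $g_i(M_i)$ to $f_i(\Gamma(M_i))$, use the preceding claim to get a nonzero common vector $(M_1b,b)=(M_2b,b)$ of the two graphs, and read off $0\neq b\in\ker(M_1-M_2)$. One small wording point: the argument you give is direct, not by contraposition.
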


\begin{proof}
Suppose that $g_1(M_1)g_2(M_2)>0$, that is, $f_1(\Gamma(M_1))f_2(\Gamma(M_2))>0$. Then, $\Gamma(M_1)\cap\Gamma(M_2)\neq 0$, and there is some
non-zero $(a,b)\in\Gamma(M_1)\cap\Gamma(M_2)$. In this case,
$(M_1b,b)=(M_2b,b)$, or equivalently $(M_1-M_2)b=0$, 
where $0\neq b\in\overline B_0$. Thus, $\ker(M_1-M_2)\neq \{0\}$.
\end{proof}

For each $C\in\GG\subset\FF_3$ in Claim~\ref{claim:25} 
and $\ol V=V/C=\ol A_0\oplus \ol B_0$, 
we apply Proposition~\ref{prop:20} 
with $A:=\overline A_0$, $B:=\overline B_0$, $f:=g_1$, $g:=g_2$, and $n:=l$. 
Then one of the following holds.

\begin{enumerate}
\item[(i)] There are $0\ne u\in \overline B_0$ and $v\in \overline A_0$ such that
$\E[g_i\mid Mu=v]\geq q^{-O_K(1)}$ for $i=1,2$.
\item[(ii)] There are $0\ne\phi\in \overline A_0^*$ and $\psi\in \overline B_0^*$ such that $\E[g_i\mid \phi\circ M=\psi]\geq q^{-O_K(1)}$ for $i=1,2$.
\end{enumerate}

\begin{claim}\label{claim:30}
There exists some $C\in\GG$ satisfying (ii). 
\end{claim}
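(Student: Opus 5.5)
The plan is to argue by contradiction. Suppose that no $C\in\GG$ satisfies (ii). Since Proposition~\ref{prop:20} applies to every $C\in\GG$, each such $C$ then satisfies (i). We translate (i) back into a statement about $(l+1)$-dimensional subspaces of $V$ containing $C$. Lemma~\ref{lem:21} shows there are only $q^{O_K(l)}$ such subspaces, and this contradicts the fact that $|\GG|=(1-o(1))|\FF_3|$ is huge.

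\emph{Step 1: interpret (i) geometrically.} Fix $C\in\GG$ with witnesses $0\neq u\in\ol B_0$ and $v\in\ol A_0$. Put $w:=(v,u)\in\ol V=\ol A_0\oplus\ol B_0$, which is nonzero since $u\neq0$. Then $Mu=v$ holds iff $w\in\Gamma(M)$.
\begin{itemize}
\item By \eqref{eq:6}, the set $\{M:Mu=v\}$ has size $q^{l(l-1)}$.
\item So $\E[g_1\mid Mu=v]\geq q^{-O_K(1)}$ says that $\sum_{\ol F\ni w}f_1(\ol F)\geq q^{l^2-l-O_K(1)}$.
\item Unwinding the definition of $f_1$ (multiplying by $q^{l^2}$) gives
\[
|\{F\in\FF_1: F\cap C=\{0\},\ \pi_C(w)\text{-line}\leq\pi_C(F)\}|\geq q^{2l^2-l-O_K(1)}.
\]
\end{itemize}
Let $R_C:=\pi_C^{-1}(\langle w\rangle)\in\qbinom V{l+1}$, so that $C\leq R_C$. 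If $\pi_C(F)\ni w$, then $F$ contains a nonzero vector of $R_C$, so $F\cap R_C\neq0$. By Claim~\ref{claim:26}, $|\FF_1|=q^{2l^2-l+O_K(1)}$. Hence
\[
|\{F\in\FF_1:F\cap R_C\neq0\}|\geq q^{-K_0}|\FF_1|
\]
for a suitable constant $K_0=K_0(K)$.

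\emph{Step 2: apply Lemma~\ref{lem:21}.} Claim~\ref{claim:26} also gives $|\FF_1|\geq q^{-\tilde K}\qbinom{3l-1}l$. Enlarging $K_0$ if necessary, we apply Lemma~\ref{lem:21} with $\AA:=\FF_1$ and $\RR:=\{R_C:C\in\GG\}$. This yields $|\RR|\leq q^{O_K(l)}$.

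\emph{Step 3: count and conclude.} The map $C\mapsto R_C$ sends each $C\in\GG$ to some $R\in\RR$ with $C\leq R$. Any fixed $R\in\RR$ contains at most $\qbinom{l+1}{l}=q^{O(l)}$ subspaces of dimension $l$. Therefore
\[
|\GG|\leq|\RR|\,q^{O(l)}=q^{O_K(l)}.
\]
On the other hand, Claims~\ref{claim:25} and~\ref{claim:26} give $|\GG|=(1-o(1))|\FF_3|=q^{2l^2-l+O_K(1)}$. These two bounds are incompatible for large $l$, so some $C\in\GG$ must satisfy (ii).

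The main obstacle is Step 1. One has to check carefully that the conditional density in $\Hom(\ol B_0,\ol A_0)$ converts, through the graph bijection (Claim~\ref{claim:28}) and the $q^{l^2}$ lifts, into a constant-fraction lower bound for $|\{F\in\FF_1:F\cap R_C\neq0\}|$. The exponents must match, with loss only $q^{-O_K(1)}$, so that Lemma~\ref{lem:21} applies with a constant $K_0$ independent of $l$. The only condition we use is the one on $g_1$; the symmetric condition on $g_2$ is not needed for this step.
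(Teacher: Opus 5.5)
Your proposal is correct and follows essentially the same argument as the paper. Assuming only (i) holds for every $C\in\GG$, you form the $(l+1)$-space $R_C=\pi_C^{-1}(\langle (v,u)\rangle)\supset C$, convert the conditional density of $g_1$ into $|\{F\in\FF_1:F\cap R_C\neq 0\}|\geq q^{-K_0}|\FF_1|$, apply Lemma~\ref{lem:21} to get $|\RR|\leq q^{O_K(l)}$, and contradict $|\GG|\geq q^{2l^2-l-O_K(1)}$ via the $\qbinom{l+1}{l}$ bound on each fibre. The only slip is notational: in Step~1, ``$\pi_C(w)$-line'' should read $\langle w\rangle$, since $w$ already lies in $\ol V$.
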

\begin{proof}
Suppose, to the contrary, that there is no $C\in\GG$ satisfying (ii).
Then, for each $C\in\GG$, only (i) holds and there are $0\neq u_C\in \ol B_0$ 
and $v_C\in \ol A_0$ such that $\E[g_i\mid Mu_C=v_C]\geq q^{-O_K(1)}$ for $i=1,2$.
Consider a line $\ol L_C=\langle (v_C,u_C)\rangle\leq\ol V$.
Let $L_C=\pi_C^{-1}(\ol L_C)$.
Then $\ker\pi_C=C\subset L_C$ and $\dim L_C=\dim C+\dim \ol L_C=l+1$.

Let $\MM_C=\{M\in\Hom(\ol B_0,\ol A_0):Mu_C=v_C\}$. 
Then $|\MM_C|=q^{l(l-1)}$ from \eqref{eq:6}.
We have $\sum g_i(M)=\sum f_i(\Gamma(M))=\sum q^{-l^2}|\{F\in\FF_i:\pi_C(F)=\Gamma(M)\}|$, 
where the sum is taken over all $M\in\MM_C$.
If $M\in\MM_C$, then $\overline L_C=\langle (Mu_C,u_C)\rangle\subset\Gamma(M)$.
So, if $\pi_C(F)=\Gamma(M)$, then 
choose $0\ne\ol x\in\ol L_C\subset\Gamma(M)$ and then choose $x\in F$ with
$\pi_C(x)=\ol x$; hence $0\ne x\in F\cap L_C$. 

Let $\RR=\{L_C\in\qbinom{V}{l+1}:C\in\GG\}$ be the family of distinct subspaces.
Recall that $|\FF_i|\geq q^{-\tilde K}\qbinom{3l-1}l$ for some 
$\tilde K=\tilde K(K)$ by Claim~\ref{claim:26}. 
We want to apply Lemma~\ref{lem:21} with $\FF_1$ for $\AA$, and 
$K':=\max\{\tilde K,J\}$ for $K_0$, where $J=J(K)$ will be specified below.
To this end, we claim that $|\{F\in\FF_i:F\cap L_C\neq 0\}|\geq q^{-J} |\FF_i|$ 
for some $J=J(K)$.  
Indeed, we have
\begin{align*}
|\{F\in\FF_i:F\cap L_C\neq 0\}|
&\geq \sum_{M\in\MM_C} |\{F\in\FF_i:\pi_C(F)=\Gamma(M)\}|
=q^{l^2}\sum_{M\in\MM_C} g_i(M)\\
&=q^{l^2}|\MM_C|\cdot\E[g_i\mid Mu_C=v_C]\geq q^{2l^2-l-O_K(1)}
\geq q^{-J}|\FF_i|, 
\end{align*}
where we used $|\FF_i|=q^{2l^2-l+O_K(1)}$ by Claim~\ref{claim:26} in the last 
inequality.
Hence Lemma~\ref{lem:21} gives $|\RR|\leq q^{O_{K'}(l)}$. 
Since $K'$ depends only on $\tilde K(K)$ and $J(K)$, we have 
$|\RR|\leq q^{O_{K}(l)}$. 
By the map $\GG\ni C\mapsto L_C\in\RR$, each $L\in\RR$ corresponds to 
at most $\qbinom{l+1}{l}=q^{l+O(1)}$ subspaces in $\GG$.
Thus, $|\GG|\leq |\RR|\qbinom{l+1}l\leq q^{O_K(l)}$.
On the other hand, 
$|\GG|\geq(1-o(1))|\FF_3|\geq q^{2l^2-l-O_K(1)}$,
which is impossible for large $l$.
\end{proof}

By Claim~\ref{claim:30}, we can fix $C\in\GG$, 
$\ol V=\ol A_0\oplus \ol B_0$, $\phi$ and $\psi$ satisfying 
the condition (ii).
Let $\overline H_C=\{(a,b)\in \ol A_0\oplus \ol B_0:  \phi(a)=\psi(b)\}$.
Then $\ol H_C$ is a hyperplane in $\ol V$. To see this, let 
$f(a,b):=\phi(a)-\psi(b)$ be a non-zero linear map from $\ol V$ to $\F_q$,
then $\dim\ol H_C=\dim\ker f=\dim\ol V-\dim\F_q=2l-1$.

Suppose that $\phi\circ M=\psi$. We note that
\[
\phi\circ M=\psi\Leftrightarrow
\forall b\in\ol B_0,\,\phi(Mb)=\psi(b)\Leftrightarrow
\forall b\in\ol B_0,\,(Mb,b)\in\ol H_C\Leftrightarrow
\Gamma(M)\subset\ol H_C.
\]
Let $H_C=\pi_C^{-1}(\overline H_C)$.
Then $\ker\pi_C=C\subset H_C$, and $H_C$ is a hyperplane in $V$.
Since $\pi_C(F)=\Gamma(M)$ implies $F\subset H_C$, we have
$\AA_i:=\bigcup_{\phi\circ M=\psi}\{F\in\FF_i:\pi_C(F)=\Gamma(M)\}\subset\FF_i\cap\qbinom{H_C}l$. Then we have
\begin{align*}
 \left|\AA_i\right|&=
\sum_{\phi\circ M=\psi}|\{F\in\FF_i:\pi_C(F)=\Gamma(M)\}|
=q^{l^2}\sum_{\phi\circ M=\psi}f_i(\Gamma(M))\\
&=q^{l^2}\sum_{\phi\circ M=\psi}g_i(M)\geq q^{2l^2-l-O_K(1)},
\end{align*}
where we used \eqref{eq:7} and
$\sum_{\phi\circ M=\psi}g_i(M)=q^{l(l-1)}\E[g_i\mid\phi\circ M=\psi]\geq 
q^{l(l-1)-O_K(1)}$. 

Therefore, all $\AA_1,\AA_2$, and $\FF_3$ are of size at least $q^{2l^2-l-O_K(1)}$.
Choose a fixed constant $c=c(K)$ large enough to dominate all the implicit
$O_K(1)$ terms in the three lower bounds above. 
Then each of $\AA_1,\AA_2$, and $\FF_3$ has size at least $q^{2l^2-l-c}$,
and we may then apply Lemma~\ref{lem:22} to any two of $\AA_1,\AA_2$, and $\FF_3$.
We first apply Lemma~\ref{lem:22} to $\AA_1,\AA_2$.  
After proving $\FF_3\subset\qbinom{H_C}{l}$, we may also
apply it to $\AA_2,\FF_3$ (and symmetrically).
First we show that $\FF_3\subset\qbinom{H_C}l$.
Suppose, to the contrary, that there is $C'\in\FF_3$ with 
$C'\not\subset H_C$. 
By Lemma~\ref{lem:22}, there are $A\in\AA_1$ and $B\in\AA_2$ such that
$A+B+C'=V$. This contradicts the 3-cross union property.

Next we show that $\FF_1\subset\qbinom{H_C}l$.
Suppose, to the contrary, that there is $A'\in\FF_1$ with 
$A'\not\subset H_C$. 
We substitute $(\AA_2, \FF_3, A')$ into $(\AA, \BB, C)$ in Lemma~\ref{lem:22}. 
Then there are $B\in\AA_2$ and $C''\in\FF_3$ such that
$A'+B+C''=V$, contradicting the 3-cross union property.

As in the previous case, we also have $\FF_2\subset\qbinom{H_C}l$,
and consequently, all three families $\FF_1,\FF_2$, and $\FF_3$ are 
contained in the same hyperplane $H_C$.
This completes the proof of Proposition~\ref{prop:24}.
\end{proof}

\begin{proof}[Proof of Proposition~\ref{prop:10}]
Let $\FF_1,\FF_2,\FF_3\subset\qbinom{V_{3l}}l$ be 3-cross union families.
If $\|\FF_1\|_l+\|\FF_2\|_l+\|\FF_3\|_l<3(3l-1)$, then there is nothing to prove.
Suppose that $\|\FF_1\|_l+\|\FF_2\|_l+\|\FF_3\|_l\geq 3(3l-1)$.
Apply Proposition~\ref{prop:24} with $K=1$.
Since (i) of ${\mathsf P_3(1)}$ holds, 
all $\FF_i$ are contained in a common hyperplane $H$, and so 
$|\FF_i|\leq |\qbinom Hl|=\qbinom{3l-1}l$. This and (i) yield
$|\FF_i|=\qbinom{3l-1}l$ for all $i$, and $\FF_1=\FF_2=\FF_3=\qbinom{H}l$. 
Consequently, we have $\|\FF_1\|_l+\|\FF_2\|_l+\|\FF_3\|_l\leq 3(3l-1)$, and 
if equality holds, then $\FF_1=\FF_2=\FF_3=\qbinom{H}l$ for some hyperplane $H$.
Conversely, if $\FF_1=\FF_2=\FF_3=\qbinom{H}l$ for some hyperplane $H$,
then we clearly have $\|\FF_1\|_l+\|\FF_2\|_l+\|\FF_3\|_l=3(3l-1)$.
\end{proof}

\subsection{Proof of Proposition~\ref{prop:11}}
We prove Proposition~\ref{prop:11} by induction on the number of families.
To this end, we need the following slightly stronger, technical assertion
$\mathsf P_m(K)$ for $K>0$. Let $V_{ml}=\F_q^{ml}$.

\begin{assertion}[$\mathsf P_m(K)$]
Let $\FF_1,\ldots,\FF_m\subset\qbinom{V_{ml}}{l}$ be $m$-cross union families.
If at least one of the following two conditions holds,
\begin{itemize}
\item[\rm (i)]
$\sum_{i=1}^m\|\FF_i\|_l\geq m(ml-1)$,
\item[\rm (ii)]
$|\FF_i|\geq q^{-K}\qbinom{ml-1}{l}$ 
for all $i\in[m]$,
\end{itemize}
then there is a hyperplane $H\leq V_{ml}$ such that
$\FF_i\subset\qbinom{H}{l}$ for all $i\in[m]$.
\end{assertion}

Note that once $\mathsf P_m(K)$ is known for one fixed $K$, it follows that
all $m$-cross union families satisfy $\sum_{i=1}^m\|\FF_i\|_l\leq m(ml-1)$.
Indeed, otherwise (i) holds, and then $\FF_i\subset\qbinom Hl$ implies 
$\|\FF_i\|_l\leq ml-1$ for all $i$.
Note also that under either condition, all families are non-empty for sufficiently large $l$. Theorem~\ref{thm:7} shows that $\mathsf P_2(K)$ does not hold.
We prove the following.
\begin{proposition}\label{prop:32}
For every $m\geq 3$ and every fixed $K>0$, the assertion $\mathsf P_m(K)$ holds
for all sufficiently large $l$, depending on $m$ and $K$.
\end{proposition}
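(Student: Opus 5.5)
We argue by induction on $m$; the base case $m=3$ is Proposition~\ref{prop:24}. Let $m\geq 4$, assume $\mathsf P_{m-1}(K')$ for every fixed $K'$ and all large $l$, and let $\FF_1,\ldots,\FF_m\subset\qbinom{V_{ml}}l$ be $m$-cross union families satisfying (i) or (ii) of $\mathsf P_m(K)$. Set $\delta_i=ml-x_i$ with $x_i=\|\FF_i\|_l$ in case (i), and $\delta_i=1$ in case (ii). In both cases $\sum_i\delta_i\leq m$ and $\mu_i\geq q^{-\delta_il-O_K(1)}$, so Lemma~\ref{lem:17} applies.

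\textbf{Step 1: quotient by a member of one family.} Fix $s\in[m]$ and take $\GG_s\subset\FF_s$ from Lemma~\ref{lem:17}. For $D\in\GG_s$, the quotient families $\ol\FF_i^D\subset\qbinom{V/D}l$ with $i\neq s$ are $(m-1)$-cross union in $V/D\cong V_{(m-1)l}$. Their sizes satisfy
\[
|\ol\FF_i^D|\geq q^{-l^2}\tfrac12\theta_0\mu_i=q^{-\delta_il-O_K(1)}\qbinom{(m-1)l}l .
\]
This bound holds because $\theta_0=q^{l^2}\qbinom{(m-1)l}l$.

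\textbf{Step 2: show every $\delta_i$ is close to $1$.} Apply the already-known consequence of $\mathsf P_{m-1}$, namely $\sum_{i\neq s}\|\ol\FF_i^D\|_l\leq (m-1)((m-1)l-1)$. By the bound above, $\|\ol\FF_i^D\|_l\geq (m-1)l-\delta_i-O_K(1/l)$. Hence $\sum_{i\neq s}\delta_i\geq m-1-O_K(1/l)$ for every $s$. Combined with $\sum_i\delta_i\leq m$, this gives $\delta_s\leq 1+O(1/l)$ for each $s$. Summing the $m$ inequalities $\sum_{i\neq s}\delta_i\geq m-1-O(1/l)$ gives $\sum_i\delta_i\geq m-O(1/l)$, and then $\delta_s\geq 1-O(1/l)$. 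So $|\FF_i|=q^{(m-1)l^2-l+O_K(1)}$, that is, $|\FF_i|\geq q^{-\tilde K}\qbinom{ml-1}l$ for some $\tilde K=\tilde K(K)$, exactly as in Claim~\ref{claim:26}. Feeding this back into Step 1, the quotient families satisfy condition (ii) of $\mathsf P_{m-1}(K')$ for a suitable $K'=K'(K)$.

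\textbf{Step 3: build a common hyperplane.} By the induction hypothesis, for each $D\in\GG_s$ there is a hyperplane $\ol H\leq V/D$ containing all $\ol\FF_i^D$ with $i\neq s$. Let $H_D=\pi_D^{-1}(\ol H)$, a hyperplane of $V$ containing $D$. Then every $F\in\FF_i(D)$ lies in $H_D$ for $i\neq s$, and $|\FF_i(D)|\geq q^{(m-1)l^2-l-O_K(1)}$.

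Fix one such $D\in\FF_m$ and write $H:=H_D$. We want to show that every family lies in $H$. Suppose some $F'\in\FF_j$ has $F'\not\subset H$. Among the $m-1$ other families, at least $m-2\geq 2$ have indices different from $m$; their subfamilies $\FF_i(D)\cap\qbinom Hl$ are large. We need an $(m-1)$-family analogue of Lemma~\ref{lem:22}. The plan is to find members $F_i$ from the other $m-1$ families with $\sum_{i\neq j}F_i+F'=V$, which contradicts the $m$-cross union property.

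To do this, first choose members of $m-3$ of the remaining families greedily, one at a time. At each step, use Lemma~\ref{lem:14}-type averaging to keep the two remaining families large after passing to the quotient by the span of the chosen subspaces (which is a direct sum). After these quotients we reach a $3l$-dimensional space in which the image of $H$ is a hyperplane. There Lemma~\ref{lem:22} applies to two large families inside the hyperplane together with $F'$. Running this argument with $F'$ taken from each $\FF_j$ (including $j=m$, using families of size $q^{(m-1)l^2-l-O(1)}$ inside $H$) shows $\FF_j\subset\qbinom Hl$ for all $j$.

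\textbf{Expected main obstacle.} The hardest part is this last reduction: keeping the families inside $H$ large after successive quotients, and making sure $F'$ stays transversal to the image of $H$. I would handle it by choosing each greedy member from the subset of the family that is good with respect to the spectral bound of Lemma~\ref{lem:14}, applied inside $H$. The losses are $q^{-O(l)}$ in measure against $q^{-\Omega(l)}$ error, so the choice is possible for large $l$.
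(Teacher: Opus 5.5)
The proposal is correct but takes a different route from the paper. Your Steps 1--2 and the construction of $H=\pi_D^{-1}(\ol H)$ match the paper's Claim~\ref{claim:33} and the paragraph after it. The difference is the final ``$(m-1)$-family analogue of Lemma~\ref{lem:22}''.

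\textbf{The two routes.} The paper proves this step as Claim~\ref{claim:34}. It quotients $H$ by $W=C\cap H$, which gives an $(m-1)l$-dimensional space with $m-1$ image families of width $(m-1)l-O(1/l)$. It then invokes the induction hypothesis $\mathsf P_{m-1}$, through the width-sum bound, to produce all $m-1$ members summing to $\ol H$ at once. You instead fix $m-3$ members $E_1,\dots,E_{m-3}\subset H$ by hand and quotient by $E=E_1\oplus\cdots\oplus E_{m-3}$. This lands in $V/E\cong V_{3l}$, where $H/E$ is a hyperplane and the three-family Lemma~\ref{lem:22} applies directly. The trade-off: the paper reuses the inductive hypothesis and needs no general-position choices, while your route shows that only the $m=3$ lemma is needed here, at the cost of choosing members in general position.

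\textbf{What you should tighten.}
\begin{enumerate}
\item[(a)] The condition to enforce is $F'\cap E=\{0\}$ together with directness: each $E_k$ must avoid $F'+E_1+\cdots+E_{k-1}$. Transversality of $\pi_E(F')$ to $H/E$ is then automatic, because $E\le H$.
\item[(b)] No Lemma~\ref{lem:14}-type averaging is needed. The families have density $q^{-O(1)}$ in $\qbinom Hl$. At most $q^{(m-1)l^2-3l+O(1)}$ members of $\qbinom Hl$ meet any fixed subspace of dimension at most $(m-3)l$. So the general-position choice is trivially possible, and every such $E$ keeps a $(1-o(1))$ fraction of each remaining family.
\item[(c)] Consequently the images in $\qbinom{H/E}l$ have size at least $q^{-(m-3)l^2}(1-o(1))q^{(m-1)l^2-l-O(1)}=q^{2l^2-l-O(1)}$. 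This is exactly the threshold of Lemma~\ref{lem:22}. Note that a $q^{-\Theta(l)}$ loss here would be fatal, so it is this constant-factor bound that matters, not ``losses $q^{-O(l)}$''.
\item[(d)] You must settle $j=m$ first. For $j<m$, the member taken from $\FF_m$ must lie in $H$. $D$ itself cannot serve, since any $F'\in\FF_j$ with $F'\not\subset H$ must meet $D$; otherwise $F'\in\FF_j(D)\subset\qbinom Hl$. Once $\FF_m\subset\qbinom Hl$ is known, $\FF_m$ is just one more large family inside $H$, exactly as in the paper.
\end{enumerate}
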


\begin{proof}
We prove $\mathsf P_m(K)$ by induction on $m$.  
The initial step $m=3$ follows from Proposition~\ref{prop:24}.
Let $m\geq 4$. Assume that $\mathsf P_{m-1}(K')$ holds for every fixed $K'>0$.
Fix $K>0$, and let
$\FF_1,\ldots,\FF_m\subset\qbinom{V_{ml}}{l}$ be $m$-cross union families.
Suppose that they satisfy either condition~(i) or condition~(ii) in the
definition of $\mathsf P_m(K)$.  Let $V=V_{ml}$, $\Omega=\qbinom{V}{l}$,
$\mu_i=\frac{|\FF_i|}{|\Omega|}$. As in \S~\ref{sec:degenerate subsp},
for $D\in\Omega$, $\FF\subset\Omega$, and a quotient map $\pi_D:V\to\ol V:=V/D$,
we define $\FF(D)$ and $\ol\FF^D$ by \eqref{eq:12}.

\begin{claim}\label{claim:33}
There exist a constant $K_1=K_1(m,K)$ and a subspace $D\in\FF_m$ such that
$\ol\FF_1^D,\ldots,\ol\FF_{m-1}^D\subset\qbinom{\ol V}l$ are $(m-1)$-cross union families with $|\ol\FF_i^D|\geq q^{-K_1}\qbinom{(m-1)l-1}l$
and $|\FF_i(D)|\geq q^{-K_1}\qbinom{ml-1}l$ for all $i\in[m-1]$.
Moreover, $|\FF_m|\geq q^{-K_1}\qbinom{ml-1}l$.
\end{claim}
\begin{proof}
Suppose first that the condition~(i) of $\mathsf P_m(K)$ holds.  
Let $\delta_i:=ml-\|\FF_i\|_l$.
Then, $\delta_i\geq 0$, and (i) is equivalent to 
\begin{align}\label{eq:15}
\delta_1+\cdots+\delta_m\le m. 
\end{align}
By definition, $\mu_i=\qbinom{ml-\delta_i}l/\qbinom{ml}l=q^{-\delta_il+O_m(1)}$.
Let $s\in[m]$ be fixed, and let $M_s:=[m]\setminus\{s\}$. 
Then, by Lemma~\ref{lem:17}, there is $D_s\in\GG_s$ such that 
$\ol\FF_i^{D_s}$  ($i\in M_s$) are $(m-1)$-cross union with 
$|\ol\FF_i^{D_s}|\geq\frac12\qbinom{(m-1)l}l\qbinom{ml-\delta_i}l/\qbinom{ml}l
\geq\frac12\qbinom{(m-1)l-\delta_i}l$,
where the last inequality is equivalent to $q^{(m-1)l-\delta_i-j}(q^l-1)(q^{\delta_i}-1)\geq 0$ for $0\leq j<l$.
Choose a constant $c>0$ so that $q^{-c}\leq 1/2$.
Writing $x=(m-1)l-\delta_i$, we have
\[
\frac{\qbinom{(m-1)l-\delta_i-c/l}l}{\qbinom{(m-1)l-\delta_i}l}=
 \frac{\qbinom{x-c/l}{l}}{\qbinom{x}{l}}
 =q^{-c}\prod_{r=1}^{l}
\frac{1-q^{-(x-l-c/l+r)}}{1-q^{-(x-l+r)}}
 \leq q^{-c}\leq\frac12.
\]
Thus, $|\ol\FF_i^{D_s}|\geq\frac12\qbinom{(m-1)l-\delta_i}l\geq\qbinom{(m-1)l-\delta_i-c/l}l$.
On the other hand, as we noted just before the statement of the proposition,
$\mathsf P_{m-1}(1)$ implies $\sum_{i\in M_s}\|\ol\FF_i^{D_s}\|_l\leq (m-1)((m-1)l-1)$.
Using these two inequalities, we have $\sum_{i\in M_s}\delta_i\geq m-1-O_m(1/l)$.
By this with \eqref{eq:15}, we get $\delta_s\leq 1+O_m(1/l)$.
Since $s$ can be chosen arbitrarily, we have $\delta_i\leq 1+O_m(1/l)$,
 and so $\mu_i\geq q^{-l-O_m(1)}$ for all $i\in[m]$. 
In particular, $|\FF_m|\geq q^{-C_m}\qbinom{ml-1}l$, where $C_m>0$ is a constant
depending on $m$.
Now to apply Lemma~\ref{lem:17} once again, set $\delta_i=1$ for all $i\in[m]$
so that $\sum_{i=1}^m\delta_i=m$ and $\mu_i\geq q^{-l-O_m(1)}$.
Then, by Lemma~\ref{lem:17}, we obtain $D\in\FF_m$ and $(m-1)$-cross union families 
$\ol\FF_1^D,\ldots,\ol\FF_{m-1}^D\subset\qbinom{\ol V}l$ with 
$|\FF_i(D)|\geq q^{-C'_m}\qbinom{ml-1}l$ 
and $|\ol\FF_i^D|\geq q^{-C''_m}\qbinom{(m-1)l-1}l$ for all $i\in[m-1]$.

Suppose next that the condition~(ii) holds. 
In this case we have $\mu_i\geq q^{-l-O_{m,K}(1)}$ for $i\in[m]$.
Then we apply Lemma~\ref{lem:17} with $\delta_i=1$ to get the statement of the claim.
\end{proof}

By Claim~\ref{claim:33}, we can apply (ii) of $\mathsf P_{m-1}(K_1)$ to 
$(m-1)$-cross union families $\ol{\FF}_1^D,\ldots,\ol{\FF}_{m-1}^D$ with 
$|\ol\FF_i^D|\geq q^{-K_1}\qbinom{(m-1)l-1}l$.
Then, there is a hyperplane 
$\ol H\leq\ol V$ such that $\ol\FF_i^D\subset\qbinom{\ol H}l$ for $i\in[m-1]$.
Let $H=\pi_D^{-1}(\ol H)$.  Then $H$ is a hyperplane of $V$ with $D\subset H$,
and $\FF_i(D)\subset\qbinom{H}{l}$ for $i\in[m-1]$.

\begin{claim}\label{claim:34}
For a given constant $K>0$ and all sufficiently large $l$, the following holds.
Let $H\leq V$ be a hyperplane, and let $C\in\qbinom{V}{l}$ satisfy
$C\not\subset H$.  If 
$\AA_1,\ldots,\AA_{m-1}\subset\qbinom{H}{l}$ satisfy 
$|\AA_i|\geq q^{-K}\qbinom{ml-1}{l}$ for all $i$, then
there are $A_i\in\AA_i$ such that
$A_1+\cdots+A_{m-1}+C=V$.
\end{claim}

\begin{proof}
The proof proceeds in exactly the same way as in the proof of 
Lemma~\ref{lem:22}, so we use the same notation and only include a sketch.
Let $W=C\cap H$ and $\ol H=H/W$.
In this case, we have $\dim\ol H=(m-1)l$, $N\leq\qbinom{l-1}1\qbinom{ml-2}{l-1}=q^{(m-1)l^2-(m-1)l+O_{m}(1)}$, and
$|\AA_i|\geq q^{(m-1)l^2-l-O_{m,K}(1)}$.
Then $\AA_{0,i}:=\{A\in\AA_i:A\cap W=\{0\}\}$ has size at least 
$q^{(m-1)l^2-l-O_{m,K}(1)}$, and $\ol \AA_i:=\{\pi(A):A\in\AA_{0,i}\}$
has size at least $q^{-l(l-1)}|\AA_{0,i}|\geq q^{(m-2)l^2-c}$, where $c=O_{m,K}(1)$.
Thus, we have $\|\ol\AA_i\|_l\geq(m-1)l-O_{m,K}(1/l)$, and
$\sum_{i=1}^{m-1}\|\ol\AA_i\|_l>(m-1)\left((m-1)l-1\right)$ for sufficiently large $l$. 
Since the families $\ol\AA_i$ satisfy (i) of $\mathsf P_{m-1}(K)$, if 
they are $(m-1)$-cross union, then they are all contained in the same 
hyperplane, and so $\sum_{i=1}^{m-1}\|\ol\AA_i\|_l\leq(m-1)\left((m-1)l-1\right)$, a contradiction.
Thus the families $\ol\AA_i$ are not $(m-1)$-cross union, 
and there are $\ol A_i\in\ol\AA_i$ such that
$\ol A_1+\cdots+\ol A_{m-1}=\ol H$. Let $A_i$ be a lift of
$\ol A_i$. Then $A_1+\cdots+A_{m-1}+W=H$. 
Finally, let $C=W\oplus\langle u\rangle$ for some vector $u\not\in H$,
and we have $A_1+\cdots+A_{m-1}+C=V$. 
\end{proof}

Recall that $|\FF_i(D)|\geq q^{-K_1}\qbinom{ml-1}{l}$ for $i\in[m-1]$. 
If some $C\in\FF_m$ is not contained in $H$, then we apply
Claim~\ref{claim:34} to the families $\FF_1(D),\ldots,\FF_{m-1}(D)$.
This yields a contradiction to the $m$-cross union property of
$\FF_1(D),\ldots,\FF_{m-1}(D)$, and $\FF_m$.
Hence $\FF_m\subset\qbinom{H}{l}$.

Now fix $j<m$. If some $C\in\FF_j$ is not contained in $H$, 
apply Claim~\ref{claim:34} to the $m-1$ families $\FF_i(D)$ 
$(i\in[m-1]\setminus\{j\})$, 
and $\FF_m$, which satisfies $|\FF_m|\geq q^{-K_1}\qbinom{ml-1}{l}$ by Claim~\ref{claim:33}. 
Then we get a contradiction again. Since $j$ was arbitrary,
we have $\FF_i\subset\qbinom{H}{l}$ for all $i\in[m]$, which is 
the conclusion of $\mathsf P_m(K)$. This completes the inductive proof of
Proposition~\ref{prop:32}.
\end{proof}

\begin{proof}[Proof of Proposition~\ref{prop:11}]
By Proposition~\ref{prop:32}, the assertion
$\mathsf P_r(1)$ holds. Let
$\FF_1,\ldots,\FF_r\subset\qbinom{V_{rl}}{l}$ be $r$-cross union.  If
$\sum_{i=1}^r\|\FF_i\|_l\geq r(rl-1)$,
then (i) of $\mathsf P_r(1)$ gives a hyperplane
$H\leq V_{rl}$ containing every family.  Hence
$\|\FF_i\|_l\leq rl-1$ for all $i$, and therefore
$\sum_{i=1}^r\|\FF_i\|_l\leq r(rl-1)$.

If equality holds, $\mathsf P_r(1)$ gives a common
hyperplane $H$. Since every width is at most $rl-1$ and their sum is
$r(rl-1)$, we have
$\|\FF_1\|_l=\cdots=\|\FF_r\|_l=rl-1$.
Thus each family has the same size as $\qbinom{H}{l}$ and is contained in
it, so
$\FF_1=\cdots=\FF_r=\qbinom{H}{l}$.
Conversely, these common hyperplane families are clearly $r$-cross union
and attain equality.
\end{proof}

\subsection{Proof of Theorem~\ref{thm:7}}

Let $l\geq 2$. First we deal with the case $n=2l$.
For the proof, we use the following version of a corollary of 
Theorem~\ref{thm:1} or Theorem~\ref{thm:3}, which was first proved by Newman~\cite{Newman} and then further extended by Tanaka~\cite{Tanaka} to $t$-union families.

\begin{corollary}[\cite{Newman}]\label{cor:35}
 Let $l\geq 2$. If $\FF\subset\qbinom {V_{2l}}l$ satisfies $F+F'\neq V_{2l}$
for all $F,F'\in\FF$, then $|\FF|\leq\qbinom{2l-1}l$. Moreover, equality holds
if and only if $\FF=\qbinom Hl$ for some $H\in\qbinom {V_{2l}}{2l-1}$, or
$\FF=\{F\in\qbinom {V_{2l}}l:L\leq F\}$ for some $L\in\qbinom {V_{2l}}1$.
\end{corollary}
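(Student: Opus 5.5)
The plan is to pass to orthogonal complements and reduce to Theorem~\ref{thm:1} with $r=2$ and $n=2k$, where $k=l$. Fix a nondegenerate bilinear form on $V_{2l}$ and set $\FF^\perp=\{F^\perp:F\in\FF\}\subset\qbinom{V_{2l}}{l}$. Since $(F+F')^\perp=F^\perp\cap F'^\perp$ and $\perp$ is an involution, the condition $F+F'\neq V_{2l}$ for all $F,F'\in\FF$ is equivalent to $F^\perp\cap F'^\perp\neq\{0\}$ for all pairs. Thus $\FF^\perp$ is a $2$-wise intersecting family of $l$-subspaces of a $2l$-dimensional space. The map $F\mapsto F^\perp$ is a bijection, so $|\FF^\perp|=|\FF|$.

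The hypothesis $(r-1)n\geq rk$ of Theorem~\ref{thm:1} reads $2l\geq 2l$, so it holds. Hence
\[
|\FF|=|\FF^\perp|\leq\qbinom{2l-1}{l-1}=\qbinom{2l-1}{l},
\]
using the symmetry $\qbinom nl=\qbinom n{n-l}$.

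For the equality case, Theorem~\ref{thm:1} (with $r=2$, $n=2k$) says $\FF^\perp$ is either $\{G:L\leq G\}$ for a line $L$, or $\qbinom{H}{l}$ for a hyperplane $H$. We translate each back through $\perp$, using that $L\leq G$ iff $G^\perp\leq L^\perp$.
\begin{itemize}
\item If $\FF^\perp=\{G:L\leq G\}$, then $\FF=\{F:F\leq L^\perp\}=\qbinom{L^\perp}{l}$, and $L^\perp$ is a hyperplane.
\item If $\FF^\perp=\qbinom Hl$, then $\FF=\{F:H^\perp\leq F\}$, and $H^\perp$ is a line.
\end{itemize}
Conversely, both configurations satisfy the hypothesis. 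Two $l$-subspaces of a hyperplane cannot span $V_{2l}$. Two $l$-subspaces through a common line have sum of dimension at most $2l-1$.

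The only point needing care is matching the roles under duality: hyperplanes correspond to lines, and containment reverses. Alternatively, Theorem~\ref{thm:3} with $t=1$, $\AA=\BB=\FF^\perp$ gives $|\FF|^2\leq\qbinom{2l-1}{l-1}^2$ together with the same equality characterization, so either reference suffices. There is no real obstacle here.
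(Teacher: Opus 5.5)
Your proposal is correct and is the derivation the paper intends: the paper cites the corollary as a consequence of Theorem~\ref{thm:1} (or of Theorem~\ref{thm:3} with $t=1$) via the orthogonal-complement duality from the introduction. Under $F\mapsto F^\perp$ the union condition becomes the intersecting condition, the bound $\qbinom{2l-1}{l-1}=\qbinom{2l-1}{l}$ carries over, and the two extremal configurations are exchanged (lines $\leftrightarrow$ hyperplanes), exactly as in your bookkeeping.
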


\begin{proposition}\label{prop:36}
If $\FF_1,\FF_2\subset\qbinom{V_{2l}}l$ are $2$-cross union, then
$\|\FF_1\|_l+\|\FF_2\|_l\leq 2(2l-1)$. 
Moreover, if $\|\FF_1\|_l+\|\FF_2\|_l=2(2l-1)$, then 
$\FF_1=\FF_2=\qbinom Hl$ for some $H\in\qbinom {V_{2l}}{2l-1}$, or
$\FF_1=\FF_2=\{F\in\qbinom {V_{2l}}l:L\leq F\}$ for some $L\in\qbinom {V_{2l}}1$.
\end{proposition}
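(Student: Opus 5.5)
The plan is to derive the width inequality from the spectral bound in Lemma~\ref{lem:16}, and then to get the equality structure from Lemma~\ref{lem:16} together with Corollary~\ref{cor:35}.

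\emph{Trivial case.} If one family is empty, then $\|\FF_1\|_l+\|\FF_2\|_l\leq 2l<2(2l-1)$ because $l\geq 2$. So assume both families are non-empty. Write $a=|\FF_1|=\qbinom xl$ and $b=|\FF_2|=\qbinom yl$ with $x,y\in[l,2l]$. Put $N=\qbinom{2l}l$ and $M=\qbinom{2l-1}l$.

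\emph{Reformulating the bound.} By the Pascal-type identity in \S\ref{sec:q-binom} and the symmetry $\qbinom{2l-1}{l-1}=\qbinom{2l-1}{l}$, we get $N=q^lM+M=(q^l+1)M$. Now define
\[
u(x):=\frac{N}{\qbinom xl}-1 .
\]
Then $u(2l-1)=q^l$. Lemma~\ref{lem:16} reads $q^{2l}ab\leq(N-a)(N-b)$, that is,
\[
\log u(x)+\log u(y)\geq 2\log u(2l-1).
\]
The map $x\mapsto\qbinom xl$ is strictly increasing, so $u$ is strictly decreasing. It therefore suffices to show that $x+y>2(2l-1)$ forces $\log u(x)+\log u(y)<2\log u(2l-1)$.

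\emph{Key step and main obstacle.} I would aim to prove that $\Phi:=\log u$ is strictly concave on the relevant range. Then
\[
\Phi(x)+\Phi(y)\leq 2\Phi\!\left(\tfrac{x+y}2\right)<2\Phi(2l-1)
\]
whenever $x+y>2(2l-1)$, since $\Phi$ is decreasing.

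The range is small. If $x+y>2(2l-1)$ and $x,y\leq 2l$, then both $x,y>2l-2$, so only $[2l-2,2l)$ matters, and there $u\to0$ as $x\to 2l$. Write $w(x)=\log\qbinom xl$ and $h(w)=\log(Ne^{-w}-1)$. Then
\[
\Phi''=h''(w)\,w'^2+h'(w)\,w'' .
\]
The first term is negative. The second is nonnegative, because $h'<0$ and $w''\leq0$ ($w$ is a sum of $\log(q^{x-j}-1)$). So I must show
\[
|w''|\leq \frac{w'^2}{u}.
\]
On $[2l-2,2l]$ one has $u\leq q^{2l}\cdot O(1)$ and $w'\geq l\ln q$. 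Also $|w''|=(\ln q)^2\sum_j q^{x-j}/(q^{x-j}-1)^2$, which is dominated by the terms with $x-j$ small. Since $x-j\geq x-l+1\geq l-1$ here, this gives $|w''|=O(q^{-l})$.

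This pointwise estimate is the step I expect to be delicate, especially for small $l$. If it fails near the ends of the range, my fallback is a direct argument. Fix $a$. Lemma~\ref{lem:16} gives $b\leq\beta(a):=N(N-a)/\bigl(N+(q^{2l}-1)a\bigr)$. Then check, via the explicit product formula, that $\beta\bigl(\qbinom xl\bigr)\leq\qbinom{4l-2-x}l$ for $x\in[2l-2,2l]$, with equality exactly at $x=2l-1$.

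\emph{Equality.} Suppose $\|\FF_1\|_l+\|\FF_2\|_l=2(2l-1)$. Strictness in the previous step forces $x=y=2l-1$, so $|\FF_1|=|\FF_2|=M$. The second part of Lemma~\ref{lem:16} then gives $\FF_1=\FF_2=:\FF$. Since the two families are $2$-cross union, $\FF$ satisfies $F+F'\neq V_{2l}$ for all $F,F'\in\FF$, and $|\FF|=\qbinom{2l-1}l$. Corollary~\ref{cor:35} then says $\FF=\qbinom Hl$ for a hyperplane $H$, or $\FF$ is the star $\{F\in\qbinom{V_{2l}}l : L\leq F\}$ of a line $L$. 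Both configurations are indeed $2$-cross union and attain the bound.
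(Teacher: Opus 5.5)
Your reformulation is sound. With $u(x)=N/\qbinom{x}{l}-1$ you have $u=q^l/\phi$ for the paper's $\phi$, so Lemma~\ref{lem:16} is exactly $\Phi(x)+\Phi(y)\ge 2\Phi(2l-1)$ with $\Phi=\log u$. Your handling of the empty case and of equality (Lemma~\ref{lem:16} forces $\FF_1=\FF_2$, then Corollary~\ref{cor:35}) matches the paper.

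The gap is the key step. $\Phi$ is \emph{not} concave on $[2l-2,2l)$, and this is not a small-$l$ edge effect. Your own estimates already show this. At $x=2l-2$ you have $w'\le 2l\ln q$ and $u\ge q^{2l}$, so $w'^2/u\le 4l^2(\ln q)^2q^{-2l}$. The $j=l-1$ term alone gives $|w''|\ge(\ln q)^2q^{-(l-1)}$. Hence $|w''|>w'^2/u$ for all large $l$, i.e.\ $\Phi''(2l-2)>0$. It fails for small $l$ as well: for $q=l=2$, $|w''(2)|\approx 1.17$ while $w'(2)^2/u(2)\approx 0.16$.

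The reason is structural. Wherever $\qbinom{x}{l}$ is much smaller than $N$, one has $\Phi\approx\log N-w$, which is convex because $w$ is concave. So $\Phi$ is convex on an initial portion of $[2l-2,2l-1]$. Admissible pairs with $x+y>2(2l-1)$ include $x$ arbitrarily close to $2l-2$ (with $y$ close to $2l$). Therefore the step $\Phi(x)+\Phi(y)\le 2\Phi(\frac{x+y}{2})$ cannot be justified by concavity.

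What you actually need is only the symmetric comparison about $2l-1$: $\phi(2l-1+s)\phi(2l-1-s)\ge 1$ for $0\le s<1$, with equality iff $s=0$. This is the paper's Claim~\ref{claim:37}; monotonicity of $\phi$ then yields both the bound and the strictness (Claim~\ref{claim:38}), exactly as your outline intends. Your fallback $\beta(\qbinom{x}{l})\le\qbinom{4l-2-x}{l}$ is this same inequality rewritten (expand $\beta(a)\le b$ into $(N-a)(N-b)\le q^{2l}ab$). So it correctly locates the crux, but leaves it unproved; ``check via the explicit product formula'' is where all the work lies.

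The paper proves it as follows. Put $Q=q^l$ and $P_\pm=f(2l-1\pm s)/f(2l-1)$. The claim becomes $P_+\ge T_Q(P_-)$, where $T_a(t)=\frac{a+1-t}{1+(a-1)t}$. One checks that $P_-=\prod_{j=l}^{2l-1}r_j$ and $P_+=\prod_{j=l}^{2l-1}T_{q^j}(r_j)$, with $r_j=\frac{q^{j-s}-1}{q^j-1}\in(0,1]$. Two facts then finish it. First, $T_a(t)$ is nondecreasing in $a$, strictly when $t<1$; this gives strictness through the factor $j=2l-1>l$. Second, $T_Q$ is supermultiplicative: $T_Q(t)T_Q(t')\ge T_Q(tt')$ on $[0,1]$. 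Some argument of this kind must replace your concavity step before the proposal is complete.
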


\begin{proof}
Let $u=\|\FF_1\|_l$, $v=\|\FF_2\|_l$. 
We may assume that $u\leq v$.
If $\FF_2=\qbinom{V_{2l}}l$, then the 2-cross union condition forces $\FF_1=\emptyset$.
If $\FF_1=\emptyset$, then $u+v\leq 0+2l<2(2l-1)$.
Thus, we may assume that $l\leq u\leq v<2l$.

Let $Q:=q^l$, $f(x):=\qbinom xl$, $N:=f(2l)$, and $\phi(x):=\frac{Qf(x)}{f(2l)-f(x)}$
for $l\leq x<2l$ and $\phi(2l):=+\infty$.
Since $f$ is strictly increasing and $z\mapsto Qz/(N-z)$ is strictly increasing on
$[0,N)$, the function $\phi$ is strictly increasing on $[l,2l)$.
By Lemma~\ref{lem:16}, we get
$q^{2l}|\FF_1||\FF_2|\leq(N-|\FF_1|)(N-|\FF_2|)$, that is, 
\begin{align}\label{eq:16}
\phi(u)\phi(v)\leq 1. 
\end{align}

\begin{claim}\label{claim:37}
If $0\leq s<1$, then 
$\phi(2l-1+s)\phi(2l-1-s)\geq 1$ with equality holding if and only if $s=0$.
\end{claim}

\begin{proof}
Since $q^l\qbinom{2l-1}l=\qbinom{2l}l-\qbinom{2l-1}l$ we have $\phi(2l-1)=1$.
Thus, if $s=0$, then the desired inequality holds with equality.

Assume that $s>0$. Let $P_\pm:=(Q+1)f(2l-1\pm s)/f(2l)$.
Then, $\phi(2l-1+s)\phi(2l-1-s)\geq1$ is equivalent to
$P_+\geq\frac{Q+1-P_-}{1+(Q-1)P_-}$. To show this inequality, let
$T_a(t):=\frac{a+1-t}{1+(a-1)t}$. 
Then, for $a\geq Q$ and $0\leq t\leq 1$, 
$T_a(t)-T_Q(t)=\frac{(a-Q)(1-t)^2}{(1+(a-1)t)(1+(Q-1)t)}\geq 0$ with equality
holding iff $a=Q$ or $t=1$. Similarly, we have
\[
T_Q(u)T_Q(v)-T_Q(uv)=\frac{Q(1-u)(1-v)(Q+1+(Q-1)uv)}{(1+(Q-1)u)(1+(Q-1)v)(1+(Q-1)uv)}
\geq 0 
\]
with equality holding iff $u=1$ or $v=1$. 
Let $r_j:=\frac{q^{j-s}-1}{q^j-1}$. 
Noting that $Q+1=\prod_{j=l}^{2l-1}\frac{q^{j+1}-1}{q^j-1}$ and
$P_+=\prod_{j=1}^{2l-1}\frac{q^{j+s}-1}{q^j-1}=\prod_{j=1}^{2l-1}T_{q^j}(r_j)$,
we have
\begin{align*}
P_+=\prod_{j=l}^{2l-1}T_{q^j}(r_j)\geq
\prod_{j=l}^{2l-1}T_{Q}(r_j)\geq
T_{Q}\big(\prod_{j=l}^{2l-1}r_j\big)=T_Q(P_{-})=
\frac{Q+1-P_{-}}{1+(Q-1)P_{-}}.
\end{align*}
The first inequality is strict because $q^{2l-1}\neq Q$ and $r_j<1$, where we
used $l\geq 2$ and $s<1$.
\end{proof}

\begin{claim}\label{claim:38}
If $l\leq u\leq v\leq 2l$, and $u+v>2(2l-1)$, then $\phi(u)\phi(v)>1$.
\end{claim}

\begin{proof}
Let $\delta:=\frac{u+v}2-(2l-1)$ and $s:=\frac{v-u}2$.
Then, $v=2l-1+\delta+s$ and $u=2l-1+\delta-s$ with $\delta>0$.
Using $u,v\leq 2l$, we get $\delta-1\leq s\leq 1-\delta$, so $0\leq s<1$, which we will
need to apply Claim~\ref{claim:37}.
Recall that $\phi(x)$ is strictly increasing in $x\in[l,2l)$. It follows
\[
 \phi(u) \phi(v)= \phi(2l-1+\delta+s) \phi(2l-1+\delta-s)
>\phi(2l-1+s) \phi(2l-1-s)\geq 1,
\]
where we used Claim~\ref{claim:37} in the last inequality.
\end{proof}
By \eqref{eq:16} and Claim~\ref{claim:38},
we get $\|\FF_1\|_l+\|\FF_2\|_l=u+v\leq 2(2l-1)$.
Moreover, if $u+v=2(2l-1)$, then, 
writing $u=2l-1-s$ and $v=2l-1+s$ with $0\leq s<1$, it follows from
Claim~\ref{claim:37} that $s=0$, that is, $u=v=2l-1$.
Then, by Lemma~\ref{lem:16}, we have $\FF_1=\FF_2$.
Thus the extremal structures are determined by Corollary~\ref{cor:35}.
This completes the proof of Proposition~\ref{prop:36}.
\end{proof}

\begin{proof}[Proof of Theorem~\ref{thm:7}]
We prove Theorem~\ref{thm:7} by induction on $s:=2l-n$.
The initial case $s=0$ is Proposition~\ref{prop:36}.
Suppose that Theorem~\ref{thm:7} is true for the case $2l-n=s$, and
we consider the case $2l-n=s+1$. 

Let $\FF_1,\FF_2\subset\qbinom{V_n}l$ be 2-cross union families with $2l-n=s+1$. 
If one of the families is empty, then $\|\FF_1\|_l+\|\FF_2\|_l\leq n\leq 2(n-1)$.
So we may assume that both families are non-empty.
Then, by Lemma~\ref{lem:12},
there are 2-cross union families $\HH_1,\HH_2\subset\qbinom{V_{n+1}}l$ such
that $\|\HH_i\|_l\geq\|\FF_i\|_l+1$ for $i=1,2$.
Since $2l-(n+1)=s$, it follows from the induction hypothesis that
$\|\HH_1\|_l+\|\HH_2\|_l\leq2((n+1)-1)$. Thus, 
\begin{align}\label{eq:17}
\|\FF_1\|_l+\|\FF_2\|_l\leq\|\HH_1\|_l+\|\HH_2\|_l-2\leq 2(n-1). 
\end{align}

Now suppose that $\|\FF_1\|_l+\|\FF_2\|_l=2(n-1)$, and we determine the 
structure of these families. 
In this case, $\|\HH_1\|_l+\|\HH_2\|_l=2n$ by \eqref{eq:17}.
Thus, by induction hypothesis for the equality case, 
either (a) there is a hyperplane $H_n\in\qbinom{V_{n+1}}n$ such that
$\HH_1=\HH_2=\qbinom{H_n}l$, or (b) $n+1=2l$ and $\HH_1=\HH_2=\{H\in\qbinom {V_{n+1}}l:L\leq H\}$ for some $L\in\qbinom {V_{n+1}}1$. 
For the case (a), by Lemma~\ref{lem:12}, we get 
$\FF_1=\FF_2=\qbinom{H_{n-1}}l$ for some hyperplane $H_{n-1}\in\qbinom {V_n}{n-1}$. 
Finally, we show that the case (b) cannot happen. Let $n=2l-1$.
If $L\leq V_n$, then $\FF_1=\FF_2=\{F\in\qbinom {V_n}l:L\leq F\}$, and we can choose
two subspaces $F_1\in\FF_1$ and $F_2\in\FF_2$ so that $F_1\cap F_2=L$.
But $\dim (F_1+F_2)=\dim F_1+\dim F_2-\dim L=2l-1=n$, contradicting the 
2-cross union assumption.
If $L\not\leq V_n$, then write $V_{n+1}=L\oplus V_n$. In this case, $\FF_1=\FF_2=\emptyset$, and $\|\FF_1\|_l+\|\FF_2\|_l\neq 2(n-1)$.

Conversely, the listed configurations are 2-cross union and attain equality.
\end{proof}

\subsection*{The role of AI in this paper}
AI assistance (GPT-5.5 and 5.6 Pro) was used as a research 
support tool for exploring possible proof strategies and identifying potentially 
relevant mathematical techniques for proving Proposition~\ref{prop:10}.
The authors formulated the problem, developed the width framework, and reduced the
central difficulty to Proposition~\ref{prop:10} before AI tools were used
AI assistance explored alternative approaches, helped expose the common structural
obstruction in the unsuccessful or partial routes, and identified the non-obvious 
connection with the the junta theorem of Ellis, Kindler, and Lifshitz for 
intersecting families of linear maps (Lemma~\ref{lem:18} from \cite{EKL}). 
After this exploratory stage, all mathematical arguments were independently verified, 
and refined by the authors. The final manuscript was written by the authors. 

\subsection*{Acknowledgments}
NT thanks Wataru Kai for helpful discussions.
NT was supported by JSPS KAKENHI Grant Number JP23K03201.

\section*{Appendix}

Here we include the proof of Proposition~\ref{prop:20}.
We use the following version of the Chernoff bounds: if \(X\)
is a sum of independent Bernoulli random variables with mean \(\mu\),
then $\P[X\geq t]\leq \left(\frac{\mathrm e\mu}{t}\right)^t$ for $t\ge\mu$,
and $\P[X\leq(1-\delta)\mu]\leq\exp(-\delta^2\mu/2)$ for $0<\delta<1$, in particular,
$\P[X\leq\mu/2]\leq\mathrm e^{-\mu/8}$.

\begin{proof}[Proof of Proposition~\ref{prop:20}]
Let $K\geq 0$ be given. Let $A,B$ be $n$-dimensional vector spaces over $\F_q$, and
let $\MM=\Hom(B,A)$. Then $|\MM|=q^{n^2}$.
Suppose that maps $f,g:\MM\to[0,1]$ satisfy $\E f,\E g\geq q^{-n-K}$, and if 
$f(M)g(N)>0$ then $\ker(M-N)\neq\{0\}$.
For a family $J\subset\MM$, let $\one_{J}$ denote the indicator of $J$,
and let $\E[f\one_{J}]:=|\MM|^{-1}\sum_{M\in\MM}f(M)\one_{J}(M)$.

\begin{claim}\label{claim:39}
There are constants $s=s(q)$ and $n_1(K,q)$ satisfying the following. 
For all $n\geq n_1(K,q)$, there are $s$-bounded cross intersecting juntas 
$J_f,J_g\subset\Hom(B,A)$ such that 
$\E[f\one_{J_f}]\geq q^{-4}\E f$ and $\E[g\one_{J_g}]\geq q^{-4}\E g$.
\end{claim}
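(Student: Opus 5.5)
We will convert the weighted pair $(f,g)$ into genuine families by random sampling, apply Lemma~\ref{lem:18} to each sample, and use Lemma~\ref{lem:19} to union-bound over the possible juntas. Fix $R=4$ and let $s:=s_4(q)$, with $C_4$ the constant from Lemma~\ref{lem:18}.

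\textbf{Sampling.} Choose independent random families $\mathcal X\subset\MM$ and $\mathcal Y\subset\MM$ by including each $M$ in $\mathcal X$ independently with probability $f(M)$, and each $N$ in $\mathcal Y$ independently with probability $g(N)$. Every $M\in\mathcal X$ has $f(M)>0$ and every $N\in\mathcal Y$ has $g(N)>0$, so $\ker(M-N)\neq\{0\}$ and $\mathcal X,\mathcal Y$ are cross intersecting. Lemma~\ref{lem:18} is stated for a single intersecting family, so we pass to one. Put $X'=A\oplus\F_q$ and $Y'=B\oplus\F_q$, and embed $\mathcal X$ via $M\mapsto \mathrm{diag}(M,0)$ and $\mathcal Y$ via $N\mapsto\mathrm{diag}(N,1)$. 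Then two elements of $\mathcal X$ differ by a map vanishing on the extra coordinate, so they meet; the same holds within $\mathcal Y$; and across the two, the kernel of the difference is $\ker(M-N)\neq\{0\}$. Thus $\mathcal X\cup\mathcal Y$ is intersecting in dimension $n+1$.

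\textbf{Junta approximation and restriction.} Lemma~\ref{lem:18} with $R=4$ gives an intersecting $s$-bounded junta $J'$ missing at most $C_4q^{-4(n+1)}q^{(n+1)^2}$ elements of $\mathcal X\cup\mathcal Y$. Restrict $J'$ to the two affine slices $\{\mathrm{diag}(\cdot,0)\}$ and $\{\mathrm{diag}(\cdot,1)\}$. A cell restricted to an affine slice is again a cell, or empty, of no larger complexity in $\Hom(B,A)$. This yields $s$-bounded juntas $J_f,J_g\subset\MM$, and they are cross intersecting because $J'$ is intersecting. The slice has size $q^{n^2}$, so the number of elements of $\mathcal X$ outside $J_f$ is at most $C_4q^{-4n+O(1)}q^{n^2}\cdot q^{2n+1}/q^{\,n}$. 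This is $q^{n^2-2n+O(1)}$, which is far below $\E|\mathcal X|=q^{n^2}\E f\geq q^{n^2-n-K}$ once $n$ is large; the same bound holds for $\mathcal Y$.

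\textbf{Union bound over juntas.} This is the main obstacle: $J_f$ is random and depends on $\mathcal X$, while we need $\E[f\one_{J_f}]$ to be large. Call a junta $J$ \emph{bad for $f$} if $\E[f\one_J]<q^{-4}\E f$. For a fixed bad $J$, the Chernoff upper-tail bound shows that
\[
\P\bigl[|\mathcal X\cap J|\geq \tfrac12|\mathcal X|\bigr]\leq \exp\bigl(-\Omega(q^{n^2-n-K})\bigr).
\]
Concretely, $|\mathcal X\cap J|$ has mean below $q^{-4}\mu$ where $\mu=\E|\mathcal X|$, and we compare it with $\mu/4$; the lower-tail bound gives $|\mathcal X|\geq\mu/2$ with overwhelming probability. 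By Lemma~\ref{lem:19} there are only $q^{O(n)}$ juntas, so with probability close to $1$ no bad junta captures half of $\mathcal X$, and likewise for $\mathcal Y$. On the other hand, $J_f$ captures all but $q^{n^2-2n+O(1)}=o(\mu)$ elements of $\mathcal X$. Hence $J_f$ is not bad, that is, $\E[f\one_{J_f}]\geq q^{-4}\E f$, and the same argument gives $J_g$. Fixing one outcome in which all these events hold gives the claim with $n_1(K,q)$ chosen large enough for every $o(1)$ and $\exp$ estimate above. The constant $q^{-4}$ is generous, and $1/4$ would suffice.
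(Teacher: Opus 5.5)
Your proposal is correct and follows the paper's proof essentially step for step. It uses the same embeddings $M\mapsto\mathrm{diag}(M,0)$ and $N\mapsto\mathrm{diag}(N,1)$, the same independent sampling with Lemma~\ref{lem:18} at $R=4$, the pull-back of cells along the two slices, and Chernoff bounds plus a union bound via Lemma~\ref{lem:19}. You union-bound over juntas in $\Hom(B,A)$ rather than in $\MM^+$, which makes no real difference.

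One small arithmetic slip: the expression $C_4q^{-4n+O(1)}q^{n^2}\cdot q^{2n+1}/q^n$ carries a spurious factor $q^{-n}$. The bound you actually use, $|\mathcal X\setminus J_f|\le C_4q^{-4(n+1)}q^{(n+1)^2}=C_4q^{n^2-2n-3}$, is correct and suffices.
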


\begin{proof}
Let $y$ and $z$ be new non-zero vectors to construct $(n+1)$-dimensional spaces
$B^+:=B\oplus\langle y\rangle$ and $A^+:=A\oplus\langle z\rangle$. 
Let $\MM^+=\Hom(B^+,A^+)$.
For $c=0,1$, define $\iota_c:\MM\to\MM^+$ by
$\iota_c(M)(b+\lambda y)=Mb+c\lambda z$,
where $M\in\MM$, $b\in B$, and $\lambda\in\F_q$, so
$\iota_0$ is a linear embedding and $\iota_1$ is its affine translate. 
Then, the family
\[
  \{\iota_0(M):f(M)>0\}\cup\{\iota_1(N):g(N)>0\}
\]
is intersecting. 
Indeed, for the same $c$, we have $\iota_c(M)(y)=\iota_c(N)(y)$ for all $M,N\in\MM$,
and for the different $c$, we have $\iota_0(M)(b)=\iota_1(N)(b)$, that is,
$(M-N)b=0$, for $M,N\in\MM$ with $f(M)g(N)>0$ and $b\in\ker(M-N)\setminus\{0\}$.

For each $M\in\MM$, sample $\iota_0(M)$ with probability $f(M)$ independently, and let 
$\HH_0$ be the resulting family. Similarly, we get $\HH_1$ by sampling each
$\iota_1(N)$ with probability $g(N)$, and let $\HH:=\HH_0\cup\HH_1\subset\MM^+$ 
be the resulting random family. Note that $\HH$ is intersecting.

To apply Lemma~\ref{lem:18} with $R=4$ and $m=n+1$ to $\HH$, 
let $\Delta_n:=C_4q^{-4(n+1)}|\MM^+|=C_4q^{n^2-2n-3}$, and let $s:=s_4(q)$.
Then, there is an intersecting $s$-bounded junta $J\subset\MM^+$ such that 
$|\HH\setminus J|\leq\Delta_n$. By choosing $n\geq K+O_q(1)$, we may assume that
$\Delta_n<\frac1{10}q^{n^2-n-K}$.
We need to choose $\HH$ (and $J$) with additional conditions discussed below.

Let $\JJ^+_s$ denote the set of $s$-bounded juntas in $\MM^+$.
Then, $|\JJ^+_s|\leq q^{a n}$ for a constant 
$a=a(q)$ by Lemma~\ref{lem:19}.

Let $F:=\E|\HH_0|=|\MM|\E f\geq q^{n^2-n-K}$.
By the Chernoff bound, 
$\P[|\HH_0|<F/2]\leq\mathrm e^{-F/8}$.
Let $\rho:=3\mathrm e/16$ so that $3\mathrm e q^{-4}\leq\rho<1$.
For a fixed $J\in\JJ^+_s$, let  $X_J:=|\HH_0\cap J|$ be the sum of Bernoulli 
random variables with mean $F_J:=\E X_J=\sum_{\iota_0(M)\in J}f(M)$.
If \(F_J<q^{-4}F\), then $\mathrm e F_J/F<\mathrm e q^{-4}\leq\rho/3$ and 
the Chernoff bound yields
\begin{align*}
  \P\left[X_J\geq F/3\right]\leq
  \left(\frac{\mathrm e F_J}{F/3}\right)^{F/3}<\rho^{F/3}=\mathrm e^{-cF},
\end{align*}
where \(c=-\frac13\log\rho>0\).
Since $F\geq q^{n^2-n-K}$ and since there are only $q^{an}$ candidate juntas,
a union bound shows that if $n\geq n_1(K,q)$, then there is a realization $\HH=\HH_0\cup\HH_1$ for which 
$|\HH_0|\geq F/2$, and $F_J\geq q^{-4}F$ if $X_J\geq F/3$ for every $J\in\JJ^+_s$.
A union bound still works if we moreover require $\HH$ to satisfy
$|\HH_1|\geq G/2$, and $G_J\geq q^{-4}G$ if $Y_J\geq G/3$ for every $J\in\JJ^+_s$,
where $G:=\E|\HH_1|\geq q^{n^2-n-K}$, $Y_J:=|\HH_1\cap J|$ and $G_J:=\E Y_J$.

Fix such a realization $\HH$ and apply Lemma \ref{lem:18}. 
It provides an intersecting junta $J\in\JJ^+_s$ with 
$|\HH\setminus J|\le\Delta_n$. Since $\Delta_n\leq F/10$, it follows that
$X_J=|\HH_0\cap J|\geq|\HH_0|-|\HH\setminus J|\geq F/2-F/10=2F/5>F/3$, and so
$F_J\geq q^{-4}F$. In the same way, $G_J\geq q^{-4}G$.

The inverse image under either $\iota_c$ of a cell in
$\MM^+$ is either empty or a cell in $\MM$ of no larger complexity.  
To see this, write \(p:B^+\to B\) for the natural projection and
\(j:A\to A^+\) for the natural inclusion, so that
$\iota_0(M)=jMp$ and $\iota_0(M)^*=p^*M^*j^*$.
Let $\CC=\CC(S,T;\Pi,\pi)\subset\MM^+$ be a cell.  
If \(\iota_0^{-1}(\CC)\) is non-empty, choose
\(M_0\in\iota_0^{-1}(\CC)\), and put
$S_0=p(S)$ and $T_0=j^*(T)$.
Then
$\iota_0^{-1}(\CC)  =  \CC\bigl(S_0,T_0;M_0|_{S_0},M_0^*|_{T_0}\bigr)$.
Moreover, $\dim S_0+\dim T_0\leq \dim S+\dim T$.
Since inverse images commute with unions, 
$J_f:=\iota_0^{-1}(J)=\{M\in\MM:\iota_0(M)\in J\}$
is again an \(s\)-bounded junta if $J\in\JJ^+_s$.  
We also have $\E[f\one_{J_f}]=|\MM|^{-1}F_J\geq q^{-4}\E f$.
Since $\iota_1$ is an affine translate of $\iota_0$, we can argue similarly: 
if $J\in\JJ^+_s$ then $J_g:=\iota_1^{-1}(J)$ is an $s$-bounded junta with
$\E[g\one_{J_g}]\geq q^{-4}\E g$.

Finally we check that $J_f$ and $J_g$ are cross intersecting.
Let $M\in J_f$ and $N\in J_g$. Then, 
$\iota_0(M),\iota_1(N)\in J$ agree on some non-zero $b+\lambda y$.
Then, $Mb=Nb+\lambda z$ yields $\lambda=0$, and so $b\neq 0$. Thus $Mb=Nb$ with 
$b\neq 0$, that is, $\ker(M-N)\neq\{0\}$, as needed.
\end{proof}

\begin{claim}\label{claim:40}
For fixed constants $s\ge1$ and $L\geq 0$, there exists $n_2(s,L,q)$ such that, for all 
$n\geq n_2(s,L,q)$, the following holds.
If $J_f,J_g\subset\Hom(B,A)$ are $s$-bounded cross intersecting juntas with
$\E[f\one_{J_f}]\geq q^{-n-L}$ and $\E[g\one_{J_g}]\geq q^{-n-L}$,
then there are complexity $1$ cells $\CC_f\subset J_f$ and
$\CC_g\subset J_g$ such that
$\E[f\mid M\in\CC_f]\geq q^{-L_0}$ and $\E[g\mid N\in\CC_g]  \geq q^{-L_0}$, 
where $L_0=L+\lceil\log_q(2s)\rceil$.
\end{claim}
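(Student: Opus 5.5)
The plan is a pigeonhole over the at most $s$ cells of each junta. Complexity-$0$ cells are excluded by the cross intersecting property, and cells of complexity at least $2$ are too small to carry much mass. So a complexity-$1$ cell must carry a $1/(2s)$ share of the mass. I treat $J_f$; the argument for $J_g$ is symmetric.

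\emph{Step 1: no complexity-$0$ cell.} A complexity $0$ cell is all of $\Hom(B,A)$. Since $\E[g\one_{J_g}]\geq q^{-n-L}>0$, the junta $J_g$ contains some $N$. If $J_f$ contained a complexity $0$ cell, then $J_f=\Hom(B,A)$ and we could take $M:=N+I$, where $I$ is any invertible map $B\to A$ (it exists since $\dim A=\dim B=n$). Then $\ker(M-N)=\ker I=\{0\}$, which contradicts cross intersection. Hence every cell of $J_f$ has complexity between $1$ and $s$.

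\emph{Step 2: cells of complexity at least $2$ are negligible.} A non-empty cell with $\dim S=a$, $\dim T=b$ has relative measure $q^{(n-a)(n-b)-n^2}=q^{-(a+b)n+ab}$. If $2\leq a+b\leq s$, this is at most $q^{-2n+s^2}$. Since $f\leq 1$, the total contribution of these cells (at most $s$ of them) to $\E[f\one_{J_f}]$ is at most $s\,q^{-2n+s^2}$. Once $n\geq n_2(s,L,q)$, this is at most $\frac12 q^{-n-L}$.

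\emph{Step 3: pigeonhole.} By the union bound over cells, the complexity-$1$ cells of $J_f$ together satisfy $\sum_{\CC}\E[f\one_{\CC}]\geq \E[f\one_{J_f}]-\frac12q^{-n-L}\geq\frac12 q^{-n-L}$. There are at most $s$ such cells, so one of them, $\CC_f$, has $\E[f\one_{\CC_f}]\geq q^{-n-L}/(2s)$. A non-empty complexity $1$ cell has relative measure exactly $q^{-n}$, because $(a,b)=(1,0)$ or $(0,1)$. Therefore
\[
\E[f\mid M\in\CC_f]=q^{n}\E[f\one_{\CC_f}]\geq \frac{q^{-L}}{2s}\geq q^{-L-\lceil\log_q(2s)\rceil}=q^{-L_0}.
\]
The same argument with the roles of $f$ and $g$ exchanged gives $\CC_g$.

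I expect no serious obstacle; the only points needing care are the following. The exclusion of complexity $0$ cells must use the cross intersecting property together with non-emptiness of the other junta. The threshold $n_2$ must absorb the constant $s\,q^{s^2}$ against $q^{-n-L}$. The $q^{-n}$ measure must be computed for the $(1,0)$ and $(0,1)$ cells alike.
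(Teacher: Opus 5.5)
Your proposal is correct and follows essentially the same route as the paper. Both arguments exclude complexity-$0$ cells via an invertible difference $M-N$ (you also note explicitly that $J_g\neq\emptyset$, which the paper leaves implicit), bound the mass on cells of complexity at least $2$, and pigeonhole over the at most $s$ complexity-$1$ cells, each of relative measure $q^{-n}$. The only cosmetic difference is in that middle bound: you use the relative bound $q^{-2n+s^2}$, while the paper uses the sharper absolute bound $q^{(n-1)^2}$, and either suffices for large $n$.
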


We note that $s=s_4(q)$ depends on $q$ only (see Lemma~\ref{lem:18}), and so we can 
write $n_2(L,q)$ for $n_2(s,L,q)$.

If $J$ is an $s$-bounded junta and $J=\CC_1\cup\cdots\cup\CC_s$, then the cells
$\CC_i$ are not necessarily disjoint. Thus, for $f:\MM\to[0,1]$, it follows that
$\sum_{M\in\bigcup_i\CC_i}f(M)\leq\sum_i\sum_{M\in\CC_i}f(M)$. In this sense,
$f(M)$ satisfies subadditivity.

\begin{proof}
If $J_f$ contains a complexity 0 cell, then $J_f=\MM$. Choose any $N\in J_g$
and any invertible map $T\in\MM$, and let $M:=N+T\in J_f$. In this case, $T=M-N$
and $\ker(M-N)=\{0\}$, contradicting the assumption that $J_f$ and $J_g$ are
cross intersecting. This excludes a complexity 0 cell from $J_f$, and 
symmetrically from $J_g$.
Thus, every cell in $J_f\cup J_g$ is complexity at least $1$.

Recall that a non-empty cell
$\CC(S,T;\Pi,\pi)$ with $a=\dim S$ and $b=\dim T$ satisfies
$|\CC(S,T;\Pi,\pi)|=q^{(n-a)(n-b)}$.
In particular, a complexity $1$ cell
has size $q^{n(n-1)}$, whereas every cell of complexity at least $2$ has
size at most $q^{(n-1)^2}$.

Write $J_f=\UU_1\cup\UU_2$, where $\UU_1$ is a union of complexity $1$ cells
and $\UU_2$ is a union of complexity at least $2$ cells. The total number of
cells contained in $\UU_1\cup\UU_2$ is at most $s$.
Since $f(M)\leq1$ for all $M\in J_f$, it follows 
$\sum_{M\in J_f}f(M)=|\MM|\,\E[f\one_{J_f}]\geq q^{n^2-n-L}$.
Using the subadditivity, 
we have $\sum_{M\in\UU_2}f(M)\leq sq^{(n-1)^2}<\frac12q^{n^2-n-L}$, where we used
$n\geq n_2(L,q)$ for the second inequality.
Then $\sum_{M\in\UU_1}f(M)>\frac12q^{n^2-n-L}$, and, by pigeonhole, there is a cell
$\CC_f\subset\UU_1$ such that $\sum_{M\in \CC_f}f(M)\geq\frac1{2s}q^{n^2-n-L}$. 
Since $|\CC_f|=q^{n(n-1)}$, we have 
$\E[f\mid M\in\CC_f]=|\CC_f|^{-1}\sum_{M\in \CC_f}f(M)\geq\frac1{2s}q^{-L}\geq q^{-L_0}$. The same argument applies to $J_g$ to get $\CC_g$.
\end{proof}

\begin{claim}\label{claim:41}
Assume $n\ge2$.  If two non-empty complexity $1$ cells
$\CC_f,\CC_g\subset\Hom(B,A)$ are cross intersecting, then either there
are $0\ne u\in B$ and $v\in A$ such that
$\CC_f=\CC_g=\{M:Mu=v\}$,
or there are $0\ne\phi\in A^*$ and $\psi\in B^*$ such that
$\CC_f=\CC_g=\{M:\phi\circ M=\psi\}$.
\end{claim}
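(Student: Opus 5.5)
We first identify the shape of a complexity $1$ cell. Then we show that two cross intersecting cells of this kind must coincide, by exhibiting an invertible difference whenever they do not.

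\emph{Step 1: the two types of cell.} A cell $\CC(S,T;\Pi,\pi)$ has complexity $\dim S+\dim T=1$, so either $\dim S=1,\ T=0$ or $S=0,\ \dim T=1$.
\begin{itemize}
\item In the first case $S=\langle u\rangle$ with $u\neq0$. Putting $v:=\Pi(u)$, the cell is exactly $\{M:Mu=v\}$.
\item In the second case $T=\langle\phi\rangle$ with $0\neq\phi\in A^*$. Putting $\psi:=\pi(\phi)$, the cell is exactly $\{M:M^*\phi=\psi\}=\{M:\phi\circ M=\psi\}$.
\end{itemize}
So it suffices to consider $\CC_f,\CC_g$ each of one of these two forms.

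\emph{Step 2: the contrapositive criterion.} We argue by contrapositive. In every configuration other than the two listed in the claim, we produce $M\in\CC_f$ and $N\in\CC_g$ with $M-N=X$ invertible, so that $\ker(M-N)=\{0\}$. This contradicts cross intersection. The procedure is always the same: fix a suitable invertible $X$, find $N\in\CC_g$ with $N+X\in\CC_f$, and set $M:=N+X$.

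\emph{Step 3: the case analysis.}
\begin{itemize}
\item \emph{Both cells of the first type}, $\CC_f=\{Mu=v\}$ and $\CC_g=\{Nu'=v'\}$.
  \begin{itemize}
  \item If $u,u'$ are linearly independent, take any invertible $X$. Define $N$ on a basis extending $\{u,u'\}$ by $Nu'=v'$ and $Nu=v-Xu$. Then $N\in\CC_g$ and $M:=N+X$ satisfies $Mu=v$.
  \item Otherwise we may rescale so that $u'=u$, with $v'$ replaced by a multiple. If then $v\neq v'$, pick an invertible $X$ with $Xu=v-v'\neq0$ (possible since $u\neq0$), and let $N\in\CC_g$ be arbitrary; $M:=N+X$ satisfies $Mu=v$.
  \end{itemize}
  Hence cross intersection forces $\langle u\rangle=\langle u'\rangle$ with compatible values, so $\CC_f=\CC_g$.
\item \emph{Both cells of the second type.} This is the dual statement. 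Apply the previous case to $M\mapsto M^*\in\Hom(A^*,B^*)$, using the identity $\ker(M-N)=\{0\}\iff\ker(M^*-N^*)=\{0\}$ (both sides mean $M-N$ is invertible).
\item \emph{Mixed types}, say $\CC_f=\{Mu=v\}$ and $\CC_g=\{\phi\circ N=\psi\}$ (the reverse order is symmetric). We show this never happens.
  \begin{itemize}
  \item \emph{Choice of $X$.} Let $c:=\phi(v)-\psi(u)$. We need a nonzero $w\in A$ with $\phi(w)=c$. If $c\neq0$ this exists because $\phi\neq0$. If $c=0$, it exists because $\dim\ker\phi=n-1\geq1$; this is where $n\geq2$ is used. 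Since $u\neq0$, extend $u\mapsto w$ to an invertible $X$, so $Xu=w$.
  \item \emph{Choice of $N$.} Set $w':=v-w$; then $\phi(w')=\phi(v)-c=\psi(u)$. Build $N$ on a basis $u,e_2,\dots,e_n$ of $B$ by $Nu=w'$ and $Ne_j=a_j$, where each $a_j$ satisfies $\phi(a_j)=\psi(e_j)$ (solvable since $\phi\neq0$). Then $\phi\circ N=\psi$ on the whole basis, so $N\in\CC_g$.
  \item \emph{Conclusion.} $M:=N+X$ satisfies $Mu=w'+w=v$, so $M\in\CC_f$, while $M-N=X$ is invertible.
  \end{itemize}
  This contradicts cross intersection, so mixed pairs cannot occur.
\end{itemize}

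\emph{Main obstacle.} The only delicate point is the mixed case. There one must choose $X$ so that the two constraints on $N$, namely $Nu=v-Xu$ and $\phi\circ N=\psi$, are simultaneously satisfiable. The consistency condition is $\phi(Xu)=\phi(v)-\psi(u)$, and meeting it with an invertible $X$ is exactly what requires $n\geq2$ in the case $\phi(v)=\psi(u)$. The remaining cases are routine basis extensions.
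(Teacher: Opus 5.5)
Your proposal is correct and follows essentially the same route as the paper. It classifies complexity-$1$ cells as value cells $\{M:Mu=v\}$ or dual cells $\{M:\phi\circ M=\psi\}$, and in every non-listed configuration exhibits $M\in\CC_f$, $N\in\CC_g$ with $M-N$ invertible; $n\geq 2$ enters exactly to find $0\neq a\in\ker\phi$ in the mixed case. The only cosmetic difference is that you get the dual--dual case by transposing to $\Hom(A^*,B^*)$, whereas the paper repeats the value-cell argument directly for dual cells.
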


\begin{proof}
Every complexity $1$ cell is either a value cell
$\VV(u,v)=\{M:Mu=v\}$ $(0\ne u\in B)$, 
or dual cell
$\DD(\phi,\psi)=\{M:\phi\circ M=\psi\}$ $(0\ne\phi\in A^*)$.
For two cells $\CC_1,\CC_2$, set $\CC_1-\CC_2=\{M-N:M\in\CC_1,\ N\in\CC_2\}$.
They are cross intersecting if and only if $\CC_1-\CC_2$ contains no invertible map.

Consider first two value cells.  If $u,u'$ are linearly independent,
then $\VV(u,v)-\VV(u',v')=\MM$.
Indeed, for any given $T\in\MM$, choose $N\in \VV(u',v')$ with $Nu'=v'$ and 
$Nu=v-Tu$, and put $M:=N+T$. Then, $Mu=v$, that is, $M\in \VV(u,v)$, and so 
$T\in \VV(u,v)-\VV(u',v')$. But $T$ can be
invertible, which contradicts the cross intersecting condition.
Thus, we may assume that $u'=\alpha u$ for some $0\neq \alpha\in\F_q$. Then
\[
 \VV(u,v)-\VV(u',v') =\{M-N:Mu=v,\,N(\alpha u)=v'\} =\{T:Tu=v-\alpha^{-1}v'\}.
\]
This set contains an invertible map if and only if $w:=v-\alpha^{-1}v'\ne0$.  
Indeed, if $w\neq 0$, then let $u,u_2,\ldots,u_n$ be a basis of $B$,
and let $w,w_2,\ldots,w_n$ be a basis of $A$. Then we get an invertible map
$T$ in the set by setting $Tu=w$ and $Tu_i=w_i$ for $2\leq i\leq n$. 
Thus cross intersecting condition forces
$v'=\alpha v$, which means that the two value cells are identical.

Consider next two dual cells.  If $\phi,\phi'$ are linearly
independent, then $\DD(\phi,\psi)-\DD(\phi',\psi')=\MM$.
Indeed, for any given $T\in\MM$, choose $N\in \DD(\phi',\psi')$ with 
$\phi'\circ N=\psi'$ and $\phi\circ N=\psi-\phi\circ T$ (this is possible because
$(\phi,\phi'):A\to\F_q^2$ is surjective),
and put $M:=N+T$. Then, $\phi\circ M=\psi$, that is, $M\in \DD(\phi,\psi)$, and so 
$T\in \DD(\phi,\psi)-\DD(\phi',\psi')$. But $T$ can be
invertible, which contradicts the cross intersecting condition.
Thus, we may assume that $\phi'=\alpha\phi$ for some $0\neq \alpha\in\F_q$. 
Then $\DD(\phi,\psi)-\DD(\phi',\psi')=\{T:\phi\circ T=\psi-\alpha^{-1}\psi'\}$, which contains an invertible $T$ if and only if $\psi-\alpha^{-1}\psi'\neq 0$.
Thus cross intersecting condition forces
$\psi'=\alpha\psi$, which means that the two dual cells are identical.

Finally, consider two mixed cells. In this case, we show that 
$\VV(u,v)-\DD(\phi,\psi)$ contains an invertible map, and so this case cannot happen
by the cross intersecting condition. Let $\alpha:=\phi(v)-\psi(u)\in\F_q$. Then there is
$0\neq a\in A$ such that $\phi(a)=\alpha$. Indeed, if $\alpha\neq 0$, then choose
$y\in A$ with $\phi(y)=1$ and set $a=\alpha y$. If $\alpha=0$, then choose
$0\neq a\in\ker \phi$, here we need $n\geq 2$. 
Next choose an invertible map $T\in\MM$ so that $Tu=a$. 
Then $\phi(Tu)=\phi(v)-\psi(u)$.
We shall show that we can find $M\in\VV(u,v)$ and $N\in\DD(\phi,\psi)$ such that 
$T=M-N$.
Let $w:=v-Tu$ so that $\phi(w)=\phi(v)-\phi(Tu)=\psi(u)$.
Write $B=\langle u\rangle\oplus C$, where $C$ is an $(n-1)$-dimensional complement
subspace. For $\lambda\in\F_q$ and $c\in C$, define $N\in\MM$ by
$N(\lambda u+ c):=\lambda w+\psi(c)y$, where $y\in A$ satisfies $\phi(y)=1$. Then, 
\[
\phi(N(\lambda u+ c))=\phi(\lambda w+\psi(c)y)=\lambda\phi(w)+\psi(c)\phi(y)
=\lambda\psi(u)+\psi(c)=\psi(\lambda u+c),
\]
that is, $\phi\circ N=\psi$, and $N\in\DD(\phi,\psi)$.
Define $M:=N+T$. Then $Mu=Nu+Tu=w+Tu=v$, and $M\in\VV(u,v)$. 
Consequently, $T=M-N\in\VV(u,v)-\DD(\phi,\psi)$ is invertible, as desired.
\end{proof}

Let $L=K+4$ and $n_0(K,q)=\max\{2,n_1(K,q),n_2(L,q)\}$.
Suppose that $n\geq n_0(K,q)$.
By Claim~\ref{claim:39}, there are $s$-bounded cross intersecting
juntas $J_f,J_g\subset\MM$ with $\E[f\one_{J_f}]\geq q^{-4}\E f$ and
$\E[g\one_{J_g}]\geq q^{-4}\E g$. Since $\E f\geq q^{-n-K}$, we get
$\E[f\one_{J_f}]\geq q^{-n-L}$, and similarly, $\E[g\one_{J_g}]\geq q^{-n-L}$.
Then, by Claim~\ref{claim:40}, there are complexity $1$ cells $\CC_f\subset J_f$
and $\CC_g\subset J_g$ with $\E[f\mid M\in\CC_f]\geq q^{-L_0}$ and
$\E[g\mid N\in\CC_g]\geq q^{-L_0}$.
Finally, by Claim~\ref{claim:41}, either there are $0\ne u\in B$ and $v\in A$ such that
$\CC_f=\CC_g=\{M:Mu=v\}$, or there are $0\ne\phi\in A^*$ and $\psi\in B^*$ such that
$\CC_f=\CC_g=\{M:\phi\circ M=\psi\}$.
\end{proof}

\end{document}